\documentclass{amsart}
\usepackage{amssymb}
\usepackage{graphicx}
\usepackage[shortlabels]{enumitem}
\usepackage{multirow}
\usepackage{amsmath,color}
\usepackage{hyperref}
\usepackage{url}
\usepackage{longtable}

\theoremstyle{plain}
\newtheorem{theorem}{Theorem}[section]
\newtheorem{prop}[theorem]{Proposition}
\newtheorem{proposition}[theorem]{Proposition}
\newtheorem{lemma}[theorem]{Lemma}
\newtheorem{corollary}[theorem]{Corollary}

\theoremstyle{definition}

\theoremstyle{remark}
\newtheorem{remark}[theorem]{Remark}

{


\newcommand{\inprod}[1]{\langle#1\rangle}

\newcommand{\Z}{\mathbb{Z}}
\newcommand{\regSub}{\Lambda}
\newcommand{\SRG}{\operatorname{srg}}
\newcommand{\GDD}{\operatorname{gdd}}
\newcommand{\AR}{\operatorname{ar}}

\begin{document}
\title{Biregular graphs with three eigenvalues}

\author[X.-M. Cheng]{ Xi-Ming Cheng}
\author[A. L. Gavrilyuk]{ Alexander L. Gavrilyuk$^\diamondsuit$ }
\thanks{$^\diamondsuit$ Part of the work was done while A.L.G. was visiting Tohoku University as a JSPS
Postdoctoral Fellow. His work (e.g., Theorem 7.3) was also partially supported by the Russian Science Foundation (grant 14-11-00061)}
\author[G. R. W. Greaves]{ Gary R. W. Greaves$^\clubsuit$ }
\thanks{$^\clubsuit$G.R.W.G. was supported by JSPS KAKENHI; 
grant number: 26$\cdot$03903}
\author[J. H. Koolen]{ Jack H. Koolen$^\spadesuit$ }
\thanks{$^\spadesuit$J.H.K. is partially supported by the `100 talents' program of the Chinese Academy of Sciences,
and by the National Natural Science Foundation of China (No. 11471009).}
\thanks{\copyright 2016. This manuscript version is made available under the CC-BY-NC-ND 4.0 license http://creativecommons.org/licenses/by-nc-nd/4.0/ }


\address{School of Mathematical Sciences, 
University of Science and Technology of China,
Hefei, Anhui, 230026, P.R. China}
\email{xmcheng@mail.ustc.edu.cn}

\address{Department of Algebra and Topology,
N.N. Krasovskii Institute of Mathematics and Mechanics UB RAS,
 Yekaterinburg, Russia}
\email{alexander.gavriliouk@gmail.com}

\address{Research Center for Pure and Applied Mathematics,
Graduate School of Information Sciences, 
Tohoku University, Sendai 980-8579, Japan}
\email{grwgrvs@gmail.com}

\address{Wen-Tsun Wu Key Laboratory of CAS, School of Mathematical Sciences, 
University of Science and Technology of China,
Hefei, Anhui, 230026, P.R. China}
\email{koolen@ustc.edu.cn}

\dedicatory{Dedicated to Misha Klin on the occasion of his retirement.}

\subjclass[2010]{05E30, 05C50}

\keywords{three distinct eigenvalues, strongly regular graphs, biregular graphs, nonregular}

\begin{abstract}
	We consider nonregular graphs having precisely three distinct eigenvalues.
	The focus is mainly on the case of graphs having two distinct valencies and our results include constructions of new examples, structure theorems, valency constraints, and a classification of certain special families of such graphs.
	We also present a new example of a graph with three valencies and three eigenvalues of which there are currently only finitely many known examples.
\end{abstract}

\maketitle

\section{Introduction}

In the late 1990s, as a generalisation of strongly regular graphs, attention was brought to the study of nonregular graphs whose adjacency matrices have precisely three distinct eigenvalues.
We continue this investigation focussing mainly on graphs having precisely two distinct valencies, so-called \textbf{biregular} graphs.
Muzychuk and Klin~\cite{MK} called such graphs `strongly biregular graphs'. 

An $n$-vertex graph with a vertex of valency $n-1$ is called a \textbf{cone}.
Given a graph $\Gamma$, the \textbf{cone over} $\Gamma$ is the graph formed by adjoining a vertex adjacent to every vertex of $\Gamma$.
Examples of families of strongly biregular graphs are complete bipartite graphs and cones over strongly regular graphs.
Indeed, a complete bipartite graph $K_{n,m}$ (for $n > m \geqslant 1$) has spectrum $\{ [\sqrt{nm}]^1, [0]^{n+m-2}, [-\sqrt{nm}]^1 \}$.
The following result due to Muzychuk and Klin offers a method for finding strongly biregular cones.

\begin{proposition}[See \cite{MK}]\label{propcone}
Let $\Gamma$ be a (non-complete) strongly regular graph with $n$ vertices, valency $k$, and smallest eigenvalue $\theta_2$. 
Then the cone over $\Gamma$ has three distinct eigenvalues if and only if $\theta_2(k- \theta_2) = -n$.
\end{proposition}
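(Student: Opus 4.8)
The plan is to compute the spectrum of the cone explicitly from the block structure of its adjacency matrix and then count distinct eigenvalues. Write $A$ for the adjacency matrix of $\Gamma$ and $\mathbf{1}$ for the all-ones vector of length $n$. Since $\Gamma$ is strongly regular and non-complete it is connected, so $k$ is a simple eigenvalue affording the eigenvector $\mathbf{1}$, while the remaining eigenvalues $\theta_1>\theta_2$ of $A$ satisfy $k>\theta_1\geqslant 0>\theta_2$ and are afforded by eigenvectors in $\mathbf{1}^{\perp}$. The adjacency matrix of the cone is the $(n+1)\times(n+1)$ symmetric block matrix
\[
\widetilde{A}=\begin{pmatrix} A & \mathbf{1}\\ \mathbf{1}^{\mathsf T} & 0\end{pmatrix}.
\]

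First I would note that every eigenvector $v$ of $A$ with $\mathbf{1}^{\mathsf T}v=0$ gives an eigenvector $(v^{\mathsf T},0)^{\mathsf T}$ of $\widetilde{A}$ for the same eigenvalue, so $\theta_1$ and $\theta_2$ survive as eigenvalues of the cone with their $\Gamma$-multiplicities $m_1$ and $m_2$ (where $m_1+m_2=n-1$). These eigenvectors span an $(n-1)$-dimensional space whose orthogonal complement, spanned by $(\mathbf{1}^{\mathsf T},0)^{\mathsf T}$ and $(\mathbf{0}^{\mathsf T},1)^{\mathsf T}$, is $\widetilde{A}$-invariant by symmetry. Restricting $\widetilde{A}$ to this two-dimensional subspace yields the quotient matrix $\left(\begin{smallmatrix} k & 1\\ n & 0\end{smallmatrix}\right)$, with characteristic polynomial $x^2-kx-n$, so the two remaining eigenvalues of the cone are $\rho_{1,2}=\tfrac12\bigl(k\pm\sqrt{k^2+4n}\bigr)$.

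It then remains to count. The spectrum of the cone is $\{\rho_1,\theta_1,\theta_2,\rho_2\}$ with multiplicities $1,m_1,m_2,1$, summing correctly to $n+1$. From $\rho_1>k>\theta_1\geqslant 0>\theta_2$ and $\rho_1>0>\rho_2$ one sees that $\rho_1$ is strictly the largest value and hence always distinct from the other three; since $\Gamma$ is non-complete, $\theta_1\neq\theta_2$, so the cone already has at least three distinct eigenvalues. Thus the cone has exactly three if and only if $\{\theta_1,\theta_2,\rho_2\}$ yields only two distinct values, and as $\rho_2<0\leqslant\theta_1$ excludes $\rho_2=\theta_1$, this happens if and only if $\rho_2=\theta_2$. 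Finally I would clear the radical in $\tfrac12\bigl(k-\sqrt{k^2+4n}\bigr)=\theta_2$, which is legitimate because $k-2\theta_2>0$, to obtain the equivalent condition $\theta_2(k-\theta_2)=-n$.

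The step I expect to be the main obstacle is not the algebra but pinning down the sign and ordering inequalities $k>\theta_1\geqslant 0>\theta_2$ and $\rho_1>k$, $\rho_2<0$ precisely enough to guarantee that $\rho_1$ sits alone at the top and that $\rho_2=\theta_1$ cannot occur; these are exactly the facts that force $\rho_2=\theta_2$ to be the unique mechanism producing three eigenvalues. Once they are secured, the equivalence drops out of the single equation $\rho_2=\theta_2$.
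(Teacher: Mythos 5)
Your proof is correct and complete: the decomposition of the cone's spectrum into the eigenvalues of $\Gamma$ on $\mathbf{1}^{\perp}$ plus the two roots of $x^2-kx-n$ from the quotient matrix $\left(\begin{smallmatrix} k & 1\\ n & 0\end{smallmatrix}\right)$, together with the ordering $\rho_1>k>\theta_1\geqslant 0>\theta_2$ and $\rho_2<0$, is exactly the standard argument, and the paper itself gives no proof here but simply cites Muzychuk and Klin. The only points worth making explicit in a written version are the justification that $\theta_1\geqslant 0$ for a connected non-complete strongly regular graph and the converse direction of the radical-clearing step (if $\theta_2(k-\theta_2)=-n$ then $\theta_2$ is a root of $x^2-kx-n$ and, being negative, must equal $\rho_2$), both of which you have essentially supplied.
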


There are infinitely many strongly regular graphs satisfying the assumption of the proposition and so there are infinitely many cones over strongly regular graphs having three distinct eigenvalues \cite{brme,MK}.
As well as giving some sporadic examples, using symmetric and affine designs, Van Dam~\cite{Dam3ev} exhibited a couple of infinite families of strongly biregular graphs that are neither cones nor complete bipartite graphs.

So far we only have a finite list of graphs with three valencies and three distinct eigenvalues and no examples of graphs with precisely three distinct eigenvalues with more than three valencies.
Below we contribute a new graph to the list of graphs with three eigenvalues and three valencies whilst also showing the nonexistence of an, a priori, putative graph with three eigenvalues and four valencies (see Section~\ref{sec:computational_constructions}).

There exist some partial classifications of graphs having three distinct eigenvalues in the following senses.
Van Dam~\cite{Dam3ev} classified all such graphs having smallest eigenvalues at least $-2$ and also classified all such graphs on at most $29$ vertices.
We show the existence of some graphs on $30$ vertices (see Theorem~\ref{thm:30}) whose existence was an open question in \cite{Dam3ev}.
Chuang and Omidi~\cite{Chuang09} classified those graphs whose spectral radius is less than $8$.
We also contribute a classification of strongly biregular graphs whose second largest eigenvalue is at most $1$ (see Section~\ref{sec:sec1}).

In this paper we further develop the theory of graphs with precisely three distinct eigenvalues.
We begin with Section~\ref{sec:prelim} where we present our preliminaries, classify connected graphs that have three distinct eigenvalues and a disconnected complement.
In Section~\ref{sec:bounds_for_graphs_with_three_eigenvalues} we give some bounds for various quantities related to graphs with three distinct eigenvalues.
In Section~\ref{sec:bireg} we focus on non-bipartite strongly biregular graphs and, in particular, we show that there are only finitely many such graphs when the second largest eigenvalue is bounded.
Here we also define what we mean by \emph{feasible valency-arrays and spectra}.
We show a relationship with strongly biregular graphs and certain designs in Section~\ref{sec:graphs_from_designs}.
Section~\ref{sec:comp} is concerned with graphs with three distinct eigenvalues whose complements also have precisely three distinct eigenvalues and biregular graphs with three eigenvalues whose switchings (with respect to the valency partition) also have three distinct eigenvalues.
In Section~\ref{sec:computational_constructions}, using the star complement method, we show the existence of new graphs having precisely three distinct eigenvalues.
We also study the structure of strongly biregular graphs whose two smaller eigenvalues sum to $-1$.
We also establish some nonexistence results in Section~\ref{sub:some_nonexistence_results} and state some open problems in Section~\ref{sec:open_problems}.
Furthermore, as an appendix, we provide a table of feasible valency-arrays and spectra for biregular graphs having precisely three distinct eigenvalues.

\section{Graphs with three distinct eigenvalues}
\label{sec:prelim}

In this section we develop some basic theory for graphs with three distinct eigenvalues.
We assume the reader is familiar with the basic definitions and techniques of algebraic graph theory; one can use Godsil and Royle's book~\cite{God01} as a reference.

Let $\Gamma = (V,E)$ be a connected graph on $n$ vertices.
Recall that the adjacency matrix $A$ of $\Gamma$ is an $n \times n$ matrix whose $(i,j)$th entry, $A_{i,j}$, is $1$ if the $i$th vertex of $\Gamma$ is adjacent to the $j$th vertex of $\Gamma$ and $0$ otherwise.
We write $x \sim y$ if the vertices $x$ and $y$ are adjacent and $x \not \sim y$ if they are not adjacent.
By the eigenvalues of $\Gamma$ we mean the eigenvalues of $A$.
Examining the trace of $A$ and $A^2$ gives two basic facts about the eigenvalues $\eta_i$ ($1 \leqslant i \leqslant n$):
\begin{equation}\label{sums} 
	\sum_{i=1}^{n} \eta_i = 0
	\quad \text{ and } \quad 
	\sum_{i=1}^{n} \eta_i^2 = 2e,
\end{equation}
where $e$ is the number of edges of $\Gamma$.

Assume that $\Gamma$ has precisely three distinct eigenvalues $\theta_0 > \theta_1 > \theta_2$. 
Then $\Gamma$ has diameter two and since such a graph cannot be complete, it follows by interlacing that $\theta_1 \geqslant 0$ and $\theta_2 \leqslant -\sqrt{2}$.

By the Perron-Frobenius Theorem (see, for example, \cite{God01}), $\theta_0$ has multiplicity 1 and the entries of any eigenvector for $\theta_0$ have constant sign.
This implies that there exists a positive eigenvector ${\mathbf \alpha}$ for the eigenvalue $\theta_0$ such that 
$$
(A-\theta_1 I)(A - \theta_2I) = {\mathbf \alpha} {\mathbf \alpha}^T.
$$
Throughout the paper we exclusively reserve the symbol $\alpha$ to correspond to this eigenvector.
For a vertex $x$, denote the entry of $\alpha$ corresponding to $x$ by $\alpha_x$. 
This implies that if a vertex $x$ has valency $d_x$, then $d_x = \alpha_x^2 - \theta_1 \theta_2$.
Let $x$ and $y$ be distinct vertices of $\Gamma$.
We write $\nu_{x,y}$ for the number of common neighbours of $x$ and $y$.
By the above formulae we have
\[
	\nu_{x,y} = (\theta_1 + \theta_2) A_{x,y} + \alpha_x \alpha_y.
\]
Assume that $\Gamma$ has $s$ distinct valencies $k_1, \dots, k_s$. 
We will often abuse our above notation by writing $\alpha_i$ to mean $\alpha_x$ for some vertex $x$ having valency $k_i$.
We may also write $\nu_{i,j}$ to mean $\nu_{x,y}$ where $d_x = k_i$ and $d_y = k_j$.
Throughout the paper, we will assume this notation to be standard.

We write $m_i$ for the multiplicity of eigenvalue $\theta_i$ of $\Gamma$. 
If $\Gamma$ has $n$ vertices then, since $1+m_1 + m_2 = n$ and $\theta_0 + m_1 \theta_1 + m_2 \theta_2=0$, we have
\begin{equation}\label{mult} 
	m_1 = -\frac{ (n-1)\theta_2 + \theta_0}{\theta_1 -\theta_2}
	\quad \text{ and } \quad 
	m_2 = \frac{(n-1) \theta_1 + \theta_0}{\theta_1 - \theta_2}.
\end{equation}

We call a graph \textbf{strongly regular} if it is a connected regular graph with constants $\lambda$ and $\mu$ such that every pair of vertices has $\lambda$ or $\mu$ common neighbours if they are adjacent or non-adjacent, respectively.
We use the notation $\operatorname{srg}(n,k,\lambda,\mu)$ to denote such graphs with valency $k$ and $n$ vertices.
If $\Gamma$ is regular then, since it has precisely three distinct eigenvalues, it is well-known that $\Gamma$ must be strongly regular \cite{Dam3ev}.
In this paper we focus on the less well studied case of when $\Gamma$ is nonregular.

Recall we assumed that $\Gamma$ has $s$ distinct valencies $k_1, \dots, k_s$.  
We write $V_i := \{ v \in V(\Gamma) \; | \; d_v = k_i \}$ and $n_i := |V_i|$ for $i \in \{1,\dots,s\}$.
Clearly the subsets $V_i$ partition the vertex set of $\Gamma$.
We call this partition the \textbf{valency partition} of $\Gamma$.
Let $\pi = \{\pi_1,\dots,\pi_s\}$ be a partition of the vertices of $\Gamma$.
For each vertex $x$ in $\pi_i$, write $d_{x}^{(j)}$ for the number of neighbours of $x$ in $\pi_j$.
Then we write $b_{ij} = 1/|\pi_i|\sum_{x \in \pi_i} d_{x}^{(j)}$ for the average number of neighbours in $\pi_j$ of vertices in $\pi_i$.
The matrix $B_\pi := (b_{ij})$ is called the \textbf{quotient matrix} of $\pi$ and $\pi$ is called \textbf{equitable} if for all $i$ and $j$, we have $d_{x}^{(j)} = b_{ij}$ for each $x \in \pi_i$.
We will use repeatedly properties of the quotient matrices of partitions of the vertex set of a graph and we refer the reader to Godsil and Royles' book~\cite[Chapter 9]{God01} for the necessary background on equitable partitions and interlacing.

For fixed $\theta_0 > \theta_1 > \theta_2$, define the set $\mathcal G(\theta_0,\theta_1,\theta_2)$ of connected nonregular graphs having precisely three distinct eigenvalues $\theta_0$, $\theta_1$, and $\theta_2$.

Among graphs with three eigenvalues, complete bipartite graphs are distinguished in the following way.

\begin{theorem}[Proposition 2 \cite{Dam3ev}]\label{thm:integralsr}
	Let $\Gamma$ be a graph in $\mathcal G(\theta_0,\theta_1,\theta_2)$ where $\theta_0$ is not an integer.
	Then $\Gamma$ is a complete bipartite graph.
\end{theorem}

It was shown by Smith~\cite{smith70} that if the second largest eigenvalue of a connected graph $\Gamma$ is at most $0$ then $\Gamma$ is a complete $r$-partite graph with parts of size $p_1,\dots,p_r$, denoted $K_{p_1,\dots,p_r}$.
We will see below that complete bipartite graphs are the only nonregular multipartite graphs with precisely three distinct eigenvalues.

\begin{theorem}\label{thm:disconn}
	Let $\Gamma$ be a graph in $\mathcal G(\theta_0,\theta_1,\theta_2)$.
	If the complement of $\Gamma$ is disconnected, then $\Gamma$ is a cone or $\Gamma$ is complete bipartite.
\end{theorem}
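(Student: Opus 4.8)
The plan is to exploit the fact that a disconnected complement forces $\Gamma$ to be a \emph{join}. Writing $W_1,\dots,W_t$ ($t\ge 2$) for the vertex sets of the connected components of $\overline{\Gamma}$, every vertex of $W_i$ is adjacent to every vertex outside $W_i$, so $\Gamma$ is the join of the induced subgraphs $\Gamma_i:=\Gamma[W_i]$. First I would observe that each $\Gamma_i$ is regular. Indeed, any vector supported on $W_i$ that sums to zero is an eigenvector of $A$ with the same eigenvalue it has for $A(\Gamma_i)$, because each vertex outside $W_i$ is adjacent to all of $W_i$ and hence sees total weight $0$. Such eigenvalues cannot equal $\theta_0$ (which is simple with a positive eigenvector), so the non-principal eigenvalues of each $\Gamma_i$ lie in $\{\theta_1,\theta_2\}$; reading the diagonal of $(A(\Gamma_i)-\theta_1 I)(A(\Gamma_i)-\theta_2 I)=c_iJ$ then shows $\Gamma_i$ is regular, say of degree $r_i$. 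Consequently every vertex of $W_i$ has the same valency $k_i=r_i+n-n_i$ (where $n_i=|W_i|$) and the same entry $\alpha_i$ of $\alpha$.

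Next I would pin down the number of parts. For $x\in W_i$ and $y\in W_j$ with $i\ne j$ the vertices are adjacent, and counting common neighbours part-by-part gives $\nu_{x,y}=r_i+r_j+(n-n_i-n_j)$. Comparing this with $\nu_{x,y}=(\theta_1+\theta_2)+\alpha_i\alpha_j$ and substituting $\alpha_i^2=k_i+\theta_1\theta_2$ yields the clean relation $\alpha_i^2-\alpha_i\alpha_j+\alpha_j^2=C$ for a constant $C$ independent of the pair $i\ne j$. For any three parts this forces $(\alpha_j-\alpha_k)(\alpha_j+\alpha_k-\alpha_i)=0$ together with its cyclic versions; since the $\alpha_i$ are positive, these are incompatible unless all three values coincide. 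Hence if $t\ge 3$ all $\alpha_i$ are equal, making $\Gamma$ regular, a contradiction. Therefore $t=2$ and $\Gamma=\Gamma_1\vee\Gamma_2$ is biregular.

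With two parts I would then classify each $\Gamma_i$ on $n_i$ vertices. If some $n_i=1$ that part is a single vertex adjacent to everything, so $\Gamma$ is a cone and we are done; thus assume $n_1,n_2\ge 2$. A regular graph on at least two vertices whose non-principal eigenvalues lie in $\{\theta_1,\theta_2\}$ (with $\theta_1\ge 0$ and $\theta_2\le-\sqrt2$) is either the empty graph, which forces $\theta_1=0$, or a connected non-complete strongly regular graph with eigenvalues $r_i>\theta_1>\theta_2$; every remaining possibility (a complete graph, or a disconnected graph containing an edge) would force $-1\in\{\theta_1,\theta_2\}$, which is impossible. Moreover, since $W_i$ is a component of $\overline{\Gamma}$, the complement $\overline{\Gamma_i}$ is connected, so a strongly regular $\Gamma_i$ has $\mu_i<r_i$, i.e.\ $\theta_1\theta_2<0$ and hence $\theta_1>0$. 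This gives a dichotomy: if $\theta_1=0$ both parts must be empty and $\Gamma=K_{n_1,n_2}$ is complete bipartite; if $\theta_1>0$ the empty option is impossible and both parts are nontrivial strongly regular graphs.

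The main obstacle is ruling out this last configuration. Here I would use the equitable quotient $B=\begin{pmatrix} r_1 & n_2\\ n_1 & r_2\end{pmatrix}$, whose eigenvalues are eigenvalues of $\Gamma$ and whose Perron value is $\theta_0$. Since $r_i<n_i$ forces $\det B=r_1r_2-n_1n_2<0$, the smaller eigenvalue of $B$ is negative, hence must equal $\theta_2$ (the only negative element of $\{\theta_0,\theta_1,\theta_2\}$); expanding $\det(B-\theta_2 I)=0$ gives $n_1n_2=(r_1-\theta_2)(r_2-\theta_2)$. Feeding in the strongly regular size identity $n_i=\dfrac{(r_i-\theta_1)(r_i-\theta_2)}{r_i+\theta_1\theta_2}$ and simplifying reduces this to $r_1+r_2=\theta_1(1-\theta_2)$. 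On the other hand, connectivity of each $\Gamma_i$ gives $\mu_i=r_i+\theta_1\theta_2\ge 1$, so $r_1+r_2\ge 2(1-\theta_1\theta_2)$; comparing the two forces $\theta_1(1+\theta_2)\ge 2$, which is impossible since $\theta_1>0$ and $\theta_2<-1$. This contradiction finishes the case $\theta_1>0$, leaving only cones and complete bipartite graphs. I expect the two delicate points to be justifying that the smaller quotient eigenvalue is exactly $\theta_2$ (rather than merely negative) and carrying out the elimination that produces $r_1+r_2=\theta_1(1-\theta_2)$.
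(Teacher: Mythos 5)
Your argument breaks at its very first substantive step: the claim that every component $W_i$ of $\overline{\Gamma}$ induces a regular subgraph $\Gamma_i$ of $\Gamma$. The lifting argument you give applies only to eigenvectors of $A(\Gamma_i)$ that are orthogonal to the all-ones vector; if $\Gamma_i$ is not regular, then $\mathbf{j}$ need not be an eigenvector of $A(\Gamma_i)$, the zero-sum eigenvectors need not account for the whole non-principal spectrum of $\Gamma_i$, and you cannot conclude that the non-principal eigenvalues of $\Gamma_i$ lie in $\{\theta_1,\theta_2\}$ — nor that $(A(\Gamma_i)-\theta_1 I)(A(\Gamma_i)-\theta_2 I)$ equals $c_iJ$ rather than a general rank-one matrix $c_i\beta\beta^{\top}$ built from the Perron vector of $\Gamma_i$. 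Both inferences presuppose the regularity you are trying to establish, so the step is circular. Worse, the claim itself is false: the constructed Bridges--Mena cones with three eigenvalues and \emph{three} valencies (see the remark following Corollary~\ref{cor:cone3}) have a disconnected complement with exactly two components, the apex and the set of all remaining vertices, and the latter component carries two distinct valencies, hence induces a non-regular subgraph. These graphs satisfy the theorem's conclusion (they are cones) but refute your intermediate lemma, so no correct proof can begin by showing that every component is regular.

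The paper sidesteps this by first running the $\alpha$-analysis that you postpone: the identity $\alpha_x^2-\alpha_x\alpha_y+\alpha_y^2=n+\theta_1+\theta_2+2\theta_1\theta_2$ holds for \emph{every} pair $x,y$ in different components, not just for representatives, and from it one deduces that $\Gamma$ has at most three valencies, that $\overline{\Gamma}$ has exactly two components, and that only the component formed by the vertices of \emph{largest} valency is guaranteed to be regular; the final contradiction is then extracted from that one component alone (via the $\overline{\mu}$-parameter of its complement), with no regularity assumption on the other side. The remainder of your argument — the join decomposition, the reduction to $t=2$, the classification of a regular part as empty, complete, or strongly regular, and the closing computation pitting $r_1+r_2=\theta_1(1-\theta_2)$ against $\mu_i\geqslant 1$ — is sound \emph{given} regularity of both parts and would make a pleasant alternative ending, but as written the proof needs the paper's Claims 1 and 2 (or an equivalent) inserted first, and the cone case must be split off before, not after, any assertion that the parts are regular.
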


\begin{proof}
Let $V$ be the vertex set of $\Gamma$ and suppose that the complement $\overline{\Gamma}$ has at least $2$ connected components.
\paragraph{\textbf{Claim 1}} $\Gamma$ has at most three valencies. 
\label{par:claim1}

Suppose that $x$ and $y$ are vertices in different components of $\overline{\Gamma}$.
Then we must have $x \sim y$ (in $\Gamma$) and, since the other $n-2$ vertices must be adjacent to $x$ or $y$, we have $\nu_{x,y} = d_x+d_y-n$.
Hence we can write 
\begin{equation}
	\label{eqn:quad}
	\alpha_x^2-\alpha_x\alpha_y+\alpha_y^2 = n + \theta_1+\theta_2 + 2\theta_1\theta_2.
\end{equation}
If $d_x = d_y$ then, by Eq.~\eqref{eqn:quad}, $\alpha_x = \alpha_y = \sqrt{n + \theta_1+\theta_2 + 2\theta_1\theta_2}$.
Furthermore, since any other vertex $z$ of $\Gamma$ must be adjacent to $x$ or $y$, by Eq.~\eqref{eqn:quad}, we would have $\alpha_z = \alpha_y = \alpha_x$.
But this cannot happen since $\Gamma$ is not regular.
Hence, vertices in different components cannot have the same valency.

From Eq.~\eqref{eqn:quad} observe that if $\alpha_x$ is fixed then, there are only two possible values for $\alpha_y$, say $\alpha$ and $\alpha'$, satisfying $\alpha + \alpha' = \alpha_x$.
Thus, for any vertex $x$ in one connected component $C$ of $\overline \Gamma$, there can be at most two vertices outside of $C$ having distinct valencies.
Moreover, in the case where there are vertices $y$ and $z$ outside of $C$ having two distinct valencies, we have $\alpha_x = \alpha_y + \alpha_z$.
Hence the valency of any other vertex of $\Gamma$ must be $d_x$, $d_y$, or $d_z$, and the claim is established.
One can also see that, since for each vertex $v$ each $\alpha_v$ is positive, $x$ has the largest of the valencies.
Therefore we have also established the following claim (the claim is trivial if $\Gamma$ has only two valencies).

\paragraph{\textbf{Claim 2}} The vertices of the largest (or larger) valency in $\Gamma$ induce a regular connected component of $\overline \Gamma$. 
\label{par:claim2}

If $y$ and $z$ were in different connected components, then we could simultaneously write $\alpha_x = \alpha_y + \alpha_z$, $\alpha_y = \alpha_z + \alpha_x$, and $\alpha_z = \alpha_x + \alpha_y$ which is clearly impossible.
Thus we can also deduce the following.

\paragraph{\textbf{Claim 3}} $\overline \Gamma$ has precisely two connected components. 





Let $C$ be a regular connected component of $\overline{\Gamma}$ and let $\bf{v}$ be an eigenvector of $C$ with non-trivial eigenvalue $\theta$ (i.e., $\bf{v}$ is orthogonal to the `all ones' vector).
Let ${\bf w} \in {\mathbb R}^V$ be defined by $w_x = v_x$ if $x \in V(C)$ and $0$ otherwise.
Then ${\bf w}$ is an eigenvector of $\Gamma$ with eigenvalue $-\theta-1$. 
This means that $C$ has at most three distinct eigenvalues and is either a complete graph $K_t$ with $t \geqslant 1$ or a strongly regular graph.
First suppose $C = K_t$.
If $t=1$ then $\Gamma$ has a vertex of valency $n-1$, i.e., $\Gamma$ is a cone.
Otherwise $t \geqslant 2$, in which case $\Gamma$ has an eigenvalue $0$.
Hence $\Gamma$ is complete multipartite and, since $\overline{\Gamma}$ has only two connected components, $\Gamma$ must be complete bipartite.

It remains to consider the case when each regular connected component of $\overline{\Gamma}$ is a strongly regular graph.
Let $k_1 > k_2$ be the two largest valencies of $\Gamma$ and let $\regSub$ be the regular subgraph of $\Gamma$ induced on $V_1$ with $n_1$ vertices and valency $k_{11}$.
By above, we assume that $\regSub$ is strongly regular and, by interlacing, it must have eigenvalues, $k_{11}$, $\theta_1$, and $\theta_2$.
Let vertices $x$ and $y$ have respective valencies $d_x = k_1$ and $d_y = k_2$.
Note that $x \sim y$ since they must be in different connected components of $\overline \Gamma$.
Hence we have 
\begin{equation}
	\label{eqn:comps1}
	k_2 - \nu_{x,y} - 1 = \alpha_2^2 - \theta_1 \theta_2 - \alpha_1\alpha_2 - \theta_1 -\theta_2 -1=
	 \alpha_2 (\alpha_2 - \alpha_1) - (\theta_1 +1)(\theta_2 + 1).
\end{equation}
On the other hand, since the complement $\overline \regSub$ of $\regSub$ is strongly regular with valency $n_1 - k_{11} - 1$ and non-trivial eigenvalues $-\theta_1-1$ and $-\theta_2-1$, we can also write
\begin{equation}
	\label{eqn:comps2}
	k_2 - \nu_{x,y} -1 = n_1 - k_{11} -1 = \overline{\mu} - (\theta_1 + 1)(\theta_2+1),
\end{equation}
where $\overline{\mu}$ is the number of common neighbours of two non-adjacent vertices in the component $\overline{\regSub}$ of $\overline{\Gamma}$. 
The second equality follows from a well-known equality~\cite[Thm 1.3.1]{bcn89}.
By comparing Eqs.~\eqref{eqn:comps1} and \eqref{eqn:comps2}, since $\alpha_1 > \alpha_2$, we have $\overline{\mu} < 0$, which is impossible.
\end{proof}

Note that if a bipartite graph is not complete bipartite then its diameter must be at least $3$ and hence it cannot have fewer than $4$ distinct eigenvalues.
The next result follows from this observation and from the above theorem, which shows that the complete bipartite graphs are the only complete multipartite graphs with precisely three distinct eigenvalues. 

\begin{corollary}\label{cor:compbi}
	Let $\Gamma$ be a graph in $\mathcal G(\theta_0,\theta_1,\theta_2)$.
	Then the following are equivalent.
	\begin{enumerate}[(i)]
		\item $\Gamma$ is bipartite;
		\item $\Gamma$ is complete bipartite;
		\item $\theta_1 = 0$.
	\end{enumerate}
\end{corollary}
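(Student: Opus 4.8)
The plan is to prove the three equivalences by establishing the chain $(ii)\Rightarrow(i)$, $(i)\Rightarrow(ii)$, $(ii)\Rightarrow(iii)$, and $(iii)\Rightarrow(ii)$, which together give $(i)\Leftrightarrow(ii)\Leftrightarrow(iii)$. Three of these are short. The implication $(ii)\Rightarrow(i)$ is immediate, since a complete bipartite graph is bipartite. For $(ii)\Rightarrow(iii)$ I would recall from the introduction that $K_{n,m}$ has spectrum $\{[\sqrt{nm}]^1,[0]^{n+m-2},[-\sqrt{nm}]^1\}$, so its middle eigenvalue is $0$; as $\Gamma$ has exactly three distinct eigenvalues this forces $\theta_1=0$. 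For $(i)\Rightarrow(ii)$ I would invoke the observation recorded immediately before the statement: a connected bipartite graph that is not complete bipartite contains a vertex in one part non-adjacent to a vertex in the other, hence two vertices at distance at least $3$, so it has diameter at least $3$ and therefore at least four distinct eigenvalues. Since $\Gamma$ has only three, a bipartite $\Gamma$ must be complete bipartite.

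The substantive direction is $(iii)\Rightarrow(ii)$. Assuming $\theta_1=0$, the second largest eigenvalue of $\Gamma$ is $0\leqslant 0$, so by Smith's theorem~\cite{smith70} $\Gamma$ is a complete multipartite graph $K_{p_1,\dots,p_r}$. Because $\Gamma$ is connected, nonregular, and has three distinct eigenvalues, it is neither complete nor regular, so $r\geqslant 2$ and its complement $\overline{\Gamma}=\bigsqcup_{i=1}^{r}K_{p_i}$ is disconnected, with exactly $r$ connected components. I would then apply Theorem~\ref{thm:disconn} to $\Gamma\in\mathcal G(\theta_0,\theta_1,\theta_2)$: for such a graph with disconnected complement, the argument there (Claim~3 in its proof) shows that $\overline{\Gamma}$ has precisely two connected components. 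Combined with the count above, this forces $r=2$, and hence $\Gamma=K_{p_1,p_2}$ is complete bipartite, giving $(ii)$.

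The main obstacle is precisely this last step. The statement of Theorem~\ref{thm:disconn} only yields that $\Gamma$ is a cone or complete bipartite, so directly it leaves open the possibility of a complete multipartite cone $K_{1,p_2,\dots,p_r}$ with $r\geqslant 3$; such a graph would have complement with at least three components. Ruling this out is exactly what "the complete bipartite graphs are the only complete multipartite graphs with three eigenvalues" requires, and the clean way to close it is the two-component conclusion from the proof of Theorem~\ref{thm:disconn} rather than the weaker cone/complete-bipartite dichotomy in its statement. Once $r=2$ is forced, the remaining implications are routine, so I would expect the write-up to spend essentially all of its effort on justifying this reduction and none on the other three arrows.
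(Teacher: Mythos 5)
Your proof is correct and follows essentially the same route as the paper: the diameter-two observation for (i)$\Rightarrow$(ii), the spectrum of $K_{n,m}$ for (ii)$\Rightarrow$(iii), and Smith's theorem plus Theorem~\ref{thm:disconn} for (iii)$\Rightarrow$(ii). Your remark that the bare statement of Theorem~\ref{thm:disconn} leaves open a complete multipartite cone, so one must appeal to its proof (Claim~3, or equivalently the $K_t$-component case there) to force two parts, is exactly the reading the paper intends when it says the theorem ``shows that the complete bipartite graphs are the only complete multipartite graphs with precisely three distinct eigenvalues.''
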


\begin{remark}
	\label{rem:alphaints}
	We remark that if $\Gamma$ is a non-bipartite graph in $\mathcal G(\theta_0,\theta_1,\theta_2)$ then, by Theorem~\ref{thm:integralsr}, $\theta_0$ is an integer.
	It follows that both $\theta_1+\theta_2$ and $\theta_1\theta_2$ are also integers.
	Hence, for all vertices $x, y \in V(\Gamma)$, it is evident from the equations for $d_x$ and $\nu_{x,y}$ that $\alpha_x^2$ and $\alpha_x\alpha_y$ are integers.
\end{remark}

In the proof of Theorem~\ref{thm:disconn} we also saw that the disconnected complement of $\Gamma$ must have at most three valencies.
Hence we have the following corollary.

\begin{corollary}[cf. \cite{Dam3ev}]\label{cor:cone3}
	Let $\Gamma$ be a cone in $\mathcal G(\theta_0,\theta_1,\theta_2)$.
	Then $\Gamma$ has at most three valencies.
\end{corollary}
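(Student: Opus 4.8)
The plan is to reduce the statement to the disconnected-complement analysis already carried out in the proof of Theorem~\ref{thm:disconn}. The essential observation is that a cone has a vertex $u$ of valency $n-1$; equivalently, in the complement $\overline{\Gamma}$ the vertex $u$ has valency $0$ and is therefore an isolated vertex. Since $\Gamma$ lies in $\mathcal G(\theta_0,\theta_1,\theta_2)$ it has three distinct eigenvalues and so is not the complete graph, whence $n \geqslant 2$ and $\overline{\Gamma}$ has at least one further vertex. Consequently $\overline{\Gamma}$ is disconnected.

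Once the complement is known to be disconnected, I would simply appeal to Claim~1 in the proof of Theorem~\ref{thm:disconn}, which shows precisely that any $\Gamma \in \mathcal G(\theta_0,\theta_1,\theta_2)$ with disconnected complement has at most three valencies. That argument runs through the relation $\alpha_x^2 - \alpha_x\alpha_y + \alpha_y^2 = n + \theta_1 + \theta_2 + 2\theta_1\theta_2$ for vertices $x,y$ lying in distinct components of $\overline{\Gamma}$, together with the fact that fixing $\alpha_x$ leaves at most two admissible values of $\alpha_y$, whose sum is $\alpha_x$. Hence the positive entries $\alpha_v$ take at most three distinct values, and since $d_v = \alpha_v^2 - \theta_1\theta_2$, so do the valencies. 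Nothing in that reasoning uses more than disconnectedness of $\overline{\Gamma}$, so it transfers to the cone case without modification.

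The only point that needs a moment of care — and it is the closest thing to an obstacle — is confirming that a cone genuinely forces $\overline{\Gamma}$ to be disconnected (that is, that the isolated apex is not the whole story), which is settled by excluding the complete graph on the grounds that it has a single eigenvalue rather than three. Beyond this, the corollary is an immediate specialisation of Theorem~\ref{thm:disconn}, and I would present it as such rather than reproving the valency count from scratch.
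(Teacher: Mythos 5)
Your proof is correct and follows the paper's own route: the apex of the cone is an isolated vertex of $\overline{\Gamma}$, so the complement is disconnected, and Claim~1 in the proof of Theorem~\ref{thm:disconn} then bounds the number of valencies by three. The paper derives the corollary in exactly this way, citing that claim directly.
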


\begin{remark}
	The above corollary generalises a result of Bridges and Mena~\cite{brme} who studied cones having distinct eigenvalues $\theta_0$, $\theta_1$, and $-\theta_1$.
	They proved that, except for at most three cones having three valencies, such graphs are cones over strongly regular graphs with parameters $(\lambda^{3}+2\lambda^{2}, \lambda^{2}+\lambda, \lambda,\lambda)$.
	(Only two of these three exceptional cones have been constructed, it is still an open problem to decide the existence of the largest cone.)
\end{remark}
%

\section{Bounds for graphs with three eigenvalues} 
\label{sec:bounds_for_graphs_with_three_eigenvalues}

In this section we give a series of bounds for graphs that have precisely three distinct eigenvalues.
These include bounds for the valencies, bounds for the eigenvalues, and bounds for the entries of the Perron-Frobenius eigenvector $\alpha$. 

\begin{lemma}\label{lem:bound1} 
	Let $\Gamma$ be a non-bipartite graph in $\mathcal G(\theta_0,\theta_1,\theta_2)$ and let $x$ and $y$ be vertices with respective valencies $d_x > d_y$.
	Then the following hold:
	\begin{enumerate}[(i)]
		\item if $x \sim y$, then  $\alpha_x -1 \leqslant (\alpha_x - \alpha_y)\alpha_y \leqslant -(\theta_1 +1)(\theta_2 +1)$ and $\alpha_x \alpha_y \geqslant -\theta_1 - \theta_2$;
		\item if $x \not \sim y$, then  $\alpha_x -1 \leqslant (\alpha_x - \alpha_y)\alpha_y \leqslant -\theta_1 \theta_2 $.
	\end{enumerate}
\end{lemma}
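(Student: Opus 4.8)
The plan is to base everything on two ingredients already available: the valency formula $d_v = \alpha_v^2 - \theta_1\theta_2$ and the common-neighbour identity $\nu_{x,y} = (\theta_1+\theta_2)A_{x,y} + \alpha_x\alpha_y$. First I would record that $d_x > d_y$ forces $\alpha_x > \alpha_y > 0$, since the entries of $\alpha$ are positive and $d_v$ is increasing in $\alpha_v^2$; in particular $(\alpha_x - \alpha_y)\alpha_y > 0$. Next I would pin down the elementary combinatorial bounds on the number $\nu_{x,y}$ of common neighbours: one always has $\nu_{x,y} \geqslant 0$, while counting common neighbours among the neighbours of $y$ gives $\nu_{x,y} \leqslant d_y - 1$ when $x \sim y$ (since $x$, though a neighbour of $y$, is not itself a common neighbour) and $\nu_{x,y} \leqslant d_y$ when $x \not\sim y$.

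The upper bounds then fall out by substituting these inequalities into the common-neighbour identity and eliminating $d_y$ via $d_y = \alpha_y^2 - \theta_1\theta_2$. In case (i), putting $A_{x,y}=1$ into $\nu_{x,y} \leqslant d_y - 1$ rearranges to $(\alpha_x - \alpha_y)\alpha_y \leqslant -\theta_1\theta_2 - (\theta_1+\theta_2) - 1 = -(\theta_1+1)(\theta_2+1)$, while the bound $\nu_{x,y} \geqslant 0$ with $A_{x,y}=1$ reads $\alpha_x\alpha_y \geqslant -\theta_1-\theta_2$, which is the second inequality of (i). In case (ii), putting $A_{x,y}=0$ into $\nu_{x,y} \leqslant d_y$ rearranges to $(\alpha_x - \alpha_y)\alpha_y \leqslant -\theta_1\theta_2$. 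These manipulations are routine and parallel in both parts.

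The lower bound $\alpha_x - 1 \leqslant (\alpha_x - \alpha_y)\alpha_y$, common to (i) and (ii), is the part that requires a genuine idea and is where I expect the main obstacle. The key observation is the factorisation
\[
	(\alpha_x - \alpha_y)\alpha_y - (\alpha_x - 1) = (\alpha_y - 1)(\alpha_x - \alpha_y - 1),
\]
so it suffices to show both factors are nonnegative. Here I would invoke Remark~\ref{rem:alphaints}: since $\Gamma$ is non-bipartite, $\alpha_y^2$, $\alpha_x^2$ and $\alpha_x\alpha_y$ are all integers. Consequently $\alpha_y^2 \geqslant 1$ forces $\alpha_y \geqslant 1$, and $(\alpha_x - \alpha_y)^2 = \alpha_x^2 - 2\alpha_x\alpha_y + \alpha_y^2$ is a positive integer (positive because $\alpha_x \neq \alpha_y$), hence at least $1$, which yields $\alpha_x - \alpha_y \geqslant 1$. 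Both factors are therefore nonnegative and the lower bound follows. The subtlety to flag is that this integrality step genuinely relies on the non-bipartite hypothesis (through the integrality of $\theta_0$, and hence of $\theta_1+\theta_2$ and $\theta_1\theta_2$), so it would not survive for complete bipartite graphs; everything else in the argument is purely formal bookkeeping with the two defining identities.
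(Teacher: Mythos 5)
Your proof is correct and follows essentially the same route as the paper: the upper bounds come from $0 \leqslant \nu_{x,y} \leqslant d_y-1$ (adjacent case) and $\nu_{x,y}\leqslant d_y$ (non-adjacent case) via the identities $d_v=\alpha_v^2-\theta_1\theta_2$ and $\nu_{x,y}=(\theta_1+\theta_2)A_{x,y}+\alpha_x\alpha_y$, while the lower bound uses integrality of $\alpha_x^2$, $\alpha_x\alpha_y$, $\alpha_y^2$ to get $\alpha_y\geqslant 1$ and $\alpha_x\geqslant\alpha_y+1$. You in fact supply slightly more detail than the paper does, making explicit the factorisation $(\alpha_x-\alpha_y)\alpha_y-(\alpha_x-1)=(\alpha_y-1)(\alpha_x-\alpha_y-1)$ that the paper leaves implicit.
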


\begin{proof}
	Since $\alpha_x^2$, $\alpha_{x}\alpha_y$, and $\alpha_y^2$ are all integers (see Remark~\ref{rem:alphaints}) and $\alpha_x > \alpha_y \geqslant 1$, we have $\alpha_x \geqslant \alpha_y + 1$ and hence $\alpha_x - 1 \leqslant (\alpha_x - \alpha_y)\alpha_y$.
	The rest follows from the fact that $0 \leqslant \nu_{x,y} \leqslant d_y - 1$ when $x \sim y$ and $\nu_{x,y} \leqslant d_y$ when $x \not \sim y$.
\end{proof}

Van Dam and Kooij~\cite{DK} showed that the number $n$ of vertices of a connected graph $\Gamma$ with diameter 2 with spectral radius $\rho$ satisfies $n \leqslant \rho^2 +1$ with equality if and only if $\Gamma$ is a Moore graph of diameter 2 or $\Gamma$ is the $K_{1, n-1}$. 
As a consequence we have the following lemma.

\begin{lemma}\label{lem:rhobound}
	Let $\Gamma$ be an $n$-vertex graph in $\mathcal G(\theta_0,\theta_1,\theta_2)$.
	Let $k_{\text{min}}$, $k_{\text{avg}}$, and $k_{\text{max}}$ respectively denote the smallest, average, and largest valency of $\Gamma$.
	Then $k_{\text{min}} < k_{\text{avg}} < \theta_0 < k_{\text{max}}$ and $n \leqslant \theta_0^2 + 1$ with equality if and only if $\Gamma$ is $K_{1,n-1}$.
\end{lemma}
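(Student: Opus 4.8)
The plan is to establish the chain $k_{\text{min}} < k_{\text{avg}} < \theta_0 < k_{\text{max}}$ first, and then to obtain the bound on $n$ by invoking the Van Dam--Kooij inequality quoted above, handling the equality case separately.

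For the inequalities involving $\theta_0$, I would use two standard bounds on the spectral radius of a connected graph. Evaluating the Rayleigh quotient of $A$ at the all-ones vector $\mathbf{1}$ gives $\theta_0 \geqslant \mathbf{1}^{T} A \mathbf{1} / \mathbf{1}^{T}\mathbf{1} = 2e/n = k_{\text{avg}}$, with equality exactly when $\mathbf{1}$ is an eigenvector, i.e.\ when $\Gamma$ is regular. For the upper bound I would take the Perron--Frobenius eigenvector $\alpha$ and a vertex $x$ with $\alpha_x$ maximal; the eigenvalue equation $\theta_0\alpha_x = \sum_{y\sim x}\alpha_y \leqslant d_x\alpha_x \leqslant k_{\text{max}}\alpha_x$ gives $\theta_0 \leqslant k_{\text{max}}$, and equality would force, by connectivity together with the positivity of $\alpha$, that all entries of $\alpha$ are equal and every valency equals $k_{\text{max}}$ --- again meaning $\Gamma$ is regular. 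Since every graph in $\mathcal{G}(\theta_0,\theta_1,\theta_2)$ is nonregular, both inequalities are strict, so $k_{\text{avg}} < \theta_0 < k_{\text{max}}$. Nonregularity also means that some vertex has valency exceeding $k_{\text{min}}$, whence $k_{\text{min}} < k_{\text{avg}}$, completing the chain.

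To bound $n$, I would note that a graph with precisely three distinct eigenvalues has diameter two, so the Van Dam--Kooij inequality applies with $\rho = \theta_0$ and yields $n \leqslant \theta_0^2 + 1$. For the equality statement, their characterisation leaves only two possibilities: a Moore graph of diameter two, or $K_{1,n-1}$. A Moore graph of diameter two is regular, while $\Gamma$ is not, so only $K_{1,n-1}$ survives; and conversely $K_{1,n-1}$ has spectrum $\{[\sqrt{n-1}]^1,[0]^{n-2},[-\sqrt{n-1}]^1\}$, lies in $\mathcal{G}$, and satisfies $n=(\sqrt{n-1})^2+1$, so equality is indeed attained. All of this is routine once Van Dam--Kooij is in hand; the only point needing a little care is the equality analysis, where the Moore-graph alternative must be discarded, and this is achieved solely by the nonregularity built into the definition of $\mathcal{G}(\theta_0,\theta_1,\theta_2)$.
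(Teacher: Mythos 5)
Your proposal is correct and follows essentially the same route as the paper, which states this lemma as a direct consequence of the quoted Van Dam--Kooij bound without writing out the details. The standard spectral-radius inequalities $k_{\text{avg}} \leqslant \theta_0 \leqslant k_{\text{max}}$ (strict by the nonregularity built into $\mathcal G(\theta_0,\theta_1,\theta_2)$) and the elimination of the Moore-graph alternative in the equality case are exactly the routine verifications the authors leave implicit.
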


Now we can establish bounds on the size of the largest valency and the number of vertices.

\begin{proposition}\label{pro:bound2}
	Let $\Gamma$ be a non-bipartite graph in $\mathcal G(\theta_0,\theta_1,\theta_2)$. 
	Let $k_{\text{max}}$ be the maximal valency in $\Gamma$ and let $\ell := \min\{ 1-(\theta_1 +1)(\theta_2 +1), -\theta_1 \theta_2 +1\}$.  
	Then the following hold:
	\begin{enumerate}[(i)]
		\item $k_{\text{max}} \leqslant (1-(\theta_1 +1)(\theta_2 +1))^2 - \theta_1 \theta_2$ ;
		\item if the complement of $\Gamma$ is connected, then $k_{\text{max}} \leqslant \ell^2 - \theta_1 \theta_2 $;
		\item $n \leqslant \max\{ (\ell^2-\theta_1\theta_2 -1)^2 +1,  (1-(\theta_1 +1)(\theta_2 +1))^2 - \theta_1 \theta_2 +1\} $. 
	\end{enumerate}
\end{proposition}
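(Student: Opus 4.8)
The plan is to reduce everything to a bound on $\alpha_{\max}$, the largest entry of $\alpha$, which controls the maximal valency through $k_{\max} = \alpha_{\max}^2 - \theta_1\theta_2$. Since $\alpha$ is constant on each valency class, I only need to produce, for a vertex $x$ of maximal valency, a companion vertex $y$ of strictly smaller valency and feed the pair into Lemma~\ref{lem:bound1}. To establish part (i), first I would observe that some maximal-valency vertex must have a neighbour of smaller valency: if every neighbour of every maximal-valency vertex again had maximal valency, then the set $V_{\max}$ of maximal-valency vertices would receive no edges from its complement, making it a union of connected components; as $\Gamma$ is connected and nonregular this forces $V_{\max} = V$, contradicting nonregularity. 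Taking such an adjacent pair $x \sim y$ with $d_x = k_{\max} > d_y$, the adjacent case of Lemma~\ref{lem:bound1} gives $\alpha_x - 1 \leqslant -(\theta_1+1)(\theta_2+1)$, hence $\alpha_{\max} \leqslant 1 - (\theta_1+1)(\theta_2+1)$ and $k_{\max} = \alpha_{\max}^2 - \theta_1\theta_2 \leqslant (1-(\theta_1+1)(\theta_2+1))^2 - \theta_1\theta_2$.

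For part (ii), the complement $\overline\Gamma$ is assumed connected and, being the complement of a nonregular graph, is itself nonregular, so I would run the identical component argument inside $\overline\Gamma$. The maximal-valency vertices of $\Gamma$ are exactly the minimal-valency vertices of $\overline\Gamma$, and they cannot form a union of components of the connected graph $\overline\Gamma$, so some maximal-valency vertex has an $\overline\Gamma$-neighbour of larger $\overline\Gamma$-valency, that is, a $\Gamma$-non-neighbour $y'$ with $d_{y'} < k_{\max}$. The non-adjacent case of Lemma~\ref{lem:bound1} then yields $\alpha_{\max} \leqslant -\theta_1\theta_2 + 1$. Because $\alpha$ takes the single value $\alpha_{\max}$ on the whole class $V_{\max}$, the two bounds (from the adjacent pair of part (i) and this non-adjacent pair) may be combined even though they come from possibly different vertices, giving $\alpha_{\max} \leqslant \ell$ and therefore $k_{\max} \leqslant \ell^2 - \theta_1\theta_2$.

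Finally, part (iii) splits according to whether $\overline\Gamma$ is connected. If it is, part (ii) applies; using that $\Gamma$ is non-bipartite so that $\theta_0$ is an integer (Remark~\ref{rem:alphaints}) together with the strict inequality $\theta_0 < k_{\max}$ from Lemma~\ref{lem:rhobound}, I get $\theta_0 \leqslant k_{\max} - 1 \leqslant \ell^2 - \theta_1\theta_2 - 1$, and then (both quantities being nonnegative) $n \leqslant \theta_0^2 + 1 \leqslant (\ell^2 - \theta_1\theta_2 - 1)^2 + 1$. If instead $\overline\Gamma$ is disconnected, Theorem~\ref{thm:disconn} forces $\Gamma$ to be a cone, since it cannot be complete bipartite, so $k_{\max} = n-1$; combined with part (i) this gives $n = k_{\max} + 1 \leqslant (1-(\theta_1+1)(\theta_2+1))^2 - \theta_1\theta_2 + 1$. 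Taking the maximum over the two cases yields the stated bound.

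I expect the main subtlety to lie in part (ii): the two inequalities on $\alpha_{\max}$ are extracted from different vertices, so the argument depends crucially on $\alpha$ being constant across the maximal-valency class and on the component argument being symmetric enough to run unchanged in $\overline\Gamma$. A secondary point requiring care is the passage from $\theta_0 < k_{\max}$ to $\theta_0 \leqslant k_{\max} - 1$ in part (iii), which relies on the integrality of $\theta_0$ guaranteed by non-bipartiteness and would fail for a complete bipartite graph.
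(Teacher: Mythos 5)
Your proposal is correct and follows essentially the same route as the paper: both parts (i) and (ii) are obtained by feeding an adjacent (resp.\ non-adjacent) pair consisting of a maximal-valency vertex and a smaller-valency vertex into Lemma~\ref{lem:bound1}, with the existence of the non-adjacent pair guaranteed by connectedness of $\overline\Gamma$, and (iii) follows from $n\leqslant\theta_0^2+1\leqslant(k_{\text{max}}-1)^2+1$ together with Theorem~\ref{thm:disconn} to split into the cone and non-cone cases. You merely make explicit two points the paper leaves implicit — the connectivity argument producing a maximal-valency vertex with a smaller-valency neighbour, and the use of integrality of $\theta_0$ to sharpen $\theta_0<k_{\text{max}}$ to $\theta_0\leqslant k_{\text{max}}-1$ — both of which are sound.
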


\begin{proof} 
	Let $x$ be a vertex with valency $k_{\text{max}}$, having a neighbour $y$ with $d_y < k_{\text{max}}$. 
	Now (i) follows from Lemma \ref{lem:bound1}.
	If all vertices $x$ with valency $k_{\text{max}}$ do not have a non-neighbour $y$ with $d_y < k_{\text{max}}$, then the complement of $\Gamma$ is not connected.
	So we may assume that there is a vertex $x$ with valency $k_{\text{max}}$ having a non-neighbour $y$ with $d_y < k_{\text{max}}$. 
	Hence (ii) follows from Lemma \ref{lem:bound1}.
	For (iii), we have by Lemma \ref{lem:rhobound} that $n \leqslant \theta_0^2 +1 \leqslant (k_{\text{max}}-1)^2 +1$.
	So the result follows by Theorem~\ref{thm:disconn} and (ii) if $k_{\text{max}} \neq n-1$. 
	Otherwise, if $k_{\text{max}} = n-1$ then $n =  k_{\text{max}} +1 \leqslant (1-(\theta_1 +1)(\theta_2 +1))^2 - \theta_1 \theta_2 +1$.
\end{proof}

The proof of the next result is essentially the same as the proof of Proposition~3 in \cite{Dam3ev}.

\begin{lemma}\label{lem:vanDamprop3}
	Let $\Gamma$ be an $n$-vertex non-bipartite graph in $\mathcal G(\theta_0,\theta_1,\theta_2)$.
	Assume that $\theta_1$ and $\theta_2$ have the same multiplicity, $m =(n-1)/2$.
	Then there exists a positive integer $t$ such that $\theta_1 = (-1+\sqrt{4t+1})/2$, $\theta_2= (-1 - \sqrt{4t+1})/2$, and $\theta_0 = (n-1)/2$ and $4t +3 \leqslant n$.
\end{lemma}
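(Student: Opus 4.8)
The plan is to read everything off the two trace identities in \eqref{sums}, the multiplicity formulas \eqref{mult}, and the integrality observations of Remark~\ref{rem:alphaints}, and then to close the argument with the valency inequalities of Lemma~\ref{lem:rhobound}. First I would record the consequences of the equal-multiplicity hypothesis. Since $1 + m_1 + m_2 = n$ and $m_1 = m_2 = m$, we get $n = 2m+1$ (in particular $n$ is odd), and the identity $\theta_0 + m_1\theta_1 + m_2\theta_2 = 0$ collapses to
\[
	\theta_0 = -m(\theta_1 + \theta_2).
\]
Because $\Gamma$ is non-bipartite, Remark~\ref{rem:alphaints} guarantees that $\theta_0$ is a positive integer and that $p := \theta_1 + \theta_2$ and $q := \theta_1\theta_2$ are integers. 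Since $\theta_0 > 0$ and $m > 0$, the displayed equation already forces $p \leqslant -1$. I would note that no case distinction between rational and irrational $\theta_1,\theta_2$ is needed, since the whole argument uses only $p,q,\theta_0 \in \mathbb{Z}$.

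The only genuinely non-bookkeeping step is to upgrade $p \leqslant -1$ to $p = -1$, and this is where the external input enters. Here I would invoke Lemma~\ref{lem:rhobound}, which gives $\theta_0 < k_{\text{max}}$, together with the trivial bound $k_{\text{max}} \leqslant n - 1 = 2m$. Substituting $\theta_0 = -mp$ gives $-mp < 2m$, that is $p > -2$; combined with $p \leqslant -1$ and integrality this pins down $p = -1$. Consequently $\theta_1 + \theta_2 = -1$ and $\theta_0 = m = (n-1)/2$, which is already one of the claimed conclusions.

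With $p = -1$ fixed, I would set $t := -q = -\theta_1\theta_2$. Since $\theta_1 > 0$ (non-bipartite, by Corollary~\ref{cor:compbi}) and $\theta_2 < 0$, we have $q < 0$, so $t$ is a positive integer. Then $\theta_1,\theta_2$ are precisely the two roots of $x^2 + x - t = 0$, and solving this quadratic while respecting $\theta_1 > \theta_2$ yields exactly $\theta_1 = (-1 + \sqrt{4t+1})/2$ and $\theta_2 = (-1 - \sqrt{4t+1})/2$.

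It remains to prove $4t + 3 \leqslant n$. Since $n = 2\theta_0 + 1$, this is equivalent to $\theta_0 \geqslant 2t + 1$, which I would extract from the other half of Lemma~\ref{lem:rhobound}, namely $k_{\text{avg}} < \theta_0$. Using the second identity in \eqref{sums} in the form $2e = \theta_0^2 + m(\theta_1^2 + \theta_2^2)$, together with $\theta_1^2 + \theta_2^2 = p^2 - 2q = 1 + 2t$ and $m = \theta_0$, gives $2e = \theta_0(\theta_0 + 1 + 2t)$. The bound $\theta_0 > k_{\text{avg}} = 2e/n$ then reads $\theta_0(2\theta_0 + 1) > \theta_0(\theta_0 + 1 + 2t)$, which simplifies to $\theta_0 > 2t$; integrality of $\theta_0$ and $t$ upgrades this to $\theta_0 \geqslant 2t + 1$, whence $n = 2\theta_0 + 1 \geqslant 4t + 3$. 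The essential content is thus that the two inequalities $\theta_0 < k_{\text{max}}$ and $k_{\text{avg}} < \theta_0$ of Lemma~\ref{lem:rhobound} supply exactly the two estimates the statement needs; everything else is algebraic manipulation of the trace and multiplicity relations.
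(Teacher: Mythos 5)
Your proof is correct, and it is essentially the argument the paper itself delegates to Van Dam's Proposition~3: use the trace identities and equal multiplicities to get $\theta_0=-m(\theta_1+\theta_2)$, pin down $\theta_1+\theta_2=-1$ from $\theta_0<k_{\text{max}}\leqslant n-1$, and derive $n\geqslant 4t+3$ from $k_{\text{avg}}<\theta_0$ via $\operatorname{tr}A^2$. Your observation that Remark~\ref{rem:alphaints} makes the rational/irrational case distinction unnecessary is exactly the point of the paper's remark following the lemma.
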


Van Dam's \cite[Proposition~3]{Dam3ev} assumes that $\theta_1$ and $\theta_2$ are irrational and his proof uses that fact that $\theta_1$ and $\theta_2$ have the same multiplicity.
Our Lemma~\ref{lem:vanDamprop3} merely starts with assuming that $\theta_1$ and $\theta_2$ have the same multiplicity, hence the proof follows in the same way.

\begin{proposition}\label{nonintegral}
	Let $\Gamma$ be an $n$-vertex non-bipartite graph in $\mathcal G(\theta_0,\theta_1,\theta_2)$.
	Assume that $\theta_1$ and $\theta_2$ have the same multiplicity.
	Let $t$ be defined as in Lemma~\ref{lem:vanDamprop3}.
	Then the following hold:
	\begin{enumerate}[(i)]
		\item For the maximal valency $k_{\text{max}}$ in $\Gamma$ we have $k_{\text{max}} \leqslant t^2 + 3t+ 1$;
		\item For the number of vertices we have $4t +3 \leqslant n \leqslant 2 k_{\text{max}} -1 \leqslant 2t^2 + 6t +1 $.
	\end{enumerate}
\end{proposition}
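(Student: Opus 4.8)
The plan is to reduce everything to the earlier bounds by first extracting the arithmetic consequences of Lemma~\ref{lem:vanDamprop3}. Under the hypothesis that $\theta_1$ and $\theta_2$ share the multiplicity $m=(n-1)/2$, that lemma gives $\theta_1=(-1+\sqrt{4t+1})/2$ and $\theta_2=(-1-\sqrt{4t+1})/2$, so the two quantities I actually need are
\[
	\theta_1+\theta_2 = -1, \qquad \theta_1\theta_2 = \frac{1-(4t+1)}{4} = -t.
\]
The lemma also records $\theta_0=(n-1)/2$ and the lower bound $4t+3\leqslant n$. Finally, since $m=(n-1)/2$ is a multiplicity it is an integer, so $n$ is \emph{odd} and $\theta_0=(n-1)/2$ is a (positive) integer; I expect the parity remark to be the one genuinely load-bearing observation in the whole argument.

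For part (i) the work is a single substitution. From the identities above,
\[
	(\theta_1+1)(\theta_2+1) = \theta_1\theta_2 + (\theta_1+\theta_2) + 1 = -t -1 +1 = -t,
\]
so that $1-(\theta_1+1)(\theta_2+1) = 1+t$. Feeding this into Proposition~\ref{pro:bound2}(i) — the unconditional bound, which needs no hypothesis on the complement — yields
\[
	k_{\text{max}} \leqslant \bigl(1-(\theta_1+1)(\theta_2+1)\bigr)^2 - \theta_1\theta_2 = (1+t)^2 + t = t^2+3t+1,
\]
which is exactly (i).

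For part (ii) I would assemble the three inequalities separately. The left-hand bound $4t+3\leqslant n$ is already supplied by Lemma~\ref{lem:vanDamprop3}. For the middle inequality $n\leqslant 2k_{\text{max}}-1$, I would combine $\theta_0=(n-1)/2$ with the strict inequality $\theta_0<k_{\text{max}}$ from Lemma~\ref{lem:rhobound}; here the integrality of $\theta_0$ is essential, since it upgrades the strict inequality to $\theta_0\leqslant k_{\text{max}}-1$, i.e. $(n-1)/2\leqslant k_{\text{max}}-1$, giving $n\leqslant 2k_{\text{max}}-1$. The final inequality is then immediate from part (i): $2k_{\text{max}}-1 \leqslant 2(t^2+3t+1)-1 = 2t^2+6t+1$. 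The argument is short and essentially mechanical; the only place where care is required is the parity/integrality step, without which one would obtain the weaker $n\leqslant 2k_{\text{max}}$ rather than $n\leqslant 2k_{\text{max}}-1$.
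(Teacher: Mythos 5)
Your proof is correct and follows essentially the same route as the paper: part (i) is the specialization of Proposition~\ref{pro:bound2}(i) (equivalently, the bound $\alpha_x\leqslant 1-(\theta_1+1)(\theta_2+1)=t+1$ combined with $k_{\text{max}}=\alpha_x^2-\theta_1\theta_2$), and part (ii) uses $\theta_0=(n-1)/2<k_{\text{max}}$ together with integrality to get $k_{\text{max}}\geqslant(n+1)/2$. Your explicit flagging of the parity/integrality step is a nice touch, since the paper states it only in passing.
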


\begin{proof}
	Let $x$ be a vertex with valency $k_{\text{max}}$.
For the first part, we have $\alpha_x = \sqrt{d_x + \theta_1 \theta_2} = \sqrt{d_x - t}$.
Then $\alpha_x \leqslant -(\theta_1 +1)( \theta_2 +1) +1 = t +1$.

For the second part, we have $k_{\text{max}} \geqslant (n+1)/2$ since $ k_{\text{max}} > \theta_0 = (n-1)/2$ is an integer. 
The upper bound follows easily. 
The lower bound follows from Lemma~\ref{lem:vanDamprop3}.
\end{proof}

To prove our next result we will need a theorem of Bell and Rowlinson which enables us to bound the number of vertices of a graph in terms of the multiplicity of one of its eigenvalues.

\begin{theorem}[See \cite{br03}]\label{thm:bell} 
	Let $\Gamma$ be a graph on $n$ vertices with an eigenvalue $\theta$ with multiplicity $n-t$ for some positive integer $t$.
	Then either $\theta \in \{0, -1\}$ or $n \leqslant \frac{t(t+1)}{2}$.
\end{theorem}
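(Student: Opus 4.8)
The plan is to represent each vertex as a vector in the orthogonal complement of the $\theta$-eigenspace and then bound $n$ by a dimension count inside that complement. Write $\mathcal E(\theta)$ for the $\theta$-eigenspace of $A$, so that $\dim\mathcal E(\theta)=n-t$; let $P$ be the orthogonal projection of $\mathbb R^n$ onto $\mathcal E(\theta)$ and put $Q=I-P$, the projection onto the $t$-dimensional space $U:=\mathcal E(\theta)^\perp$. For each vertex $u$ set $q_u:=Qe_u\in U$, where $e_u$ is the $u$-th standard basis vector. Since $A-\theta I$ commutes with $Q$ and annihilates $\mathcal E(\theta)$, one has $Q(A-\theta I)Q=A-\theta I$, which yields the master identity $q_u^\top(A-\theta I)q_v=A_{uv}-\theta\,\delta_{uv}$ for all $u,v$. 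This single identity encodes the adjacency structure in terms of the vectors $q_u$ and is the engine of the whole argument.

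First I would reduce the theorem to a linear-independence statement. The $n$ symmetric rank-one operators $q_uq_u^\top$ all lie in the space of symmetric operators on $U$, whose dimension is $t(t+1)/2$. Hence it suffices to prove that, when $\theta\notin\{0,-1\}$, the family $\{q_uq_u^\top:u\in V\}$ is linearly independent, since this forces $n\le t(t+1)/2$ immediately. (One may assume $\theta$ is not the spectral radius of $\Gamma$: in the applications $\theta\in\{\theta_1,\theta_2\}$, so this holds, and otherwise $\theta$ has multiplicity one, $t=n-1$, and the bound is trivial for $n\geqslant 3$.)

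To establish independence, suppose $\sum_u c_u q_uq_u^\top=0$ and aim to show $c=0$. Evaluating the quadratic form $x\mapsto x^\top\!\big(\sum_u c_u q_uq_u^\top\big)x$ at the vector $(A-\theta I)q_w$ and applying the master identity turns the relation into $\sum_u c_u(A_{wu}-\theta\,\delta_{wu})^2=0$ for every $w$, which simplifies to $(A+\theta^2 I)c=0$; that is, $c$ is a $(-\theta^2)$-eigenvector of $A$. This is exactly where the excluded values enter: one has $-\theta^2=\theta$ if and only if $\theta\in\{0,-1\}$, so the hypothesis $\theta\notin\{0,-1\}$ guarantees $-\theta^2\neq\theta$, whence orthogonality of eigenspaces for distinct eigenvalues gives $c\in U$. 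On the other hand, the relation $\sum_u c_u q_uq_u^\top=0$ is equivalent to $\sum_u c_u y_u z_u=0$ for all $y,z\in U$; taking $y=c\in U$ then shows that the entrywise nonnegative vector $f:=(c_u^2)_u$ is orthogonal to $U$, i.e. $f\in\mathcal E(\theta)$.

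Finally I would invoke the Perron–Frobenius theorem: $f$ is a nonnegative eigenvector of $A$ for $\theta$, and since $\theta$ is not the spectral radius, the only such vector is $0$; hence $c=0$, which completes the independence argument and the proof. The crux, and the step I expect to be the main obstacle, is precisely the passage $(A+\theta^2 I)c=0$ together with the elementary but decisive observation that $-\theta^2=\theta$ exactly for $\theta\in\{0,-1\}$: this both explains why $0$ and $-1$ are the exceptional eigenvalues and places $c$ inside $U$, so that the nonnegativity of $(c_u^2)_u$ can be exploited against Perron–Frobenius. Everything else reduces to routine manipulation of the projections $P$ and $Q$.
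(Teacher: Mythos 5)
The paper offers no proof of this statement: it is quoted directly from Bell and Rowlinson \cite{br03}, so there is no internal argument to compare yours against, and I can only assess your proof on its own terms. The core of it checks out. Since $Q$ commutes with $A$ and $(A-\theta I)P=0$, the identity $Q(A-\theta I)Q=A-\theta I$ holds and gives $q_u^\top(A-\theta I)q_v=A_{uv}-\theta\delta_{uv}$. Evaluating $\sum_u c_u(q_u^\top x)^2=0$ at $x=(A-\theta I)q_w=(A-\theta I)e_w$ and using $A_{uw}^2=A_{uw}$ together with $A_{ww}=0$ does yield $(A+\theta^2I)c=0$; the observation that $-\theta^2=\theta$ precisely when $\theta\in\{0,-1\}$ then puts $c$ into $U$, and plugging $y=c$ into the bilinear identity makes $f=(c_u^2)_u$ a nonnegative vector of $\mathcal{E}(\theta)$. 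Linear independence of the $n$ rank-one forms $q_uq_u^\top$ inside the $t(t+1)/2$-dimensional space of symmetric operators on $U$ then delivers exactly the claimed bound, and the gain over the usual star-complement count comes precisely from indexing the forms by all $n$ vertices rather than by a star set. This is a clean and essentially self-contained route.

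The one genuine gap is the final Perron--Frobenius step. The assertion that a nonzero nonnegative eigenvector must belong to the spectral radius requires $A$ to be irreducible, i.e.\ $\Gamma$ connected, and your disposal of the case where $\theta$ is the spectral radius (``then it has multiplicity one'') is likewise a connected-graph fact; the theorem as stated carries no connectivity hypothesis. One can reduce to components: if each component $\Gamma_i$ satisfies $n_i\leqslant t_i(t_i+1)/2$, where $t_i$ is the codimension of the $\theta$-eigenspace of $\Gamma_i$, then $n\leqslant t(t+1)/2$ follows because $\sum_i t_i^2\leqslant\left(\sum_i t_i\right)^2$. But carrying out that reduction exposes the fact that the statement as quoted is false in degenerate cases: $K_2$ with $\theta=1$ has $n=2>1=t(t+1)/2$, and $2K_2$ gives $n=4>3$. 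So you should either add the hypothesis that $\Gamma$ is connected with at least three vertices --- which is all this paper ever uses, since the theorem is only applied with $\theta\in\{\theta_1,\theta_2\}$ for graphs in $\mathcal G(\theta_0,\theta_1,\theta_2)$, where $\theta$ is never the Perron root --- or argue component by component and flag the small exceptions. With that caveat supplied, your argument is complete.
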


We note that, using the classification of Van Dam~\cite[Section 7]{Dam3ev} of graphs with three distinct eigenvalues with at most $29$ vertices and the above result of Bell and Rowlinson, we obtain readily the graphs in $\mathcal G(\theta_0,\theta_1,\theta_2)$ where the multiplicity of $\theta_1$ or $\theta_2$ is at most $6$.

\begin{lemma}\label{lem:th2bound}
	Let $\Gamma$ be an $n$-vertex graph in $\mathcal G(\theta_0,\theta_1,\theta_2)$.
	For $\{ \theta_l, \theta_s \} = \{ \theta_1, \theta_2 \}$ where the multiplicity of $\theta_l$ is at least that of $\theta_s$, we have the following inequalities.
	\begin{align*}
		\theta_l^2 &\leqslant 2(n-(1-1/n));  \\ 
		\theta_s^2 &\leqslant n \sqrt{(n-1)/2} + 1/(2(n-1)).
	\end{align*}
\end{lemma}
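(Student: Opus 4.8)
The plan is to read off both bounds from the two power-sum identities \eqref{sums}, combined with the trivial edge bound $2e \leqslant n(n-1)$ and the spectral-radius estimate $\theta_0^2 \geqslant n-1$ furnished by Lemma~\ref{lem:rhobound}. Writing $m_l$ and $m_s$ for the multiplicities of $\theta_l$ and $\theta_s$, the second relation in \eqref{sums} reads $\theta_0^2 + m_l\theta_l^2 + m_s\theta_s^2 = 2e$, which gives the single workhorse inequality
\[
	m_l \theta_l^2 + m_s \theta_s^2 = 2e - \theta_0^2 \leqslant n(n-1) - (n-1) = (n-1)^2 .
\]
The two cases then differ only in how this budget is charged to the two eigenvalues and in how small the relevant multiplicity can be.

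For the first inequality I would use that $\theta_l$ carries the larger multiplicity, so that $m_l \geqslant (n-1)/2$ because $m_l + m_s = n-1$. Discarding the nonnegative term $m_s\theta_s^2$ leaves $m_l\theta_l^2 \leqslant (n-1)^2$, and dividing by $m_l$ yields $\theta_l^2 \leqslant 2(n-1)$, which is even slightly stronger than the stated $2(n-(1-1/n))$. The stated form is recovered verbatim if one invokes only the weaker estimate $\theta_0^2 \geqslant (n-1)^2/n$ (an immediate consequence of Lemma~\ref{lem:rhobound}, since $n-1 \geqslant (n-1)^2/n$) in place of $\theta_0^2 \geqslant n-1$.

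The second inequality is the delicate one, because $m_s$ may a priori be tiny and dividing $(n-1)^2$ by $m_s$ is then useless; the idea is to first force $m_s$ to be large. Apply Theorem~\ref{thm:bell} to $\theta_l$: it has multiplicity $m_l = n-(m_s+1)$, so with $t = m_s+1$ we obtain the dichotomy $\theta_l \in \{0,-1\}$ or $n \leqslant \tfrac12(m_s+1)(m_s+2)$. Now $\theta_l \neq 0$ (otherwise $\theta_1 = 0$ and $\Gamma$ is complete bipartite by Corollary~\ref{cor:compbi}) and $\theta_l \neq -1$ (since the negative eigenvalue satisfies $\theta_2 \leqslant -\sqrt{2} < -1$), so the second alternative holds and gives a lower bound of order $m_s \gtrsim \sqrt{2n}$. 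Feeding this into $m_s\theta_s^2 \leqslant 2e \leqslant n(n-1)$ produces $\theta_s^2 \leqslant n(n-1)/m_s$; note that a bound $m_s \geqslant \sqrt{2(n-1)}$ would give exactly $n\sqrt{(n-1)/2}$, so the precise statement amounts to closing the small gap between $\sqrt{2(n-1)}$ and the actual Bell--Rowlinson threshold, which is what the additive correction $1/(2(n-1))$ absorbs.

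The main obstacle is precisely the step that tames $\theta_s$: the lower bound on $m_s$ rests on $\theta_l \notin \{0,-1\}$, and this is where non-bipartiteness is indispensable. For a complete bipartite graph $\theta_l = 0$, Theorem~\ref{thm:bell} says nothing, and the second inequality genuinely fails — for instance $K_{5,4}$ has $n = 9$ and $\theta_s^2 = 20$, whereas $n\sqrt{(n-1)/2} + 1/(2(n-1)) = 18 + \tfrac{1}{16} < 20$ — so the second inequality must be read for $\Gamma$ non-bipartite. The only remaining difficulty is the book-keeping that converts the crude estimates $m_s\theta_s^2 \leqslant 2e$ and $m_s \gtrsim \sqrt{2n}$ (together with the integrality of $m_s$) into the exact constant claimed; for sharpening these the Lagrange identity $m_l m_s(\theta_l-\theta_s)^2 = 2e(n-1) - n\theta_0^2$, obtained by eliminating between the two relations in \eqref{sums}, is likely to be the most useful tool.
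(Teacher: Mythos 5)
Your argument is essentially the paper's: both proofs bound $\operatorname{tr}(A^2) = \theta_0^2 + m_1\theta_1^2 + m_2\theta_2^2$ from above (the paper via $\operatorname{tr}(A^2) \leqslant nk_1 \leqslant n^2$, you via $2e \leqslant n(n-1)$), use $\theta_0^2 \geqslant n-1$ from Lemma~\ref{lem:rhobound}, take $m_l \geqslant (n-1)/2$ for the first inequality, and invoke Theorem~\ref{thm:bell} to force $m_s$ to be of order $\sqrt{2n}$ for the second. Your first bound $\theta_l^2 \leqslant 2(n-1)$ is in fact slightly sharper than the stated one, which comes from the looser budget $n^2-(n-1)$ in place of your $(n-1)^2$. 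For the second inequality the paper simply asserts $m_s \geqslant \sqrt{2(n-1)}$ and divides, which would give $n\sqrt{(n-1)/2} + 1/\sqrt{2(n-1)}$; your observation that Theorem~\ref{thm:bell} with $t = m_s+1$ only yields $m_s \geqslant (-3+\sqrt{8n+1})/2$, a little below $\sqrt{2(n-1)}$, is a fair criticism of the paper's own step rather than a defect of your write-up --- the ``book-keeping'' you defer is not actually carried out in the paper either, and the lemma is only ever used as a crude bound. Finally, your $K_{5,4}$ counterexample is a genuine catch: the second inequality requires $\theta_l \notin \{0,-1\}$, i.e.\ non-bipartiteness, a hypothesis the statement omits but whose proof (via Lemma~\ref{lem:vanDamprop3} for the equal-multiplicity case and Theorem~\ref{thm:bell} for the rest) silently uses.
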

\begin{proof}
	Lemma~\ref{lem:vanDamprop3} deals with the case when the multiplicities are equal.
	We can therefore assume that the multiplicity $m_l$ of $\theta_l$ is at least $n/2$.
	Let $A$ be the adjacency matrix of $\Gamma$ and let $k_1$ be the largest valency of the vertices of $\Gamma$.
	Then since $n k_1 \geqslant \operatorname{tr}(A^2)$ we have
	\[
		n^2 \geqslant n k_1 \geqslant \theta_0^2 + m_1 \theta_1^2 + m_2 \theta_2^2 \geqslant n-1 + n/2 \theta_l^2.
	\] 
	This gives the first inequality.
	By Theorem~\ref{thm:bell}, the multiplicity $m_s$ of $\theta_s$ is at least $\sqrt{2(n-1)}$.
	Then the second inequality follows in the same way as above.
\end{proof}

Lemma~\ref{lem:th2bound} gives us a crude bound on the size of the two smaller eigenvalues of a graph with precisely three distinct eigenvalues.


	We call the $11$-vertex cone over the Petersen graph, the \textbf{Petersen cone} (see \cite[Fig. 1]{Dam3ev}) and the \textbf{Van Dam-Fano graph} (see \cite[Fig. 2]{Dam3ev}), the graph formed by taking the incidence graph of the complement of the Fano plane, and adding edges between every pair of blocks.

\begin{lemma}\label{lem:th0bound}
	Let $\Gamma$ be an $n$-vertex non-bipartite graph in $\mathcal G(\theta_0,\theta_1,\theta_2)$.
	Then $\theta_0 \leqslant n - 6$ with equality if and only if $\Gamma$ is the Petersen cone or the Van Dam-Fano graph.
\end{lemma}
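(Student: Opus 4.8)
The plan is to recast the inequality $\theta_0 \leqslant n-6$ in the equivalent form $n-1-\theta_0 \geqslant 5$ and to extract this from the local structure around a vertex of maximum valency. Fix a vertex $x$ with $d_x = k_{\text{max}}$ and let $R$ denote its set of non-neighbours, so that $n = 1 + k_{\text{max}} + |R|$. Since $\Gamma$ is non-bipartite, Remark~\ref{rem:alphaints} gives that $\theta_0$ is an integer, and by Lemma~\ref{lem:rhobound} we have $\theta_0 < k_{\text{max}}$; as both are integers, $\theta_0 \leqslant k_{\text{max}}-1$. Substituting this into $n-1-\theta_0 = (k_{\text{max}}-\theta_0) + |R|$ yields the crude bound $n - 1 - \theta_0 \geqslant 1 + |R|$, which already settles the claim (with strict inequality) whenever $|R| \geqslant 4$. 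The whole difficulty is therefore concentrated in the complementary regime $\theta_0 \geqslant n-5$, where $|R| \leqslant 3$ and $k_{\text{max}} \geqslant n-4$, so $x$ is near-universal and the valency gap $k_{\text{max}}-\theta_0$ is at most $4$; here one must produce an \emph{honest} lower bound on this gap.

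First I would treat the case $|R| = 0$, where $\Gamma$ is a cone: then $\overline{\Gamma}$ is disconnected, so Theorem~\ref{thm:disconn} and Corollary~\ref{cor:cone3} restrict the structure severely, the top valency class inducing a strongly regular (or complete) component of $\overline{\Gamma}$ subject to the cone condition of Proposition~\ref{propcone}. The target becomes $k_{\text{max}}-\theta_0 \geqslant 5$, and I expect to approach it by writing the eigenvalue equation $\theta_0 \alpha_x = \sum_{y \sim x}\alpha_y$ at the cone vertex and using $\alpha_x - \alpha_y \geqslant 1$ (the integrality of $\alpha_x^2$ and $\alpha_x\alpha_y$, as in the proof of Lemma~\ref{lem:bound1}) to convert the gap into a count of lower-valency neighbours. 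For the remaining range $1 \leqslant |R| \leqslant 3$, the key tool is the local neighbour-count identity: for $y \in N(x)$, since $d_y = 1 + \nu_{x,y} + (\text{neighbours of } y \text{ in } R)$ and $\nu_{x,y} = \alpha_x\alpha_y + (\theta_1+\theta_2)$, the number of neighbours of $y$ in $R$ equals
\[
	\alpha_y(\alpha_y - \alpha_x) - (\theta_1+1)(\theta_2+1).
\]
Summing over $y \in N(x)$ and equating with the edge count $\sum_{z\in R}\nu_{x,z} = \alpha_x\sum_{z\in R}\alpha_z$ gives a single identity tying $\theta_0$ (through $\sum_{y\sim x}\alpha_y$), the sum $\sum_{y\in N(x)}d_y$, and the small quantity $\sum_{z\in R}\alpha_z$. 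Feeding in the pointwise constraints of Lemma~\ref{lem:bound1}, namely $\alpha_y(\alpha_x-\alpha_y) \leqslant -(\theta_1+1)(\theta_2+1)$ and $\alpha_x\alpha_y \geqslant -\theta_1-\theta_2$, together with the integrality of $\alpha_x^2$ and $\alpha_x\alpha_y$, I expect to trap $\theta_1$ and $\theta_2$ in a short list of small values, against which $|R| + (k_{\text{max}}-\theta_0) \geqslant 5$ can be checked case by case, with equality only when $|R|=3$, $k_{\text{max}}-\theta_0=2$, and $\theta_1=1$, $\theta_2=-2$.

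Finally, equality $\theta_0 = n-6$ collapses every inequality used, which in both the cone and the non-cone branch should force $\theta_1 = 1$ and $\theta_2 = -2$; the smallest eigenvalue is then $-2$, so I would conclude by invoking Van Dam's classification of graphs in $\mathcal{G}(\theta_0,\theta_1,\theta_2)$ with smallest eigenvalue at least $-2$ to identify $\Gamma$ as the Petersen cone or the Van Dam--Fano graph, verifying directly (as the spectra $\{5,1^{5},(-2)^{5}\}$ and $\{8,1^{6},(-2)^{7}\}$ show) that both attain equality. The main obstacle is precisely the near-universal regime $|R|\leqslant 3$: the crude counting only delivers $n-1-\theta_0 \geqslant 1 + |R|$, so squeezing out the remaining units of the gap, and doing so sharply enough to \emph{characterise} equality rather than merely bound $\theta_0$, is where the real work lies; it is this tight analysis that dictates the appearance of the constant $6$ and of exactly these two extremal graphs. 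Should the direct case analysis prove unwieldy, a fallback is to bound $n$ in this regime via Lemma~\ref{lem:th2bound} and reduce to Van Dam's enumeration of graphs on at most $29$ vertices, which contains both extremal examples.
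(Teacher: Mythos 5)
Your reduction to the regime $k_{\max}\geqslant n-4$ is sound, and the cone branch can indeed be pushed through essentially as you indicate: for $n\geqslant 26$ one has $\theta_0\alpha_x=\sum_{y\sim x}\alpha_y\leqslant (n-1)(\alpha_x-1)$ (the cone vertex is the unique vertex of valency $n-1$, since two adjacent such vertices would force $(\theta_1+1)(\theta_2+1)=0$), hence $\theta_0\leqslant (n-1)(1-1/\alpha_x)\leqslant n-6$ once $\alpha_x\leqslant(n-1)/5$, which is automatic because $\alpha_x^2=n-1+\theta_1\theta_2<n-1\leqslant (n-1)^2/25$; smaller $n$ is covered by Van Dam's enumeration. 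But the heart of the lemma --- the case $1\leqslant |R|\leqslant 3$ --- is not actually proved. The step where you ``expect to trap $\theta_1$ and $\theta_2$ in a short list of small values'' is precisely the difficulty, and the constraints you assemble do not deliver it: for $y\sim x$ the condition $0\leqslant e(y,R)\leqslant 3$ only pins $\alpha_y(\alpha_x-\alpha_y)$ into a window of width $3$ around $-(\theta_1+1)(\theta_2+1)$, and similarly for $z\in R$ around $-\theta_1\theta_2$; these tie products of $\alpha$-entries to $\theta_1\theta_2$ but leave $\theta_1$ and $-\theta_2$ free to grow together. Your fallback is circular: Lemma~\ref{lem:th2bound} bounds $\theta_1^2$ and $\theta_2^2$ \emph{from above in terms of $n$}, so it cannot be used to bound $n$ and reduce to the $n\leqslant 29$ classification. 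The equality analysis is likewise asserted rather than derived (you claim equality in the non-cone branch occurs only for $|R|=3$, $k_{\max}-\theta_0=2$ without excluding, say, $|R|=1$, $k_{\max}-\theta_0=4$).

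The paper closes the hard case with a short global spectral argument rather than local structure around a maximum-valency vertex. Assuming all eigenvalues rational, $\theta_2\leqslant -3$, $n\geqslant 30$ and $n-6\leqslant\theta_0\leqslant n-2$, Lemma~\ref{lem:rhobound} gives $\theta_0^2+m_1\theta_1^2+m_2\theta_2^2=\sum_v d_v<n\theta_0$; since $\theta_0(n-\theta_0)\leqslant 6(n-6)$ and $m_1+m_2=n-1$, this forces $\min\{\theta_1,-\theta_2\}<\sqrt 6$ and hence $\theta_1\leqslant 2$. The multiplicity formulas~\eqref{mult} then yield $3\leqslant -\theta_2<\theta_0(n-\theta_0+\theta_1)/(\theta_0+(n-1)\theta_1)$, which is violated for $\theta_1=2$ and, after the integrality of $m_2=(n-1+\theta_0)/4$ pins $\theta_0\in\{n-5,n-3\}$, also for $\theta_1=1$ with $\theta_2=-3$. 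The cases $\theta_2=-2$ (where both extremal graphs live), irrational $\theta_1,\theta_2$ (where $\theta_0=(n-1)/2$ forces $n\leqslant 11$), and $n\leqslant 29$ are all disposed of by Van Dam's classifications. To salvage your local approach you would need an argument of comparable strength for $1\leqslant|R|\leqslant 3$; as written, that case is a genuine gap.
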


\begin{proof}
	By Theorem~\ref{thm:integralsr} we know that $\theta_0$ is rational.
	First suppose that $\theta_1$ and $\theta_2$ are irrational.
	Then they must have the same multiplicities, hence we can apply Lemma~\ref{lem:vanDamprop3} to obtain the equality $\theta_0 = (n-1)/2$.
	If $n - 6 \leqslant (n-1)/2$ then $n \leqslant 11$, but no such graph exists (all graphs on up to $29$ vertices have been classified by Van Dam).
	
	Now suppose all eigenvalues are rational.
	If $\theta_2 = -2$ then the lemma holds by examination of the classification theorem of Van Dam~\cite[Theorem 7]{Dam3ev}.
	It remains to assume $\theta_2 \leqslant -3$ and we assume that $n-6 \leqslant \theta_0 \leqslant n-2$ and $n \geqslant 30$.
	Then, by Lemma~\ref{lem:rhobound}, we can write $\theta_0^2 + m_1 \theta_1^2 + m_2 \theta_2^2 = \sum_{v \in \Gamma} d_v < n \theta_0$. 
	Since $m_1 +m_2 = n-1$ and $\theta_0(n-\theta_0) \leqslant 6(n-6)$, we have $\min\{\theta_1, -\theta_2\} < \sqrt{6}$.
	Hence $\theta_1 \leqslant 2$.

	Together with the expressions for the multiplicities $m_1$ and $m_2$ from Eq. \eqref{mult}, we deduce that 
	\begin{equation}
		\label{eqn:inequ1}
		3 \leqslant -\theta_2 < \frac{\theta_0(n - \theta_0 + \theta_1)}{\theta_0 + (n-1)\theta_1}.
	\end{equation}
		
    If $\theta_1 =2$, then we have $ -\theta_2 < 8 (n-6)/(3n - 8) < 3$; a contradiction.
    It remains to consider $\theta_1 = 1$.
	Here we have $ -\theta_2 \leqslant 7(n-6)/(2n - 7) < 4$, hence $\theta_2 = -3$.
	Using the multiplicity equations \eqref{mult}, we can write the multiplicity of $\theta_2$ as $m_2=(n-1+\theta_0)/4$.
	Since $m_2$ is an integer, $\theta_0$ must be either $n-5$ or $n-3$.
	But then, in either case, \eqref{eqn:inequ1} gives $-\theta_2 < 3$, which is a contradiction.
\end{proof}


\section{Biregular graphs with three eigenvalues}
\label{sec:bireg}

In this section we focus on biregular graphs with precisely three distinct eigenvalues.

\subsection{Computing feasible valency-arrays and spectra} 
\label{sub:computing_feasible_parameters}

Let $\Gamma$ be a graph having $r$ distinct valencies $k_1>\dots>k_r$ with multiplicities $n_1,\dots,n_r$, i.e., $n_i := |\{v \in V(\Gamma) : d_v = k_i \}|$.
The \textbf{valency-array} of $\Gamma$ is defined to be the tuple $(n_1,\dots,n_r ; k_1, \dots, k_r)$.
The main result (Theorem~\ref{prop2val}) of this section gives us strong restrictions on the valency-arrays of biregular graphs with three eigenvalues.

We begin with a result about biregular cones.  

\begin{proposition}[See \cite{Dam3ev}]\label{pro:biregcones}
	Let $\Gamma$ be a biregular cone in $\mathcal G(\theta_0,\theta_1,\theta_2)$.
	Then $\Gamma$ is a cone over a strongly regular graph.
\end{proposition}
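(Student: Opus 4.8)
The plan is to peel off the cone vertex and show that what remains is strongly regular, exploiting the fact that eigenvectors of the deleted graph which are orthogonal to the all-ones vector lift to eigenvectors of $\Gamma$.

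First I would pin down the cone vertex. Since $\Gamma$ is a cone it has a vertex of valency $n-1$, and since $\Gamma$ is biregular its two valencies are $k_1 = n-1$ and some $k_2 < n-1$. I would begin by checking that exactly one vertex has valency $n-1$. If $u$ and $w$ both did, they would be adjacent with $\nu_{u,w} = n-2$; since $u$ and $w$ share the valency $n-1$ we have $\alpha_u = \alpha_w$ and $\alpha_u^2 = n-1+\theta_1\theta_2$, so substituting into $\nu_{u,w} = (\theta_1+\theta_2)A_{u,w} + \alpha_u\alpha_w = \theta_1+\theta_2+\alpha_u^2$ forces $(\theta_1+1)(\theta_2+1) = 0$, which is impossible because $\theta_1 \geqslant 0$ and $\theta_2 \leqslant -\sqrt{2}$. (Alternatively, this uniqueness is immediate from Claim~3 in the proof of Theorem~\ref{thm:disconn}, as each valency-$(n-1)$ vertex is isolated in $\overline\Gamma$.) Writing $v_0$ for this unique vertex and $\Gamma' := \Gamma - v_0$, every remaining vertex had valency $k_2$ in $\Gamma$ and loses precisely its edge to $v_0$, so $\Gamma'$ is regular of valency $k_2-1$ on $n-1$ vertices.

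Next I would transfer the spectral information. Any eigenvector $\mathbf v$ of the adjacency matrix $A'$ of $\Gamma'$ that is orthogonal to the all-ones vector extends, by a zero in the $v_0$-coordinate, to an eigenvector of $A$ with the same eigenvalue; this eigenvalue cannot equal $\theta_0$, since the Perron eigenvector $\alpha$ of $\Gamma$ is strictly positive whereas the extended vector vanishes at $v_0$. Hence it lies in $\{\theta_1,\theta_2\}$. Because $\Gamma'$ is regular, $\mathbf 1$ is an eigenvector with eigenvalue $k_2-1$ and its orthogonal complement is $A'$-invariant, so the remaining $n-2$ eigenvalues of $\Gamma'$ all lie in $\{\theta_1,\theta_2\}$. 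Thus $\Gamma'$ is a regular graph whose eigenvalues are contained in $\{k_2-1,\theta_1,\theta_2\}$, and a connected regular graph with at most three eigenvalues is $K_t$ or strongly regular; it remains only to locate $\Gamma'$ in this dichotomy and to secure its connectedness.

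The delicate step, and the one I expect to be the main obstacle, is disposing of the degenerate possibilities. If $\Gamma'$ were disconnected then $k_2-1$ would have multiplicity at least $2$, so by the lifting argument $k_2-1 \in \{\theta_1,\theta_2\}$; as $k_2-1 \geqslant 0 > \theta_2$ this forces $k_2-1 = \theta_1$ and leaves $\Gamma'$ with eigenvalues in $\{\theta_1,\theta_2\}$, that is, a disjoint union of cliques, whose least eigenvalue $-1$ contradicts $\theta_2 \leqslant -\sqrt{2}$ unless the cliques are single vertices. That last exceptional case is exactly $\theta_1=0$, where by Corollary~\ref{cor:compbi} $\Gamma$ is complete bipartite, hence the star $K_{1,n-1}$; this is the degenerate instance of the statement (a cone over the edgeless graph) and is handled by assuming $\Gamma$ is non-bipartite. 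Otherwise $\Gamma'$ is connected and cannot have only two distinct eigenvalues, since then $\Gamma' = K_{n-1}$ and $k_2 = n-1 = k_1$, contradicting biregularity. Therefore $\Gamma'$ has exactly the three eigenvalues $k_2-1 > \theta_1 > \theta_2$ and is strongly regular, so $\Gamma$ is the cone over $\Gamma'$, as claimed.
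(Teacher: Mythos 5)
Your argument is correct. The paper does not actually prove this proposition --- it is quoted from Van Dam's paper \cite{Dam3ev} --- but your route (delete the apex, observe that eigenvectors of $\Gamma-v_0$ orthogonal to the all-ones vector lift by a zero coordinate to eigenvectors of $\Gamma$ with the same eigenvalue, and then rule out the degenerate spectra) is the standard one and uses exactly the lifting device that the paper deploys for complement components in the proof of Theorem~\ref{thm:disconn}. All the individual steps check out: the uniqueness of the apex via $(\theta_1+1)(\theta_2+1)=0$, the exclusion of $\theta_0$ by Perron--Frobenius, and the elimination of the disconnected case via $\theta_2=-1$. Your observation that $K_{1,n-1}$ is a genuine exception to the literal statement (it is a biregular cone in $\mathcal G(\sqrt{n-1},0,-\sqrt{n-1})$ but a cone over an edgeless, hence non-strongly-regular, graph) is a fair catch; the paper implicitly sets complete bipartite graphs aside via the dichotomy of Theorem~\ref{thm:disconn} and Corollary~\ref{cor:compbi}, and restricting to non-bipartite $\Gamma$ as you do disposes of it cleanly.
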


\begin{remark}
	\label{rem:WL}
	Our Theorem~\ref{thm:disconn} is reminiscent of Proposition 6.1 (a) given by Muzychuk and Klin~\cite{MK}.
	Let $\Gamma \in \mathcal G(\theta_0,\theta_1,\theta_2)$ and let $W(\Gamma)$ denote its \emph{Weisfeiler-Lehman closure} (see \cite[Section 6]{MK}).
	We also remark that, by Theorem~\ref{thm:disconn} and Proposition~\ref{pro:biregcones}, we see that \cite[Proposition 6.1 (a)]{MK} says that if $\dim(W(\Gamma)) = 6$ then $\Gamma$ is biregular with a disconnected complement.
	Muzychuk and Klin~\cite{MK} suggest classifying all graphs $\Gamma \in \mathcal G(\theta_0,\theta_1,\theta_2)$ satisfying $\dim(W(\Gamma)) = 9$, which is the next interesting case after $\dim(W(\Gamma)) = 6$.
\end{remark}
Van Dam~\cite{Dam3ev} showed that if a graph $\Gamma$ has precisely three distinct eigenvalues and at most three distinct valencies then the valency partition is equitable.
We show a slightly refined version of this result where we assume that $\Gamma$ has precisely two distinct valencies.

\begin{theorem}\label{prop2val}
	Let $\Gamma$ be an $n$-vertex non-bipartite biregular graph in $\mathcal G(\theta_0,\theta_1,\theta_2)$ with valency-array $(n_1,n_2;k_1,k_2)$. 
	Then the following conditions hold:
	\begin{enumerate}[(i)]
		\item The valency partition $\{ V_1, V_2\}$ is an equitable partition of $\Gamma$ with quotient matrix $Q = \begin{pmatrix} k_{11} & k_{12} \\ k_{21} & k_{22} \end{pmatrix}$, where
	$$ k_{11} = \frac{\alpha_1 \theta_0 - \alpha_2 k_1}{\alpha_1 - \alpha_2}, \;
	   k_{12} = \alpha_1\frac{k_1 - \theta_0}{\alpha_1 - \alpha_2}, \;
	    k_{21} = \alpha_2 \frac{\theta_0 - k_2}{\alpha_1 - \alpha_2}, \;
		 k_{22} =\frac{\alpha_1 k_2 - \alpha_2 \theta_0}{\alpha_1 - \alpha_2}.$$
		\item All eigenvalues of $\Gamma$ are integers.
		\item If the matrix $Q$ has eigenvalues $\theta_0 $ and $\theta$, then $\alpha_1 \alpha_2 = -\theta(\theta' +1)$ where $\{ \theta, \theta'\} = \{ \theta_1, \theta_2\}$. In particular, if $k_{11} =0$ or $k_{22} =0$ then $\alpha_1 \alpha_2 =- \theta_2(\theta_1 +1)$.
		\item We have $$n = \frac{(\alpha_1^2 + \alpha_1 \alpha_2 + \alpha_2^2 - \theta_0 -\theta_1 \theta_2)(\theta_0 - \theta_1)(\theta_0 - \theta_2)}{(\theta_0 + \theta_1 \theta_2 + \alpha_1 \alpha_2)\alpha_1 \alpha_2};$$
	$$n_1 =  \frac{(\theta_0 - \alpha_2^2 + \theta_1 \theta_2)(\theta_0 - \theta_1)(\theta_0 - \theta_2)}{(\theta_0 + \theta_1 \theta_2 + \alpha_1 \alpha_2)(\alpha_1 - \alpha_2)\alpha_1 };$$
	 $$n_2 =  \frac{(\alpha_1^2 - \theta_0 - \theta_1 \theta_2)(\theta_0 - \theta_1)(\theta_0 - \theta_2)}{(\theta_0 + \theta_1 \theta_2 + \alpha_1 \alpha_2)(\alpha_1 - \alpha_2)\alpha_2 }.$$
		\item The following conditions are equivalent: 
		\begin{enumerate}
			\item $k_{21} = n_1$;
			\item $k_{12} = n_2$;
			\item $n_1 = 1$;
			\item $\Gamma$ is a cone over a strongly regular graph.
		\end{enumerate}
		\item If $n$ is a prime at least $3$, then $\Gamma$ is a cone over a strongly regular graph.
		\item  $\alpha_1 -1 \leqslant (\alpha_1 - \alpha_2)\alpha_2 \leqslant \min \{-(\theta_1 +1)(\theta_2 +1), -\theta_1\theta_2\} $, unless $\Gamma$ is a cone over a strongly regular graph.
	\end{enumerate}
\end{theorem}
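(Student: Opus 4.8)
The plan is to extract everything from the two defining spectral identities $A\alpha=\theta_0\alpha$ and $(A-\theta_1 I)(A-\theta_2 I)=\alpha\alpha^T$, then deduce (v) and (vii) from results already in the paper, leaving (vi) as the one genuinely arithmetic step. For (i)--(iii) I would first note that since $d_x=\alpha_x^2-\theta_1\theta_2$ and $\alpha_x>0$, every vertex of a given valency has the same $\alpha$-entry, so $\alpha=\alpha_1\mathbf 1_{V_1}+\alpha_2\mathbf 1_{V_2}$ is constant on the valency classes. Reading $A\alpha=\theta_0\alpha$ at a vertex $x\in V_1$ and splitting its neighbours, $d_x^{(1)}\alpha_1+d_x^{(2)}\alpha_2=\theta_0\alpha_1$ together with $d_x^{(1)}+d_x^{(2)}=k_1$, forces $d_x^{(1)}$ to the stated constant $k_{11}$ independent of $x$ (similarly for $V_2$); this gives equitability and the quotient matrix $Q$. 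The eigenvalues of $Q$ are eigenvalues of $A$, one being $\theta_0$, and since $\theta_0$ is simple the other eigenvalue $\theta$ lies in $\{\theta_1,\theta_2\}$; computing $\operatorname{tr}Q=k_{11}+k_{22}=\theta_0+\theta$ yields $\theta=-\alpha_1\alpha_2-\theta_1\theta_2$, an integer by Remark~\ref{rem:alphaints}, whence (since $\theta_1+\theta_2,\theta_1\theta_2\in\mathbb Z$) both $\theta_1,\theta_2$ are integers, proving (ii). Rearranging gives $\alpha_1\alpha_2=-\theta(\theta'+1)$ with $\{\theta,\theta'\}=\{\theta_1,\theta_2\}$, which is (iii); and if $k_{11}=0$ or $k_{22}=0$ then $\det Q=-k_{12}k_{21}<0$ (connectivity forces $k_{12},k_{21}>0$), so $\theta_0\theta<0$ pins $\theta=\theta_2$.

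For (iv) the clean step is to apply $(A-\theta_1 I)(A-\theta_2 I)=\alpha\alpha^T$ to $\alpha$ itself: the left side is $(\theta_0-\theta_1)(\theta_0-\theta_2)\alpha$ and the right side is $(\alpha^T\alpha)\alpha$, so $n_1\alpha_1^2+n_2\alpha_2^2=\|\alpha\|^2=(\theta_0-\theta_1)(\theta_0-\theta_2)$. I would combine this with the edge-count $n_1k_{12}=n_2k_{21}$, which via the $k_{ij}$ of (i) reads $n_1\alpha_1(k_1-\theta_0)=n_2\alpha_2(\theta_0-k_2)$; these are two linear equations in $n_1,n_2$. Solving them, substituting $k_i=\alpha_i^2-\theta_1\theta_2$ and using $(\alpha_1-\alpha_2)\alpha_1=\alpha_1^2-\alpha_1\alpha_2$, $(\alpha_1-\alpha_2)\alpha_2=\alpha_1\alpha_2-\alpha_2^2$ together with $\alpha_1^2\alpha_2^2=(\alpha_1\alpha_2)^2$ to rationalise the denominators, gives the displayed expressions for $n_2$, then $n_1$, and $n=n_1+n_2$. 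This is purely mechanical once the norm identity is in hand.

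Parts (v) and (vii) I would deduce from earlier results. In (v), (a)$\Leftrightarrow$(b) is immediate from $n_1k_{12}=n_2k_{21}$. If (c) holds then $V_1=\{x\}$, so $k_{11}=0$ and by equitability the constant $k_{21}\in\{0,1\}$; $k_{21}=0$ isolates $x$, contradicting connectivity, so $k_{21}=1=n_1$, giving (a). Conversely (b) says $V_1$ is completely joined to $V_2$, so $\overline\Gamma$ has no $V_1$--$V_2$ edges and is disconnected; Theorem~\ref{thm:disconn} together with non-bipartiteness forces $\Gamma$ to be a cone, and Proposition~\ref{pro:biregcones} upgrades this to a cone over a strongly regular graph, i.e.\ (d); finally (d)$\Rightarrow$(c) since the apex is the unique vertex of valency $n-1$. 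For (vii), a $V_1$--$V_2$ adjacent pair exists by connectivity, so Lemma~\ref{lem:bound1}(i) gives $\alpha_1-1\le(\alpha_1-\alpha_2)\alpha_2\le-(\theta_1+1)(\theta_2+1)$; unless $k_{12}=n_2$ there is also a non-adjacent pair, and Lemma~\ref{lem:bound1}(ii) adds $(\alpha_1-\alpha_2)\alpha_2\le-\theta_1\theta_2$; but $k_{12}=n_2$ is exactly case (b)=(d), the excluded cone case, so outside it both upper bounds hold and we may take their minimum.

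Part (vi) is where I expect the real work. By (v) it suffices to prove $n_1=1$ when $n$ is prime. Writing $a=\alpha_1^2$, $b=\alpha_2^2$, $c=\alpha_1\alpha_2$ (integers with $ab=c^2$), $C=\theta_0+\theta_1\theta_2$ and $M=\theta_0-\theta'$, and using $P=(\theta_0-\theta)(\theta_0-\theta')=(C+c)M$, the formulas of (iv) collapse to the three integer identities $n_1(a-c)=(C-b)M$, $n_2(c-b)=(a-C)M$, and $nc=(a+b+c-C)M$. The task is then to show that $n=n_1+n_2$ cannot be prime once $n_1\ge 2$. The hard part will be precisely this divisibility step: one must exploit the rigidity coming from $ab=c^2$ (equivalently, that $\alpha_1\alpha_2$ is the exact geometric mean of $\alpha_1^2$ and $\alpha_2^2$, so that $\alpha_1/\alpha_2$ is rational) together with the integrality of $n_1$ and $n_2$ to produce a nontrivial factor of $n$ in the non-cone case, thereby ruling out primality unless the singleton degeneration $n_1=1$ occurs. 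I expect this arithmetic reduction, rather than any of the spectral manipulations in (i)--(v) and (vii), to be the crux of the theorem.
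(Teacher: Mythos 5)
Your treatment of (i)--(iii), (v) and (vii) is correct and follows essentially the paper's route; your (iv) is a genuinely nicer variant (the paper extracts $n$ from $\operatorname{tr}A^2=n_1k_1+n_2k_2$ together with the multiplicity equations \eqref{mult}, whereas you apply $(A-\theta_1I)(A-\theta_2I)=\alpha\alpha^{\top}$ to $\alpha$ itself to get the identity $n_1\alpha_1^2+n_2\alpha_2^2=(\theta_0-\theta_1)(\theta_0-\theta_2)$ directly, and then solve a $2\times2$ linear system with $n_1k_{12}=n_2k_{21}$; both give the same formulas, and yours avoids the multiplicities entirely).

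The genuine gap is part (vi), which you explicitly leave unproved and, worse, misdiagnose as the ``crux'' requiring a delicate Diophantine argument exploiting $\alpha_1^2\alpha_2^2=(\alpha_1\alpha_2)^2$. No such analysis is needed, and the rigidity you point to is not where the argument lives. The whole content of (vi) is the elementary relation already forced by equitability: counting edges between $V_1$ and $V_2$ gives $n_1k_{12}=n_2k_{21}$, hence
\[
n_1=\frac{k_{21}}{k_{12}+k_{21}}\,n .
\]
Thus $n$ divides $n_1(k_{12}+k_{21})$. If $n$ is prime, then since $1\leqslant n_1<n$ we must have $n\mid k_{12}+k_{21}$; but connectivity gives $k_{12},k_{21}\geqslant 1$ and trivially $k_{12}\leqslant n_2$, $k_{21}\leqslant n_1$, so $0<k_{12}+k_{21}\leqslant n$, forcing $k_{12}+k_{21}=n$ and therefore $k_{12}=n_2$ and $k_{21}=n_1$. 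Now part (v) applies and $\Gamma$ is a cone over a strongly regular graph. So (vi) is a two-line consequence of (v) and the edge count, not an arithmetic obstruction; as written, your proposal proves six of the seven assertions and leaves the seventh as an open problem you expect to be hard.
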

\begin{proof}
We will prove each part of the theorem in turn.
\begin{enumerate}[(i)]
	\item Let a vertex $x$ of valency $k_1$ have $k_{11}$ neighbours in $V_1$ and $k_{12} := k_1 - k_{11} $ neighbours in $V_2$.
The vector $\alpha$ is the $\theta_0$-eigenvector of $\Gamma$, therefore $k_{11} \alpha_1 + k_{12} \alpha_2 = \theta_0 \alpha_1$. 
Since $\alpha_1 > \alpha_2$, it follows that
\[
	k_{11} = \frac{\alpha_1 \theta_0 - \alpha_2 k_1}{\alpha_1 - \alpha_2} \quad \text{ and } \quad k_{12} = \alpha_1 \frac{k_1 -  \theta_0}{\alpha_1 - \alpha_2}.
\]
Applying this idea again to a vertex of valency $k_2$ gives the first part of the theorem.
	
	\item By Theorem~\ref{thm:integralsr}, $\theta_0$ is an integer.
	Since $\theta_0$ and $\theta$ are eigenvalues of $Q$, we have $\theta_0 + \theta = k_{11} + k_{22} \in \Z$ and hence $\theta \in \Z$.
	The trace of the adjacency matrix of $\Gamma$ is zero, whence the remaining eigenvalue of $\Gamma$ is integral.
	
	\item Clearly $Q$ has $\theta_0$ as an eigenvalue (with eigenvector $(\alpha_1, \alpha_2)$).
The eigenvalues of $Q$ are a subset (with multiplicity) of the eigenvalues of $\Gamma$, hence the other eigenvalue $\theta$ of $Q$ is in $\{\theta_1, \theta_2\}$.
Note that if $k_{11}= 0$ or $k_{22} =0$ then the determinant of $Q$ is negative, hence $Q$ has a negative eigenvalue, namely, $\theta_2$.
	Taking the determinant of $Q$, $\det Q = k_{11} k_{22} - k_{12}k_{21}$ and using the expressions for the $k_{ij}$ in (i), one obtains the expression for $\alpha_1\alpha_2$.

	\item Since the valency partition is equitable we have $k_{12} n_1 = k_{21} n_2$.
	Moreover, from $n_1 + n_2 = n$, we obtain that
\begin{equation}
	\label{n1n2} 
	n_1 = \frac{k_{21}}{k_{12} + k_{21}} n \quad \text{ and } \quad  n_2 = \frac{k_{12}}{k_{12} + k_{21}} n.
\end{equation}
From, $\theta_0^2 + m_1 \theta_1^2 + m_2 \theta_2^2 = n_1 k_1 + n_2 k_2$, using the multiplicity equations \eqref{mult} and the formulae for the $k_{ij}$'s in (i), one readily obtains the formula for $n$. 
The formulae for $n_1$ and $n_2$ follow easily.

	\item 
%
	
	That (a) is equivalent to (b) follows from (iv).
	Both (a) and (b) imply that the complement of $\Gamma$ is disconnected, which implies (c) by Theorem~\ref{thm:disconn}.
	By Proposition~\ref{pro:biregcones} (c) implies (d) and clearly (d) implies (a).

	\item If $n$ is a prime then by Eqs. \eqref{n1n2} one sees that $n_1$ and $n_2$ must be equal to $k_{21}$ and $k_{12}$. 
	Then use (v).
	
	\item This follows from Proposition~\ref{pro:bound2}.
\end{enumerate}  \vspace{-0.5cm} 
\end{proof}

We call a valency-array and spectra \textbf{feasible} if they together satisfy the assumptions of Theorem~\ref{prop2val} such that the matrix $Q$ is a nonnegative integer matrix and the quantities $n_1$, $n_2$, $m_1$, and $m_2$ are positive integers.
Using Theorem~\ref{prop2val} and Lemma~\ref{lem:th2bound} we have compiled a table of the feasible valency-arrays and spectra for biregular graphs with precisely three distinct eigenvalues, see the appendix.
\begin{remark}
	\label{rem:pairVA}
	Let $\Gamma$ be a biregular graph in $\mathcal G(\theta_0,\theta_1,\theta_2)$ with the spectrum of $\Gamma$ fixed.
	Using Theorem~\ref{prop2val} one can see that $\Gamma$ can have at most two possible valency-arrays.
	The problem of determining if the valency-array of $\Gamma$ is determined by its spectrum comes down to Diophantine analysis.
	So far we do not have any examples of a pair of valency-arrays corresponding to graphs with the same spectrum.
\end{remark}


\subsection{Bounding the second largest eigenvalue} 
\label{sub:bounding_the_second_largest_eigenvalue}

Neumaier~\cite{Neu} showed that for a fixed $m$, all but finitely many primitive strongly regular graphs with smallest eigenvalue at least $-m$ fall into two infinite families.

\begin{theorem}[See~\cite{Neu}]\label{thm:neuthm}
Let $m \geqslant 1$ be a fixed integer. 
Then there exists a constant $C(m)$ such that any connected and coconnected strongly regular graph $\Gamma$ with smallest eigenvalue $-m$ having more than $C(m)$ vertices has the following parameters (given in the form $\SRG(n, k, \lambda, \mu )$).
	\begin{enumerate}[(i)]
		\item $\SRG(n, sm, s-1 + (m-1)^2, m^2 )$ where $s \in \mathbb{N}$ and $n = (s+1)(s(m-1) +m)/m$;  or
		\item $\SRG((s+1)^2, sm, s-1+(m -2)(m-1), m(m-1))$ where $s \in \mathbb{N}$.
	\end{enumerate}
\end{theorem}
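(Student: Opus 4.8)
The plan is to reconstruct Neumaier's argument, whose engine is the interplay between Delsarte cliques and the structure of a partial geometry. Since $\Gamma$ is strongly regular with eigenvalues $\theta_0 > \theta_1 > \theta_2 = -m$, the Delsarte--Hoffman bound says that every clique has at most $1 + \theta_0/m$ vertices; I call a clique attaining this bound a \emph{Delsarte clique}. The goal is to show that, once $n$ exceeds a threshold $C(m)$, the edge set of $\Gamma$ is partitioned by a family $\mathcal{L}$ of Delsarte cliques with every vertex lying on exactly $m$ of them, so that $(V(\Gamma),\mathcal{L})$ is a partial geometry $pg(s,m-1,\alpha)$ whose point graph is $\Gamma$; a parameter analysis of such geometries then isolates the two families. (The hypothesis that $\Gamma$ is coconnected is what excludes the degenerate complete multipartite case.)

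\textbf{Main engine: the claw bound.} First I would record that at most $m$ Delsarte cliques can meet pairwise in a single common vertex $x$: each such clique contains $\theta_0/m$ neighbours of $x$, these neighbour sets are disjoint, and $x$ has only $\theta_0$ neighbours, so $p\cdot(\theta_0/m)\le\theta_0$ forces $p\le m$. Calling a vertex together with a maximal such family a \emph{claw}, the crux is Neumaier's claw bound: if $\Gamma$ is not geometric, then $\theta_1$ (equivalently $\mu$) is bounded by an explicit polynomial in $m$. I would prove this by a local counting and interlacing argument on the subconstituents of an edge $xy$: bounding the common neighbours that fail to lie on a shared Delsarte clique, one shows that a large $\theta_1$ forces each edge onto a unique, mutually compatible line, i.e.\ geometricity. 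Combined with the integrality of the multiplicities in Eq.~\eqref{mult}, a bounded $\theta_1$ together with $\theta_2=-m$ leaves only finitely many feasible parameter sets, accounting for the non-geometric exceptions.

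\textbf{Geometric case.} Here I would substitute $t=m-1$ into the standard parameter relations of a partial geometry $pg(s,m-1,\alpha)$, whose point graph is $\SRG\bigl((s+1)(s(m-1)+\alpha)/\alpha,\, sm,\, s-1+(m-1)(\alpha-1),\, \alpha m\bigr)$ with smallest eigenvalue $-m$ and $1\le\alpha\le m$. Setting $\alpha=m$ reproduces family (i) and $\alpha=m-1$ reproduces family (ii). It then remains to bound $s$, hence $n$, for the other values of $\alpha$: the case $\alpha=1$ is a generalized quadrangle $GQ(s,m-1)$, for which Higman's inequality gives $s\le(m-1)^2$; and for $2\le\alpha\le m-2$ the rationality and integrality conditions for a proper partial geometry with fixed line-regularity $t=m-1$ bound $s$ by a function of $m$. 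Thus for $n>C(m)$ only $\alpha\in\{m-1,m\}$ persist.

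\textbf{Main obstacle.} The decisive and most delicate step is the claw bound itself --- establishing that sufficiently large $\theta_1$ forces geometricity --- since it requires careful control, via interlacing on the subconstituents, of how the Delsarte cliques through an edge interact. A secondary difficulty is the explicit bounding of $s$ for the intermediate geometries $2\le\alpha\le m-2$, which rests on absolute feasibility bounds for proper partial geometries rather than on a single clean inequality.
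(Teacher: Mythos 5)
This statement is imported: the paper quotes Neumaier's classification from \cite{Neu} and uses it as a black box (in Lemma 5.3 and Theorem 5.4), so there is no internal proof to compare yours against. Judged as a reconstruction of Neumaier's own argument, your outline has the right architecture: Delsarte cliques, a claw/grand-clique bound separating a ``geometric'' case from a case with boundedly many vertices, and identification of the two infinite families with the partial geometries $pg(s,m-1,\alpha)$ for $\alpha=m$ (Steiner block graphs) and $\alpha=m-1$ (Latin square graphs).

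Two steps are genuinely off as written. First, $\theta_1$ and $\mu$ are not ``equivalent'': in an $\SRG(n,k,\lambda,\mu)$ with $\theta_2=-m$ one has $\mu=k-m\theta_1$, so bounding $\theta_1$ alone bounds neither $\mu$ nor $n$. Neumaier needs two separate inequalities in the exceptional branch --- the claw bound, of the shape $\theta_1\leqslant\tfrac{1}{2}m(m-1)(\mu+1)-1$, \emph{and} an independent $\mu$-bound $\mu\leqslant m^{3}(2m-3)$ --- and only their conjunction bounds $k=\mu+m\theta_1$ and hence $n=1+k+k(k-\lambda-1)/\mu$. Your sketch collapses these into one step and would not, as stated, yield the constant $C(m)$. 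Second, the disposal of the intermediate geometric cases $2\leqslant\alpha\leqslant m-2$ via ``rationality and integrality conditions'' is not a valid (or known) argument: integrality of the multiplicities does not bound $s$ for a proper $pg(s,m-1,\alpha)$. The correct observation is that such a point graph has $\mu=\alpha m\notin\{m(m-1),m^{2}\}$, so it is neither a Steiner nor a Latin square graph and therefore falls under the same claw-bound/$\mu$-bound branch, which bounds $s$; no separate feasibility analysis of proper partial geometries is needed (and only the $\alpha=1$ case has a clean standalone inequality, Higman's $s\leqslant(m-1)^{2}$). A smaller gap of the same kind: the pairwise disjointness of the neighbour sets cut out by distinct Delsarte cliques through a vertex, and the uniqueness of the grand clique through an edge, are exactly the delicate points the claw-bound machinery exists to establish; they are asserted rather than proved. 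With these repairs your outline matches Neumaier's proof.
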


In the next result we show that for fixed $\theta \neq 0$ there are only finitely many cones over strongly regular graphs with exactly three distinct eigenvalues and one of them equal to $\theta$.

\begin{lemma} \label{conesrg}
	Let $\theta \neq 0, -1$ be a fixed algebraic integer
	and let $\Gamma \in \mathcal G(\theta_0,\theta_1,\theta_2)$ be a cone over a strongly regular graph $\regSub$ with $\theta \in \{ \theta_0, \theta_1, \theta_2\}$.
	Then $\Gamma$ is one of a finite number of graphs.
\end{lemma}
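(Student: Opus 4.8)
The plan is to reduce the statement to bounding the number of vertices $n$ of $\Gamma$ in terms of $\theta$, since there are only finitely many graphs on a bounded number of vertices. Write $\regSub=\SRG(N,k,\lambda,\mu)$, so that $n=N+1$, and let $k>r>s$ be the eigenvalues of $\regSub$. First I would determine how the spectrum of $\Gamma$ relates to that of $\regSub$: each eigenvector of $\regSub$ orthogonal to the all-ones vector, extended by $0$ on the apex, is an eigenvector of $\Gamma$, so $r$ and $s$ are eigenvalues of $\Gamma$; the two remaining eigenvalues are those of the quotient matrix $\left(\begin{smallmatrix}0&N\\1&k\end{smallmatrix}\right)$ of the (equitable) apex/base partition. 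Since $\Gamma$ has exactly three distinct eigenvalues and the smaller quotient-eigenvalue is negative, that eigenvalue must coincide with $s$; this recovers Proposition~\ref{propcone} and yields the identities $\theta_2=s$, $\theta_1=r$, $\theta_0=k-s$, and hence $N=-\theta_0\theta_2$.

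Next I would note that $\Gamma$ is a non-bipartite biregular graph in $\mathcal G(\theta_0,\theta_1,\theta_2)$: it is biregular because its two valencies are $N$ and $k+1<N$, and it is non-bipartite because it contains a triangle (the apex together with any edge of $\regSub$). Theorem~\ref{prop2val}(ii) then forces $\theta_0,\theta_1,\theta_2$, and therefore $\theta$, to be integers, so $\regSub$ is not a conference graph; moreover $\theta_1\ge1$ by Corollary~\ref{cor:compbi} and $\theta_2\le-2$, so $\regSub$ is a primitive strongly regular graph and the hypothesis $\theta\notin\{0,-1\}$ holds automatically. Writing $m=-\theta_2\ge2$ and $r=\theta_1\ge1$, I would then combine the standard strongly regular identities $rs=\mu-k$ and $k(k-1-\lambda)=(N-k-1)\mu$ with the cone relation $N=-\theta_0\theta_2$ to solve for the parameters of $\regSub$ explicitly; the outcome is the rigid family $k=r(m+1)$, $\mu=r$, $\lambda=2r-m$, and $N=m^2+rm(m+1)$.

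I would then split into three cases according to which eigenvalue $\theta$ equals. If $\theta=\theta_0$, then $N=\theta_0 m$ and $m=\theta_0-k\le\theta_0-1$ give $N\le\theta_0(\theta_0-1)$ directly, bounding $n$. If $\theta=\theta_2$, then $m$ is fixed and I would apply Neumaier's Theorem~\ref{thm:neuthm} to $\regSub$: any such graph with more than $C(m)$ vertices lies in family (i) or (ii), whose values of $\mu$ are $m^2$ and $m(m-1)$ respectively; since $\mu=r=\theta_1$, this forces $\theta_1\in\{m^2,m(m-1)\}$, so $N=m^2+rm(m+1)$ takes one of finitely many values, and together with the $N\le C(m)$ exceptions this bounds $n$. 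If $\theta=\theta_1$, then $r$ is fixed and I would pass to the complement $\overline{\regSub}$, which is primitive strongly regular with smallest eigenvalue $-(r+1)$ fixed and $\overline\mu=m(m-1)(r+1)$; applying Neumaier to $\overline{\regSub}$ and matching $\overline\mu$ against the family values $(r+1)^2$ and $(r+1)r$ forces $m(m-1)\in\{r+1,r\}$, hence at most finitely many $m$, again bounding $n$.

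The step I expect to be the main obstacle is the Diophantine matching in the two nontrivial cases: one must check that the rigid parameter family cut out by the cone condition meets each of Neumaier's two infinite families in only finitely many members, so that all but finitely many of Neumaier's graphs are excluded and only the genuine (bounded) exceptions survive. A secondary point requiring care is the primitivity of $\regSub$ and of $\overline{\regSub}$, which is precisely where the non-bipartite hypothesis is used—via $\theta_1\ge1$—to license the application of Theorem~\ref{thm:neuthm}.
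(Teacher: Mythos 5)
Your proposal is correct and follows essentially the same route as the paper: handle $\theta=\theta_0$ by a spectral-radius bound, and for $\theta\in\{\theta_1,\theta_2\}$ invoke Neumaier's Theorem~\ref{thm:neuthm} (passing to the complement for $\theta_1$) and check that the cone condition $\theta_2(k-\theta_2)=-n$ singles out only finitely many members of each infinite family. Your explicit derivation of the rigid parameter family $k=r(m+1)$, $\mu=r$, $\lambda=2r-m$ just makes precise the paper's ``it is easily checked that $s$ must equal either $\theta^2-\theta$ or $\theta^2-1$.''
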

\begin{proof} 
	If $\theta= \theta_0$, then the result follows from Lemma \ref{lem:rhobound}.
	
	Suppose that $\Gamma$ has at least $4$ vertices.
	By Theorem~\ref{thm:bell}, the multiplicities of both of the eigenvalues $\theta_1$ and $\theta_2$ are at least $2$.
	Hence, by interlacing, $\regSub$ has eigenvalues $k$, $\theta_1$, and $\theta_2$, for some $k$.
	
	First suppose $\theta = \theta_2$.
	By Theorem \ref{thm:neuthm}, there exists a constant $C(-\theta)$ such that $n \leqslant C(-\theta)$, otherwise, for a positive integer $s$, either $\regSub$ has parameters $\SRG(n, s(-\theta), s-1 + (-\theta-1)^2, \theta^2 )$ and $n = (s+1)(s(-\theta-1)-\theta)/(-\theta)$ or $\regSub$ has parameters
$\SRG((s+1)^2, -s\theta, s-1+(\theta +2)(\theta+1), \theta(\theta+1))$. 
	Since $\Gamma$ is the cone over $\regSub$, by Proposition~\ref{propcone}, the equation $\theta(k- \theta) = -n$ must be satisfied.
	It is easily checked that $s$ must equal either $\theta^2 - \theta$ or $\theta^2-1$ and hence $\regSub$ can be one of only finitely many graphs.

	Finally, suppose instead that the second largest eigenvalue of $\regSub$ is $\theta = \theta_1$.
	Since the complement of a strongly regular graph is also a strongly regular graph, we can apply the same argument as above where we consider $\regSub$ to be the complement of a strongly regular graph.
\end{proof}

Now we show that, there are only finitely many connected biregular graphs with three distinct eigenvalues and bounded second largest eigenvalue.

\begin{theorem}
	\label{thm:boundsec}
	Let $\Gamma$ be an $n$-vertex biregular graph in $\mathcal G(\theta_0,\theta_1,\theta_2)$ with valencies $k_1 > k_2$ and let $t$ be a positive integer.
	Then there exists a constant $C(t)$ such that if $0 < \theta_1 \leqslant t$, then  $n \leqslant C(t)$.
\end{theorem}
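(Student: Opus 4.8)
The plan is to reduce everything to bounding the smallest eigenvalue $\theta_2$, and then to treat the cone and non-cone cases separately. First observe that since $0 < \theta_1$, Corollary~\ref{cor:compbi} shows $\Gamma$ is non-bipartite, so by Theorem~\ref{prop2val}(ii) all three eigenvalues are integers; in particular $\theta_1 \in \{1,\dots,t\}$ takes only finitely many values. The key structural reduction is that $n$ is controlled by $\theta_1$ and $\theta_2$ alone: by Proposition~\ref{pro:bound2}(i) the maximal valency satisfies $k_{\max} \leqslant (1-(\theta_1+1)(\theta_2+1))^2 - \theta_1\theta_2$, and by Lemma~\ref{lem:rhobound} we have $\theta_0 < k_{\max}$ together with $n \leqslant \theta_0^2 + 1$. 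Hence a bound on $\theta_2$ (for each fixed $\theta_1$) immediately yields bounds on $k_{\max}$, $\theta_0$, and $n$, proving the theorem. So the entire problem collapses to: \emph{given $\theta_1 \in \{1,\dots,t\}$, bound $\theta_2$.}

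I would then split on whether $\Gamma$ is a cone over a strongly regular graph. If it is, then since each fixed value of $\theta_1$ is a nonzero integer (so $\theta_1 \neq 0,-1$) lying in the spectrum, Lemma~\ref{conesrg} applied with $\theta = \theta_1$ shows that $\Gamma$ is one of finitely many graphs; taking the union over $\theta_1 \in \{1,\dots,t\}$ keeps this finite. If $\Gamma$ is not a cone, then since $\theta_1 > 0$ forces $\Gamma$ to be non-complete-bipartite, the contrapositive of Theorem~\ref{thm:disconn} shows the complement of $\Gamma$ is connected, and Proposition~\ref{pro:bound2}(ii) gives the sharper bound $k_{\max} \leqslant \ell^2 - \theta_1\theta_2$. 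The remaining and essential task is to bound $\theta_2$ in this non-cone case.

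For that I would exploit the equitable valency partition from Theorem~\ref{prop2val}(i): writing the adjacency matrix in the block form $\left(\begin{smallmatrix} A_1 & B \\ B^{T} & A_2\end{smallmatrix}\right)$ relative to $\{V_1,V_2\}$, both induced subgraphs $\Gamma[V_1]$ and $\Gamma[V_2]$ are regular (with valencies $k_{11}$ and $k_{22}$). By Cauchy interlacing against the spectrum $\{\theta_0,\theta_1,\theta_2\}$ of $\Gamma$, every non-principal eigenvalue of $\Gamma[V_1]$ lies in $[\theta_2,\theta_1]$; consequently the regular complement $\overline{\Gamma[V_1]}$ has smallest eigenvalue at least $-1-\theta_1 \geqslant -(t+1)$, a quantity bounded purely in terms of $t$. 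The aim is to force one of these subconstituents (or its complement) to be strongly regular, so that Neumaier's Theorem~\ref{thm:neuthm} applies with the bounded parameter $t+1$, pinning the parameters into one of the two infinite families indexed by $s$; one then intersects those parameters with the global identities of Theorem~\ref{prop2val}(iv) and Proposition~\ref{pro:bound2}, exactly as in the endgame of Lemma~\ref{conesrg}, to bound $s$ and hence $\theta_2$.

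The main obstacle is precisely this last point: a subconstituent need not be strongly regular. Expanding the block identity coming from $(A-\theta_1 I)(A-\theta_2 I) = \alpha\alpha^{T}$ on $\mathbf 1^{\perp}$ gives $(\hat A_1 - \theta_1 I)(\hat A_1 - \theta_2 I) = -\hat B\hat B^{T}$ together with $\hat A_1\hat B + \hat B\hat A_2 = (\theta_1+\theta_2)\hat B$, where hats denote the restrictions to the subspaces orthogonal to the constant vectors on each part. These show that $\Gamma[V_1]$ has an eigenvalue strictly between $\theta_2$ and $\theta_1$ for each nonzero singular value of $\hat B$, so $\Gamma[V_1]$ is strongly regular exactly when $B$ has rank one, i.e.\ precisely in the degenerate cone/complete-bipartite configurations already excluded. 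The real work is therefore to rule out, for bounded $\theta_1$, an a priori unbounded family with growing $|\theta_2|$ in which neither subconstituent is strongly regular; I expect this to require combining the off-diagonal block relation above with the integrality constraints on $n_1,n_2,m_1,m_2$ from Theorem~\ref{prop2val}(iv) and the multiplicity restriction of Theorem~\ref{thm:bell}, and this Diophantine/structural step is the crux of the argument.
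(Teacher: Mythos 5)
Your reductions at the start are sound (integrality of $\theta_1$ via Theorem~\ref{prop2val}(ii), the cone case via Lemma~\ref{conesrg}, and the observation that a bound on $\theta_2$ for each fixed $\theta_1$ would finish via Proposition~\ref{pro:bound2} and Lemma~\ref{lem:rhobound}), but the proposal has a genuine gap at exactly the point you flag: in the non-cone case with $|\theta_2|$ large, your plan is to force a subconstituent to be strongly regular and invoke Neumaier's theorem, and you yourself observe that the subconstituents are strongly regular only in the degenerate rank-one configurations already excluded. No substitute argument is supplied; the ``Diophantine/structural step'' you defer is the entire content of the theorem, so as written this is not a proof.

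The paper closes this gap by a different mechanism, and notably it never bounds $\theta_2$ at all. It uses Theorem~\ref{prop2val}(iii), which you did not invoke: $\alpha_1\alpha_2$ equals either $-\theta_2(\theta_1+1)$ or $-\theta_1(\theta_2+1)$. In the first case, a non-adjacent pair $x\in V_1$, $y\in V_2$ (which exists since $\Gamma$ is not a cone) gives $\nu_{x,y}=\alpha_1\alpha_2\leqslant k_2=\alpha_2^2-\theta_1\theta_2$, whence $\alpha_2^2\geqslant -\theta_2$; in the second case an edge inside $V_2$ gives $\nu_{x,y}=\alpha_2^2+\theta_1+\theta_2\geqslant 0$, whence $\alpha_2^2\geqslant -\theta_2/2$. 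Thus every non-adjacent pair has at least roughly $-\theta_2$ common neighbours, and since the diameter is $2$ this yields $n< (2+t)k_1$ with $k_1\leqslant -\theta_2(t^2+3t+1)$. The factors of $-\theta_2$ then cancel in the multiplicity formula: $m_2=((n-1)\theta_1+\theta_0)/(\theta_1-\theta_2)\leqslant (t+1)^2(t^2+3t+1)$, a bound depending on $t$ alone even though $n$, $k_1$ and $-\theta_2$ may all be large. Theorem~\ref{thm:bell} (Bell--Rowlinson) then bounds $n$ by a polynomial in $m_2$. So the missing idea is to bound the \emph{multiplicity} of $\theta_2$ rather than $\theta_2$ itself; without something of this kind your outline cannot be completed along the route you describe.
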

\begin{proof} 
 	If $\theta_2 \geqslant -2t$, then the existence of $C(t)$ follows from Proposition~\ref{pro:bound2}.
	So from now on we will assume $\theta_2 < -2t$.

	From Theorem~\ref{prop2val}, the eigenvalue $\theta_1$ is an integer and without loss of generality we may assume that $\theta_1 = t$.
	If $\Gamma$ is a cone over a connected strongly regular graph, then the existence of $C(t)$ follows from Lemma \ref{conesrg}, hence from now on we assume that $\Gamma$ is not a cone over a strongly regular graph.

	By Theorem~\ref{prop2val} we have two cases to consider, namely the case  $\alpha_1 \alpha_2 = -\theta_2(\theta_1 +1)$ and the case $\alpha_1 \alpha_2 = -\theta_1(\theta_2 +1)$.
	Let us first consider the case $\alpha_1 \alpha_2 = -\theta_2(\theta_1 +1)$.
	Since $\Gamma$ is not a cone, there exist vertices $x$ and $y$ with respective valencies $k_1$ and $k_2$ such that $x \not \sim y$.
	Therefore $\nu_{x,y} = \alpha_1\alpha_2 \leqslant k_2 = \alpha_2^2- \theta_1\theta_2$.
	Using $\alpha_1 \alpha_2  = -\theta_2(\theta_1 +1)$, we see that $\alpha_2^2 \geqslant -\theta_2$ and hence $\alpha_1^2 \leqslant -(\theta_1+1)^2\theta_2 = -(t+1)^2 \theta_2$.
	Thus any two non-adjacent vertices have at least $\alpha_2^2 \geqslant -\theta_2 $ common neighbours.
	Hence, since $\Gamma$ has diameter $2$,
	$$ 
	n \leqslant 1 + k_2 + k_2(k_1-1)/\alpha_2^2 = 1 + k_2 + k_1 -1 -t \theta_2(k_1-1)/\alpha_2^2 < (2  + t)k_1.
	$$
	We also have
	$ k_1 = \alpha_1^2 - \theta_1 \theta_2 \leqslant -\theta_2(t^2 + 3t +1)$.

	Now we find that for the multiplicity $m_2$ the following holds:
	\[
		m_2 = \frac{(n-1)\theta_1 + \theta_0} {\theta_1 - \theta_2} < \frac{(2+t)k_1t + k_1}{-\theta_2} = \frac{(t+1)^2k_1}{-\theta_2}  \leqslant (t+1)^2(t^2 + 3t +1).
	\]
	Then Theorem~\ref{thm:bell} yields $n \leqslant  ((t+1)^2(t^2 + 3t +1)+1 )((t+1)^2(t^2 + 3t +1)+2)/2$. 
	This shows the existence of $C(t)$ in this case.

	It remains to consider the case $\alpha_1 \alpha_2  = -\theta_1(\theta_2 +1)$.
	If $k_{22} = 0$ then, by Theorem~\ref{prop2val}, $\alpha_1 \alpha_2 = -\theta_2(\theta_1 +1)$ which we have dealt with above.
	We therefore can assume that there exist $x,y \in V_2$ with $x \sim y$.
 	Hence $\nu_{x,y} = \alpha_2^2 + \theta_1 + \theta_2 \geqslant 0$.
	We conclude that $\alpha_2^2 \geqslant -\theta_2/2$.
	Now the bound follows in a fashion similar to the case $\alpha_1 \alpha_2  = -\theta_2(\theta_1 +1)$.
\end{proof}

Note that the above result is not true for connected graphs with exactly $4$ distinct eigenvalues and exactly two distinct valencies.
Indeed, the friendship graphs, i.e., cones over a disjoint union of copies of $K_2$, can have unbounded number of vertices and all but two of the eigenvalues are equal to $\pm 1$.

\subsection{Second largest eigenvalue 1}
\label{sec:sec1}
In this section we will determine the connected biregular graphs with three distinct eigenvalues and second largest eigenvalue $1$.
First we determine the cones of strongly regular graphs with second largest eigenvalue $1$.
Seidel~\cite{sei} (see also \cite[Thm 3.12.4 (i)]{bcn89}) classified the strongly regular graphs with smallest eigenvalue $-2$.

\begin{theorem}[\cite{sei}]
	\label{thm:Seidel}
	Let $\Gamma$ be a connected strongly regular graph with smallest eigenvalue $-2$.
	Then $\Gamma$ is either
		 a triangular graph $T(m)$ for $m \geqslant 5$;
		 an $(m \times m)$-grid for $m \geqslant 3$;
		 the Petersen graph;
		 the Shrikhande graph;
		 the Clebsch graph;
		 the Schl\"{a}fli graph;
		 or one of the three Chang graphs.

\end{theorem}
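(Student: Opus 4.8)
The plan is to turn the single spectral hypothesis into a statement about root lattices. Since the smallest eigenvalue is $-2$, the matrix $A + 2I$ (with $A$ the adjacency matrix of $\Gamma$) is positive semidefinite, so it has a Gram decomposition $A + 2I = N^\top N$. Reading off the columns of $N$ attaches to each vertex $x$ a vector $u_x$ with $\langle u_x, u_x \rangle = 2$ and $\langle u_x, u_y \rangle = A_{xy} \in \{0,1\}$ for $x \neq y$. Thus $\Gamma$ is faithfully represented by a set of norm-$2$ vectors whose pairwise inner products are $0$ or $1$, with adjacency corresponding to inner product $1$. The first step is to observe that such vectors behave like roots: the lattice they span is generated by norm-$2$ vectors and is closed under the associated reflections, so it is a root lattice.

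The heart of the argument is then to invoke the classification of root lattices. Because $\Gamma$ is connected, the vectors $u_x$ cannot split into mutually orthogonal families, so they lie in a single irreducible root system, which by the $ADE$ classification must be one of $A_m$, $D_m$, $E_6$, $E_7$, or $E_8$. This is exactly the content of the Cameron--Goethals--Seidel--Shult analysis of graphs with smallest eigenvalue at least $-2$, which I would cite: its conclusion is that either $\{u_x\}$ embeds in one of the infinite systems $A_m$ or $D_m$ --- equivalently, $\Gamma$ is a line graph --- or $\{u_x\}$ embeds in the exceptional $E_8$, in which case $\Gamma$ has only boundedly many vertices.

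The case analysis then finishes the classification. In the line-graph case one matches the representation with the underlying graph: the strongly regular line graphs are precisely $L(K_m) = T(m)$ (the triangular graphs, strongly regular for $m \geqslant 5$) and $L(K_{m,m})$ (the $m \times m$ grid for $m \geqslant 3$), so these account for the two infinite families. In the exceptional case the finitely many configurations inside $E_6$, $E_7$, and $E_8$ must be enumerated, and one retains those whose graph is strongly regular; this yields the sporadic graphs in the statement, namely the Petersen graph, the Shrikhande graph, the Clebsch graph, the Schl\"{a}fli graph, and the three Chang graphs.

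I expect two points to carry the real difficulty. The first is the reduction to irreducible root lattices: one must choose the Gram vectors so that their integer span is genuinely a root lattice and verify that connectedness of $\Gamma$ forces irreducibility, which is the conceptual core and the place where the root-system machinery does its work. The second, and most laborious, is the finite enumeration inside $E_8$ together with the need to separate the sporadic strongly regular graphs from members of the infinite families that share their parameters --- the Shrikhande graph is cospectral with the $4 \times 4$ grid and each Chang graph is cospectral with $T(8)$ --- so the eigenvalues alone cannot distinguish them and one is forced to argue at the level of the local combinatorial structure recorded by the root representation.
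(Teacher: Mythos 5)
The paper does not prove this theorem at all: it is imported verbatim from Seidel (the citation \cite{sei}, see also \cite[Thm 3.12.4 (i)]{bcn89}), so there is no internal argument to compare yours against. Your sketch is the standard modern proof of this classical fact --- the Cameron--Goethals--Seidel--Shult root-lattice method --- and it is sound in outline: the Gram decomposition of $A+2I$, the observation that the resulting norm-$2$ vectors generate a root lattice, irreducibility from connectedness, the $ADE$ classification, and the split into the line-graph case versus the finite $E_8$ enumeration are exactly the steps in the textbook treatment. (Historically Seidel's own argument predates this machinery and ran through parameter conditions and regular two-graphs, so your route is the later, more conceptual one; but since the paper only cites the result, that distinction is moot here.) Your closing remark about cospectral pairs (Shrikhande versus the $4\times4$ grid, the Chang graphs versus $T(8)$) correctly identifies why the enumeration must be done at the level of the root representation rather than of spectra.

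One concrete gap in your case analysis: representability in $D_m$ is equivalent to being a \emph{generalized} line graph, not a line graph, and among connected regular generalized line graphs the extra examples are precisely the cocktail party graphs $K_{n\times 2}$. These are connected strongly regular with smallest eigenvalue $-2$ (spectrum $\{[2n-2]^1,[0]^{n-1},[-2]^{n}\}$), so your assertion that the $A_m/D_m$ case yields only $T(m)$ and the grids is not quite right as stated; either $K_{n\times 2}$ must be added to the conclusion or the hypothesis must include primitivity (coconnectedness). The paper's own statement silently makes the same omission --- harmless for its application in Lemma 4.11, where only complements of the listed graphs are used and $\overline{K_{n\times 2}}$ is disconnected --- but a complete proof must account for this family, as well as for the degenerate small cases ($T(4)=K_{3\times 2}$, $L(K_{2,2})=C_4=K_{2,2}$) that explain the lower bounds $m\geqslant 5$ and $m\geqslant 3$ in the two infinite families.
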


\begin{lemma}\label{Petersen} Let $\Gamma$ be a cone over a strongly regular graph in $\mathcal G(\theta_0,1,\theta_2)$.
	Then $\Gamma$ is the Petersen cone.
\end{lemma}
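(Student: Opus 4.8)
The plan is to use the fact that the spectrum of a cone over a regular graph is almost completely determined by the spectrum of the base graph, and then to invoke Seidel's classification (Theorem~\ref{thm:Seidel}) via complementation.

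First I would record the eigenvalue structure of the cone. Write $\regSub$ for the strongly regular graph over which $\Gamma$ is a cone; since $\Gamma$ is not complete, $\regSub$ is a connected non-complete strongly regular graph $\SRG(n,k,\lambda,\mu)$ with eigenvalues $k>r>s$. Every eigenvector of $\regSub$ orthogonal to the all-ones vector extends (by a zero in the apex coordinate) to an eigenvector of $\Gamma$ with the same eigenvalue, so the restricted eigenvalues $r$ and $s$ are eigenvalues of $\Gamma$. The two remaining eigenvalues of $\Gamma$ come from the $2\times2$ quotient on $\{\mathrm{apex}\}\cup V(\regSub)$ and equal $\tfrac12\bigl(k\pm\sqrt{k^2+4n}\bigr)$; in particular the top one, $\theta_0$, exceeds $k$ and hence exceeds $r$. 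Therefore $\{r,s\}\subseteq\{\theta_1,\theta_2\}=\{1,\theta_2\}$, and as $r>s$ we get $r=\theta_1=1$ and $s=\theta_2$. Thus $\regSub$ is a connected non-complete strongly regular graph whose second largest eigenvalue is $1$.

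Next I would pass to the complement. Since $\regSub$ has second largest eigenvalue $1$, its complement $\overline{\regSub}$ is a connected strongly regular graph with smallest eigenvalue $-1-1=-2$ (connected because $r=1\neq0$, so $\regSub$ is not complete multipartite). Seidel's Theorem~\ref{thm:Seidel} then pins $\overline{\regSub}$ down to a triangular graph $T(m)$, an $(m\times m)$-grid, the Petersen, Shrikhande, Clebsch, or Schl\"afli graph, or a Chang graph. Finally I would impose that $\Gamma$ genuinely has three eigenvalues, i.e. Proposition~\ref{propcone}: $\theta_2(k-\theta_2)=-n$. Writing the eigenvalues of $\overline{\regSub}$ as $\bar k>\bar r>-2$, one has $k=n-1-\bar k$ and $\theta_2=-1-\bar r$, so the condition of Proposition~\ref{propcone} becomes
\[
	(1+\bar r)(n-\bar k+\bar r)=n.
\]
Substituting the known parameters of each graph on Seidel's list turns this into an elementary check: for the triangular graphs it reduces to $(m-2)(m-5)=0$, forcing $m=5$, so $\overline{\regSub}=T(5)$ and $\regSub$ is the Petersen graph; for the grids it reduces to $m^2-3m+1=0$, which has no integer solution; and each of the finitely many sporadic graphs fails the condition by direct substitution. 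Hence $\regSub$ is the Petersen graph and $\Gamma$ is the Petersen cone.

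The computations are all routine; the only points requiring care are the correct matching of the two ``new'' quotient eigenvalues of the cone against $r$ and $s$ (so that $\theta_1=1$ is genuinely the restricted eigenvalue $r$ of $\regSub$), and the bookkeeping of complementary parameters. I expect the main, though mild, obstacle to be verifying that the grid family and the sporadic exceptional graphs are all excluded, which amounts to checking the displayed Diophantine identity in each case rather than to any conceptual difficulty.
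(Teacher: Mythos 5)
Your proof is correct and takes essentially the same route as the paper: the paper likewise observes that the base graph must be a strongly regular graph with second largest eigenvalue $1$, hence the complement of a graph on Seidel's list (Theorem~\ref{thm:Seidel}), and then checks the condition of Proposition~\ref{propcone} for each candidate. You simply spell out in more detail the two steps the paper leaves implicit, namely why the restricted eigenvalues of the base graph must be $1$ and $\theta_2$, and the explicit Diophantine verifications.
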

\begin{proof} The strongly regular graphs with second largest eigenvalue $1$ are exactly the complements of the graphs in Theorem~\ref{thm:Seidel}.
	Checking whether each graph satisfies the condition of Proposition~\ref{propcone} gives the lemma.
\end{proof}

\begin{lemma}\label{lem:paramnonexi}
	There do not exist graphs with the following:
	\begin{enumerate}[(i)]
		\item valency-array $(15,15;20,8)$ and spectrum $\{ [16]^1, [1]^{20}, [-4]^9 \}$;
		\item valency-array $(12,39;35,14)$ and spectrum $\{ [22]^1, [1]^{41}, [-7]^9 \}$.
	\end{enumerate}
\end{lemma}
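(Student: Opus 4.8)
The plan is to derive a contradiction for each putative parameter set by assembling enough local combinatorial data—valencies, common-neighbour counts, and sizes of intersection sets between the two valency classes—to force an impossible count. Both candidates have $\theta_1 = 1$, and by Theorem~\ref{prop2val}(i) the valency partition $\{V_1,V_2\}$ is equitable with an explicitly computable quotient matrix $Q$, so the first step is to pin down all the local parameters. From $d_x = \alpha_x^2 - \theta_1\theta_2$ I would recover $\alpha_1$ and $\alpha_2$ for the two valencies (for (i), $\theta_1\theta_2 = -4$, so $\alpha_1^2 = 16$, $\alpha_2^2 = 4$, giving $\alpha_1 = 4$, $\alpha_2 = 2$; for (ii), $\theta_1\theta_2 = -7$, so $\alpha_1^2 = 28$, $\alpha_2^2 = 7$). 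Then $\nu_{x,y} = (\theta_1+\theta_2)A_{x,y} + \alpha_x\alpha_y$ gives the common-neighbour counts for each adjacency type, and the entries $k_{11}, k_{12}, k_{21}, k_{22}$ of $Q$ tell me exactly how many neighbours each vertex has in each class.

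With these in hand, I would focus on the regular subgraph $\Lambda$ induced on one of the valency classes, preferably the one whose induced graph is most constrained. The idea is that $\Lambda$ is a regular graph whose degree is $k_{ii}$; if its eigenvalues are forced (by interlacing, they must lie among $\theta_0, \theta_1, \theta_2$ together with possibly $k_{ii}$), then either $\Lambda$ is strongly regular with a determined parameter set, or the feasibility conditions on a strongly regular graph fail outright. The cleanest route is to compute the would-be strongly regular parameters $\SRG(n_i, k_{ii}, \lambda, \mu)$ of $\Lambda$ from the local data: $\lambda$ and $\mu$ are just the common-neighbour counts $\nu$ restricted to pairs within $V_i$, adjusted for neighbours lying outside $V_i$. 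I expect one of the standard integrality or divisibility conditions for strongly regular graphs—the integrality of the eigenvalue multiplicities via the formula $m = \tfrac12\big((n_i - 1) \mp \tfrac{2k_{ii} + (n_i-1)(\lambda - \mu)}{\sqrt{(\lambda-\mu)^2 + 4(k_{ii}-\mu)}}\big)$, or the absolute/Krein bound—to be violated.

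The most likely decisive obstruction, and the one I would try first, is a direct double-counting of edges or paths between $V_1$ and $V_2$. Since the partition is equitable, $k_{12}n_1 = k_{21}n_2$ must hold as an exact identity; I would verify this is consistent, and then count, for a fixed vertex $y \in V_2$, the number of vertices of $V_1$ at distance $\le 2$, using that $\Gamma$ has diameter $2$ so every vertex of $V_1$ not adjacent to $y$ shares exactly $\nu$ common neighbours with it. Summing $\nu_{x,y}$ over all $x \in V_1$ in two ways—once as $\sum_x \nu_{x,y}$ via the adjacency/non-adjacency split, and once as a count of paths of length $2$ from $y$ through $V_1$'s neighbours—produces a linear relation that the candidate parameters must satisfy; I expect it to fail, or to force a fractional count of some intersection number.

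\textbf{The main obstacle} will be choosing which of these constraints actually breaks, rather than carrying out any single one: all the gross numerical data (valencies, multiplicities, quotient matrix) are manifestly integral and self-consistent by the very definition of \emph{feasible}, so the contradiction must come from a finer structural invariant—most plausibly the non-existence of the strongly regular graph $\Lambda$ that the local parameters would require, or a Krein/absolute-bound violation that only surfaces after the induced subgraph's full parameter set is computed. I would therefore be prepared, if the edge-counting identity holds, to fall back on identifying $\Lambda$ explicitly (via Theorem~\ref{thm:Seidel} or Neumaier's classification, Theorem~\ref{thm:neuthm}, if $\theta_2$ is small enough) and checking that no graph on that parameter list admits the required attachment to $V_2$.
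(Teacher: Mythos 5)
Your setup is right (the $\alpha_i$ and, via Theorem~\ref{prop2val}(i), the quotient matrices $\left(\begin{smallmatrix}12&8\\8&0\end{smallmatrix}\right)$ for (i) and $\left(\begin{smallmatrix}9&26\\8&6\end{smallmatrix}\right)$ for (ii)), but the proposal never actually lands a contradiction, and the step you flag as ``most likely decisive'' cannot work. Summing $\nu_{x,y}$ over all $x\in V_1$ for fixed $y$ and comparing with a path count is an identity that follows from $(A-\theta_1I)(A-\theta_2I)=\alpha\alpha^{\top}$ together with the equitability of the valency partition; as you yourself note, everything of this global, averaged kind is consistent by the definition of feasibility, so no contradiction can come from there. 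The obstruction the paper uses for (ii) is strictly local and much cruder than anything in your plan: for $x\sim y$ both in $V_2$ one has $\nu_{x,y}=(\theta_1+\theta_2)+\alpha_2^2=-6+7=1$, yet each of $x,y$ has $k_{21}=8$ neighbours inside $V_1$, which has only $12$ vertices, so they already share at least $8+8-12=4$ common neighbours there (and $k_{22}=6>0$ guarantees such a pair exists). Your phrase about ``adjusting $\lambda$ and $\mu$ for neighbours lying outside $V_i$'' is adjacent to this, but the adjustment is not a computable constant --- it is only bounded below by inclusion--exclusion, and that lower bound versus the exact value of $\nu_{x,y}$ \emph{is} the whole proof; without stating that inequality the argument is not there.

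For (i) the gap is larger. The paper simply cites Van Dam for the nonexistence of a graph with spectrum $\{[16]^1,[1]^{20},[-4]^9\}$ and that quotient matrix. Your primary fallback --- reading off strongly regular parameters for the induced subgraph $\regSub$ on a valency class and checking SRG feasibility --- does not apply as stated: here $V_2$ induces the empty graph (no information), and the $12$-regular graph induced on $V_1$ is not forced to be strongly regular by interlacing (only $\eta_2\leqslant 1$ and $\eta_{15}\geqslant -4$ come for free). A working elementary argument does exist along the lines of your second fallback, but it needs the sharper form of interlacing: since $\Gamma$ has eigenvalue $1$ in positions $2$ through $21$ of its spectrum, the $15$-vertex induced subgraph on $V_1$ has $\eta_2=\dots=\eta_6=1$, so its complement --- a $2$-regular graph, i.e.\ a disjoint union of cycles on $15$ vertices --- would need eigenvalue $-2$ with multiplicity at least $5$, forcing at least five even cycles and hence at least $20$ vertices. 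That multiplicity count is the missing idea; ``check the Krein or absolute bound'' will not find it.
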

\begin{proof}
	The valency partitions corresponding to the graphs satisfying (i) and (ii) have quotient matrices
	\[
		\begin{pmatrix}
				12 & 8 \\
				8 & 0
		\end{pmatrix}
		 \quad \text{ and } \quad
		\begin{pmatrix}
			9 & 26 \\
			8 & 6
			\end{pmatrix},
	\]
	respectively.
	The first case was shown to be impossible by Van Dam~\cite{Dam3ev}.
	In the second case, for any two vertices $x \sim y$ in $V_2$, we have $\nu_{x,y} =1$, but this is impossible as both are adjacent to $8$ out of the $12$ vertices in $V_1$.
\end{proof}

\begin{prop}\label{secondev1}
	Let $\Gamma$ be a biregular graph in $\mathcal G(\theta_0,1,-t)$.
	Then $t=2$, and $\Gamma$ is the Petersen cone or Van Dam-Fano graph.
\end{prop}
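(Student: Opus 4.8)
The plan is to make the standard reductions, then collapse the two arithmetic cases of Theorem~\ref{prop2val}(iii) into one, bound everything to a finite list of candidate spectra, and finally eliminate all but the two claimed graphs. First I would note that since $\theta_1 = 1 \neq 0$, Corollary~\ref{cor:compbi} shows $\Gamma$ is non-bipartite; hence by Theorem~\ref{prop2val}(ii) all eigenvalues are integers, so $t := -\theta_2$ is a positive integer, and $t \geqslant 2$ because $\theta_2 \leqslant -\sqrt{2}$. If $\Gamma$ is a cone then, being biregular, it is a cone over a strongly regular graph by Proposition~\ref{pro:biregcones}, and Lemma~\ref{Petersen} forces $\Gamma$ to be the Petersen cone, for which $t = 2$. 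So from now on I assume $\Gamma$ is not a cone over a strongly regular graph, and aim to show $\Gamma$ is the Van Dam-Fano graph and $t = 2$.

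Next I would reduce to a single case. With $\theta_1 = 1$ and $\theta_2 = -t$, Theorem~\ref{prop2val}(iii) gives either $\alpha_1\alpha_2 = -\theta_2(\theta_1+1) = 2t$ (Case~A) or $\alpha_1\alpha_2 = -\theta_1(\theta_2+1) = t-1$ (Case~B). I would dispose of Case~B as follows: if $k_{22} = 0$ then Theorem~\ref{prop2val}(iii) already places us in Case~A, so assume $k_{22} \neq 0$ and choose adjacent $x, y \in V_2$. Then $0 \leqslant \nu_{x,y} = \theta_1 + \theta_2 + \alpha_2^2 = \alpha_2^2 + 1 - t$ forces $\alpha_2^2 \geqslant t-1$, while $\alpha_2^2 < \alpha_1\alpha_2 = t-1$ since $\alpha_1 > \alpha_2$, a contradiction. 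Hence we may assume $\alpha_1\alpha_2 = 2t$.

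I would then extract the key inequalities exactly as in the proof of Theorem~\ref{thm:boundsec}: since $\Gamma$ is not a cone there are non-adjacent $x \in V_1$, $y \in V_2$, so $2t = \alpha_1\alpha_2 = \nu_{x,y} \leqslant k_2 = \alpha_2^2 + t$, giving $\alpha_2^2 \geqslant t$ and hence $\alpha_1^2 = (\alpha_1\alpha_2)^2/\alpha_2^2 \leqslant 4t$; combined with Theorem~\ref{prop2val}(vii) this confines $\alpha_1^2,\alpha_2^2,\alpha_1\alpha_2$ to narrow integer ranges. These inequalities are scale-invariant in $t$ and so do not by themselves bound $t$, so the finiteness must instead come from Theorem~\ref{thm:boundsec} applied with second-largest eigenvalue $1$, which bounds $n$ by an absolute constant. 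Feeding this bound into the explicit formulas of Theorem~\ref{prop2val}(iv) and the integrality of the multiplicities from Eq.~\eqref{mult} reduces the admissible non-cone spectra with $\theta_1 = 1$ to a short finite list, which I expect to match the relevant rows of the appendix table, leaving exactly the valency-arrays $(7,7;10,4)$ with spectrum $\{[8]^1,[1]^6,[-2]^7\}$, $(15,15;20,8)$, and $(12,39;35,14)$.

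Finally I would eliminate the spurious candidates: Lemma~\ref{lem:paramnonexi} shows that no graph has valency-array $(15,15;20,8)$ (with $t=4$) or $(12,39;35,14)$ (with $t=7$), so $t = 2$ and $\Gamma$ must realize $(7,7;10,4)$. As such a graph has only $14 \leqslant 29$ vertices, Van Dam's classification of all three-eigenvalue graphs on at most $29$ vertices~\cite{Dam3ev} identifies it uniquely as the Van Dam-Fano graph, completing the proof. The main obstacle is precisely the third step: turning the scale-free inequalities into a genuinely finite enumeration requires the global vertex bound of Theorem~\ref{thm:boundsec} together with the integrality of $n_1,n_2,m_1,m_2$, and one must verify carefully that no feasible array for an intermediate value of $t$ has been overlooked before appealing to Lemma~\ref{lem:paramnonexi}.
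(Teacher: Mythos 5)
Your proposal is correct and follows essentially the same route as the paper's proof: the same dichotomy from Theorem~\ref{prop2val}(iii), the same elimination of the case $\alpha_1\alpha_2 = t-1$ via a negative $\nu_{x,y}$ on an edge inside $V_2$, the same inequality $\alpha_2^2 \geqslant t$ extracted from a non-adjacent pair $x \in V_1$, $y \in V_2$, and the same endgame via a finite enumeration, Lemma~\ref{lem:paramnonexi}, and Van Dam's classification of graphs on at most $29$ vertices. The only real divergence is the finiteness step: instead of importing the constant $C(1)$ from Theorem~\ref{thm:boundsec} (which yields roughly $n \leqslant 231$ and would push the enumeration well beyond the appendix's $100$-vertex table, leaving the range you would still have to check by hand), the paper continues the computation inline --- from $\alpha_1\alpha_2 = 2t$ and $\alpha_2^2 \geqslant t$ it gets $4t+1 \leqslant \alpha_1^2+\alpha_2^2 \leqslant 5t$, hence $n < 8t$ via Theorem~\ref{prop2val}(iv), and the eigenvector equation gives $\theta_0 \leqslant k_2\alpha_1/\alpha_2 \leqslant 4t$, so $m_2 < 12$ and $n \leqslant 78$ by Theorem~\ref{thm:bell} --- which is sharper and keeps the search inside the published table.
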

\begin{proof}
	Suppose $\Gamma$ has valencies $k_1 > k_2$.
	By Theorem \ref{prop2val}, we have $\alpha_1 \alpha_2=t-1$ or $\alpha_1 \alpha_2=2t$.
Let us first consider the case $\alpha_1 \alpha_2=t-1$.
Let $v,w \in V_2$ and $v\sim w$.
Then we have $\nu_{v,w}=1-t+\alpha_2^2<1-t+\alpha_1\alpha_2=0$.
Hence we must have $k_{22}=0$, but then, by Theorem \ref{prop2val}, we must have $\alpha_1 \alpha_2 = 2t$, contradicting that we are in case  $\alpha_1 \alpha_2=t-1$.

Now let us consider the other case $\alpha_1 \alpha_2=2t$.
If $k_{12} =n_2$ or $k_{21}=n_1$, then by Theorem~\ref{prop2val} and Lemma~\ref{Petersen} the graph, $\Gamma$ is the Petersen cone.
Hence we can assume that there are vertices $x \in V_1$ and $y \in V_2$ such that $x \nsim y$.
By Lemma~\ref{lem:bound1}, it follows that $(\alpha_1-\alpha_2)\alpha_2 \leqslant t$, and this implies that $\alpha_2^2 \geqslant t$, as $\alpha_1 \alpha_2 = 2t$.

%
%
Using the fact that $\alpha_1\alpha_2 = 2t$, it follows from Theorem~\ref{prop2val} that
\begin{equation}\label{10}
n= \frac{(\theta_0-1)(\alpha_1^2+\alpha_2^2+3t-\theta_0)}{2t}
\end{equation}
holds.

Since $\alpha_1\alpha_2=2t$ , and $\alpha_2^2 \geqslant t$, it follows that $4t+1 \leqslant \alpha_1^2+\alpha_2^2 \leqslant 5t$ and hence, by Eq. \eqref{10}, we have $n < 8t$.
 By Theorem \ref{prop2val} we have $\theta_0 \leqslant k_2 \alpha_1/\alpha_2 = \alpha_1 \alpha_2 +t \alpha_1/\alpha_2 \leqslant 2t + 2t = 4t$.
Therefore $m_2=(n-1+\theta_0)/(1+t)< (8t-1+4t)/(1+t)<12$ and by Theorem~\ref{thm:bell} we deduce $n \leqslant 12 \cdot 13/2 = 78$.

Now, using Theorem~\ref{prop2val} and Lemma~\ref{lem:th2bound} we can compute all the feasible valency-arrays and spectra for graphs satisfying these conditions (see the appendix).
We obtain four valency-arrays and spectra corresponding to the Petersen cone, the Van Dam-Fano graph, and the two valency-arrays and spectra from Lemma~\ref{lem:paramnonexi}.
\end{proof}

Now we can strengthen Theorem~\ref{prop2val} further.

\begin{theorem}\label{thm:k21}
	Let $\Gamma$ be a non-bipartite biregular graph in $\mathcal G(\theta_0,\theta_1,\theta_2)$ with valencies $k_1 > k_2$ whose valency partition has quotient matrix $\left ( \begin{smallmatrix}
	k_{11} & k_{12}\\
	k_{21} & k_{22}
	\end{smallmatrix} \right )$.
	The following are equivalent:
	\begin{enumerate}[(i)]
		\item $k_{21} = 1$;
		\item $k_{21} = n_1$;
		\item $k_{12} = n_2$;
		\item $\Gamma$ is a cone over a strongly regular graph.
	\end{enumerate}
\end{theorem}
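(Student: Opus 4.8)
The plan is to show that the new condition (i) is equivalent to the three conditions already shown equivalent in Theorem~\ref{prop2val}(v), where (ii), (iii), (iv) and ``$n_1=1$'' all coincide. The implication $(iv)\Rightarrow(i)$ is immediate: a biregular cone is a cone over a strongly regular graph by Proposition~\ref{pro:biregcones}, its apex is the unique vertex of valency $k_1=n-1$, so $V_1$ is the single apex ($n_1=1$) and every vertex of $V_2$ is adjacent to it, whence $k_{21}=1$. Conversely, if $n_1=1$ then $1\leqslant k_{21}\leqslant n_1=1$ forces $k_{21}=1$. Thus the whole theorem reduces to proving $(i)\Rightarrow n_1=1$; so I would assume $k_{21}=1$ and, for contradiction, $n_1\geqslant 2$.

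\textbf{Structural core.} The key observation is that $k_{21}=1$ confines common neighbours: if $y\in V_2$ were adjacent to two vertices of $V_1$ it would have at least two neighbours in $V_1$, contradicting $k_{21}=1$. Hence any two vertices $x,x'\in V_1$ have \emph{all} their common neighbours inside $V_1$. Since $\nu_{x,x'}=(\theta_1+\theta_2)A_{x,x'}+\alpha_1^2$, this makes the induced graph $\Gamma[V_1]$ strongly regular with $\mu=\alpha_1^2$ and $\lambda=\alpha_1^2+\theta_1+\theta_2$ (the empty graph is impossible because $\nu_{x,x'}=\alpha_1^2>0$; the complete case I would treat separately). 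Dually, for $x\in V_1$ adjacent to a vertex $y$ of valency $k_2$, every one of the $\nu_{x,y}=\theta_1+\theta_2+\alpha_1\alpha_2$ common neighbours of $x$ and $y$ lies among the neighbours of $x$ in $V_2$ (a common neighbour in $V_1$ would be the unique $V_1$-neighbour $x$ of $y$). As $y$ is itself such a neighbour, this yields the clean inequality
\[
\theta_1+\theta_2+\alpha_1\alpha_2\ \leqslant\ k_{12}-1 .
\]

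\textbf{First contradiction.} For non-complete $\Gamma[V_1]$ the strongly-regular inequality $\mu\leqslant k_{11}$ reads $\alpha_1^2\leqslant k_1-k_{12}$, i.e. $k_{12}\leqslant -\theta_1\theta_2$ (using $d_x=\alpha_x^2-\theta_1\theta_2$). Combining with the star inequality above gives $\alpha_1\alpha_2\leqslant -(\theta_1+1)(\theta_2+1)$. By Theorem~\ref{prop2val}(iii), $\alpha_1\alpha_2=-\theta(\theta'+1)$ where $\theta$ is the second eigenvalue of the quotient matrix $Q$ and $\{\theta,\theta'\}=\{\theta_1,\theta_2\}$. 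A one-line comparison shows $-\theta(\theta'+1)\leqslant-(\theta+1)(\theta'+1)$ is equivalent to $\theta'\leqslant-1$; hence the displayed bound is \emph{violated} exactly when $\theta=\theta_2$ (in particular whenever $k_{11}=0$ or $k_{22}=0$), disposing of that case.

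\textbf{The crux.} The only surviving case is $\theta=\theta_1$, and I expect this to be the main obstacle, since there the easy inequalities above are consistent. Here Theorem~\ref{prop2val}(iii) forces $k_{11},k_{22}\geqslant 1$; an edge inside $V_2$ gives $\nu=\theta_1+\theta_2+\alpha_2^2\geqslant 0$, so $\alpha_2^2\geqslant-(\theta_1+\theta_2)$, while $\alpha_2<\alpha_1$ gives $\alpha_2^2<\alpha_1\alpha_2=-\theta_1(\theta_2+1)$. These trap $\alpha_2^2$ in a short interval (forcing $\theta_1\geqslant 2$ and leaving only finitely many integral values of $\alpha_1^2,\alpha_2^2$ for each spectrum). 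I would then use the characteristic equation $\det(Q-\theta_1 I)=0$, i.e. $(k_{11}-\theta_1)(k_{22}-\theta_1)=k_{12}$, to solve for $k_{11}$ and, with the lower bound on $\alpha_2^2$, show $k_{11}<\alpha_1^2=\mu$, contradicting $\mu\leqslant k_{11}$ once more. The complete-graph case for $\Gamma[V_1]$, where $k_{12}=-(\theta_1+1)(\theta_2+1)$, would be closed by the analogous direct count of edges between the stars. The delicate point throughout is that no purely algebraic identity settles the case $\theta=\theta_1$: one must genuinely combine the combinatorial bound $\alpha_2^2\geqslant-(\theta_1+\theta_2)$ with the exact entries of $Q$.
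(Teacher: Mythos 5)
Your reduction to proving $(i)\Rightarrow n_1=1$ is the same as the paper's, but your route for that implication is genuinely different. The paper fixes a vertex $x\in V_2$ and computes $\sum_{y\in V_1}\nu_{x,y}$ in two ways (using that $x$ and each of its $k_{22}$ neighbours in $V_2$ have exactly one neighbour in $V_1$), obtaining the exact identity $n_1\alpha_1\alpha_2=\alpha_2^2-(\theta_1+1)(\theta_2+1)+k_{11}$; feeding in $k_{11}\leqslant n_1-1$, $\alpha_2^2\leqslant\alpha_1\alpha_2-1$ and the two possible values of $\alpha_1\alpha_2$ from Theorem~\ref{prop2val}(iii) gives $n_1<2$ when $\alpha_1\alpha_2=-\theta_2(\theta_1+1)$ and $n_1<3$ otherwise, with $n_1=2$ excluded by a sign check on $\det Q$. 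That single count never needs to distinguish whether the subgraph induced on $V_1$ is complete, which is what your case analysis buys it. Your local inequalities are all correct: common neighbours of two vertices of $V_1$ do lie in $V_1$; the bound $\alpha_1^2\leqslant k_{11}$ (equivalently $k_{12}\leqslant-\theta_1\theta_2$) combined with $\theta_1+\theta_2+\alpha_1\alpha_2\leqslant k_{12}-1$ does force $\alpha_1\alpha_2\leqslant-(\theta_1+1)(\theta_2+1)$, which is incompatible with $\alpha_1\alpha_2=-\theta_2(\theta_1+1)$. Your crux case also closes along the lines you indicate, in fact more easily than you fear: from $(k_{11}-\theta_1)(k_{22}-\theta_1)=k_{12}$ and $k_{22}-\theta_1=\alpha_2^2+\alpha_1\alpha_2-1\geqslant\alpha_1\alpha_2$ one gets $k_{11}\leqslant\theta_1+(-\theta_1\theta_2)/(-\theta_1\theta_2-\theta_1)=\theta_1+(-\theta_2)/(-\theta_2-1)\leqslant\theta_1+3/2$, hence $k_{11}\leqslant\theta_1+1$, while $\alpha_1^2\geqslant\alpha_1\alpha_2+1=-\theta_1\theta_2-\theta_1+1\geqslant2\theta_1+1$, contradicting $\alpha_1^2\leqslant k_{11}$; only the crude bound $\alpha_2^2\geqslant1$ is needed.

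Two points need attention before this is a proof. First, you must dispose of $\theta_1=1$ (Proposition~\ref{secondev1}) and $\theta_2\geqslant-2$ (Van Dam's classification) at the outset, as the paper does; the estimates above genuinely use $\theta_1\geqslant2$ and $\theta_2\leqslant-3$. Second, and more substantively, the case where the subgraph induced on $V_1$ is complete is not closed by what you write. There $k_{11}=n_1-1$, $n_1=\alpha_1^2+\theta_1+\theta_2+2$ and $k_{12}=-(\theta_1+1)(\theta_2+1)$ as you say, but the ``analogous direct count'' for a non-adjacent pair $x\in V_1$, $y\in V_2$ only yields $\alpha_1\alpha_2\leqslant k_{12}+1$, which rules out $\alpha_1\alpha_2=-\theta_2(\theta_1+1)$ yet is consistent with $\alpha_1\alpha_2=-\theta_1(\theta_2+1)$ for every $\theta_1\geqslant2$, $\theta_2\leqslant-3$. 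You need the characteristic equation here as well: it reads $(\alpha_1^2+\theta_2+1)(\alpha_2^2+\alpha_1\alpha_2-1)=-(\theta_1+1)(\theta_2+1)$, and since $\alpha_1^2+\theta_2+1\geqslant2$ would make the left side at least $2\alpha_1\alpha_2=-2\theta_1(\theta_2+1)>-(\theta_1+1)(\theta_2+1)$, one gets $\alpha_1^2=-\theta_2$ and then $\alpha_2^2=-\theta_2=\alpha_1^2$, contradicting $\alpha_1>\alpha_2$. With that supplied your argument is complete, though noticeably longer than the paper's single counting identity.
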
 
\begin{proof}
	Assume (i) holds.
	Let $x$ be a vertex of valency $k_2 = k_{21} + k_{22}$.
	Since $k_{21} = 1$, each vertex in $V_2$ is adjacent to precisely one vertex in $V_1$.
	Hence the sum $\sum_{y \in V_1} \nu_{x,y}$ is equal to $k_{11} + k_{22}$. 
	Therefore $k_2 + k_{11} = 1+\sum_{y \in V_1} \nu_{x,y} = n_1 \alpha_1 \alpha_2 + \theta_1 + \theta_2 + 1$.
	We also have that $k_2 = \alpha_2^2 - \theta_1\theta_2$.
	It follows that $n_1 = (\alpha_2^2 - (\theta_1 + 1)(\theta_2 + 1) + k_{11})/\alpha_1\alpha_2$.
	By Proposition~\ref{secondev1} we can assume that $\theta_1 \geqslant 2$ and, since Van Dam has classified such graphs with $\theta_2 \geqslant -2$, we can assume that $\theta_2 \leqslant -3$.
	Using these assumptions together with the inequality $k_{11} \leqslant n_1-1$, we can write 
	\begin{equation}
		\label{eqn:ineqn1}
		n_1 \leqslant \frac{\alpha_2^2 - (\theta_1+1)(\theta_2+1) - 1}{\alpha_1\alpha_2 - 1}.
	\end{equation}
	First observe that $\alpha_2^2 \leqslant \alpha_1\alpha_2 - 1$.
	By Theorem~\ref{prop2val} (iii), either $\alpha_1\alpha_2 = -\theta_2(\theta_1+1)$ or $\alpha_1\alpha_2 = -\theta_1(\theta_2+1)$.
	Applying \eqref{eqn:ineqn1} with $\alpha_1\alpha_2 = -\theta_2(\theta_1+1)$ gives $n_1 < 2$ and hence $n_1 = 1$, giving (ii).
	Otherwise, applying \eqref{eqn:ineqn1} with $\alpha_1\alpha_2 = -\theta_1(\theta_2+1)$ gives $n_1 < 3$.
	If $n_1 = 2$ then $\det Q = \theta_0 \theta_1 > 0$.
	But in this case $k_{11} \in \{0,1\}$ and for either value of $k_{11}$ we obtain $\det Q < 0$, a contradiction.
	Therefore $n_1 = 1$, as in (ii).
	The rest follows from the proof of Theorem~\ref{prop2val}.
\end{proof}

\section{Biregular graphs from designs} 
\label{sec:graphs_from_designs}


The results of this section link some highly structured graphs to certain designs.
A set $V$ of cardinality $v$ together with a collection of $k$-subsets $B_1,\dots,B_b$ (called blocks) of $V$ is called a $(v,b,r,k)$-\textbf{configuration} if each point of $V$ occurs in precisely $r$ blocks.
A $(v,b,r,k)$-configuration $D$ is called a \textbf{group divisible} design (GDD) if its $v$ points can be partitioned into $m$ sets $T_1, \dots, T_m$ each of size $n \geqslant 2$ such that any two points $x \in T_i$ and $y \in T_j$ occur together in $\lambda_1$ blocks if $i=j$ and $\lambda_2$ blocks otherwise.
We say that $D$ is a GDD with parameters $(v,b;r,k;\lambda_1,\lambda_2;m,n)$ and in Table~\ref{tab:fea} we write $\GDD(v,b;r,k;\lambda_1,\lambda_2;m,n)$.
(We refer to Bose~\cite{Bos77} for more details about GDDs.)
Note that if $n=v$ then $D$ is a $2$-design with parameters $(v,b,r,k,\lambda_1)$ or, for short, a $(v,k,\lambda_1)$-design.

\begin{theorem}\label{thm:2-des}
	Let $\Gamma$ be a graph in $\mathcal G(\theta_0,\theta_1,\theta_2)$ 
	such that its adjacency matrix $A$ under the valency partition has block form:
	\[
	A =
	\begin{pmatrix}
	A_1 & B_1\\
	B_2 & A_2
	\end{pmatrix}
	\text{~~with quotient matrix~~}
	\begin{pmatrix}
	k_{11} & k_{12}\\
	k_{21} & k_{22}
	\end{pmatrix}
	\] 
	and set $n_i = |V_i|$.
	\begin{enumerate}[(i)]
		\item Suppose that, for some $i \in \{1,2\}$, the induced subgraph on $V_i$ is a complete $m$-partite graph for some $m$.
	Set $\lambda_1 = \alpha_i^2-k_{ii}$ and $\lambda_2 = \alpha_i^2+\theta_1+\theta_2-k_{ii}+n_i/m$.
	
		\item Suppose that, for some $i \in \{1,2\}$, the induced subgraph on $V_i$ is the disjoint union of cliques $K_{n_i/m}$ for some $m$.
	Set $\lambda_1 = \alpha_i^2+\theta_1+\theta_2 + 1-k_{ii}$ and $\lambda_2 = \alpha_i^2$.
	\end{enumerate}
	In either of the above cases, set $j$ such that $\{i,j\} = \{1,2\}$, then the matrix $B_i$ 
	is an incidence matrix of a GDD with parameters $(n_i,n_j;k_{ij},k_{ji};\lambda_1,\lambda_2;m,n_i/m)$.
\end{theorem}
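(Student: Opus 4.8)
The plan is to interpret the off-diagonal block $B_i$ directly as an incidence matrix: I take the point set to be $V_i$ and the block set to be $V_j$, declaring that a point $x \in V_i$ is incident to a block $y \in V_j$ precisely when $x \sim y$ in $\Gamma$. With this convention the rows of $B_i$ are indexed by points, the columns by blocks, and its entries record exactly this incidence. First I would check the configuration data. Since the valency partition $\{V_1,V_2\}$ is equitable (Theorem~\ref{prop2val}), every vertex of $V_i$ has exactly $k_{ij}$ neighbours in $V_j$ and every vertex of $V_j$ has exactly $k_{ji}$ neighbours in $V_i$; hence each of the $v = n_i$ points lies in $r = k_{ij}$ of the $b = n_j$ blocks, and each block contains $k = k_{ji}$ points. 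This already exhibits $B_i$ as a $(n_i,n_j;k_{ij},k_{ji})$-configuration, and I would take the $m$ groups $T_1,\dots,T_m$ to be the parts of the complete $m$-partite graph in case (i), respectively the vertex sets of the cliques $K_{n_i/m}$ in case (ii); in both cases each group has the required size $n_i/m$.

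The heart of the argument is to show that the number of blocks containing two distinct points $x,x' \in V_i$ depends only on whether they lie in a common group, taking the value $\lambda_1$ when they do and $\lambda_2$ when they do not. The number of such blocks is exactly the number of common neighbours of $x$ and $x'$ that lie in $V_j$. Writing $\nu_{x,x'}$ for the total number of common neighbours, the relation $\nu_{x,x'} = (\theta_1+\theta_2)A_{x,x'} + \alpha_x\alpha_{x'}$ from Section~\ref{sec:prelim} evaluates this total; since $x,x' \in V_i$ we have $\alpha_x = \alpha_{x'} = \alpha_i$, so $\nu_{x,x'} = (\theta_1+\theta_2)A_{x,x'} + \alpha_i^2$. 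Subtracting the common neighbours of $x$ and $x'$ that lie \emph{inside} $V_i$ --- a quantity read off from the known structure of the induced subgraph --- then yields the number of blocks through $x$ and $x'$.

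The main work, and the only place requiring care, is this last subtraction, which I would carry out according to the two cases and the two positions of $x,x'$. In case (i) the induced graph on $V_i$ is $k_{ii}$-regular with $k_{ii} = n_i - n_i/m$: two points in a common part are non-adjacent and share precisely the $k_{ii}$ vertices outside their part as common neighbours within $V_i$, producing $\lambda_1 = \alpha_i^2 - k_{ii}$; two points in distinct parts are adjacent and share the $k_{ii} - n_i/m$ vertices outside both parts, producing $\lambda_2 = \alpha_i^2 + \theta_1 + \theta_2 - k_{ii} + n_i/m$. In case (ii) the induced graph is $(n_i/m - 1)$-regular: two points in a common clique are adjacent and share the remaining $n_i/m - 2$ vertices of that clique, giving $\lambda_1 = \alpha_i^2 + \theta_1 + \theta_2 + 1 - k_{ii}$; two points in distinct cliques are non-adjacent and have no common neighbour within $V_i$, giving $\lambda_2 = \alpha_i^2$. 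Because each count depends only on $A_{x,x'}$ and on whether $x$ and $x'$ share a group, the numbers $\lambda_1$ and $\lambda_2$ are genuine constants, and $B_i$ is therefore the incidence matrix of a GDD with the stated parameters $(n_i,n_j;k_{ij},k_{ji};\lambda_1,\lambda_2;m,n_i/m)$. I expect no serious obstacle beyond bookkeeping these within-$V_i$ counts correctly and confirming that they reproduce exactly the asserted values of $\lambda_1$ and $\lambda_2$.
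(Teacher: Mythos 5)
Your proposal is correct and follows essentially the same route as the paper: count common neighbours via $\nu_{x,y}=(\theta_1+\theta_2)A_{x,y}+\alpha_i^2$ and subtract the common neighbours inside $V_i$, which are determined by the known structure of the induced subgraph. You are in fact slightly more careful than the paper's own write-up, which (in case (i)) states the adjacency condition for the complete multipartite graph backwards even though its final values of $\lambda_1$ and $\lambda_2$ agree with your (correct) convention, and you also make explicit the verification of the configuration parameters via equitability of the valency partition, which the paper leaves implicit.
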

\begin{proof} 
	Suppose we are in the first case of the theorem, so that the induced subgraph, $K$, on $V_i$ is a complete $m$-partite graph with parts $T_1,\dots,T_m$.
	For any two vertices $x$ and $y$ in $V_i$, we have $\nu_{x,y} = (\theta_1+\theta_2)A_{xy} + \alpha_i^2$.
	Now, $x$ is adjacent to $y$ if and only if they are both in the same part $T_{l}$ for some $l$.
	Moreover, the number of common neighbours of $x$ and $y$ in $K$ is $k_{ii} - n_i/m$ if they are adjacent and $k_{ii}$ otherwise.
	Hence, taking $V_i$ as the set of points and $V_j$ as the set of blocks, we have that $B_i$ is the incidence matrix the required GDD.
	The other case follows similarly.
\end{proof}

\begin{remark}
	\label{rem:2-des}
	We can apply Theorem~\ref{thm:2-des} to some of the valency-arrays and spectra given in Table~\ref{tab:fea}.
	In the comment column of the table, we write the parameters of the GDD or $2$-design whose existence is implied by the existence of a graph having the corresponding valencies and spectra.
\end{remark}

\begin{corollary}\label{cor:2-des}
	Let $\Gamma$ be a graph in $\mathcal G(\theta_0,\theta_1,\theta_2)$ whose valency partition has quotient matrix $\left ( \begin{smallmatrix}
	k_{11} & k_{12}\\
	k_{21} & k_{22}
	\end{smallmatrix} \right )$.
	If both $k_{ii} = 0$ and $k_{jj} = n_j-1$ then $\Gamma$ exists if and only if there exists an $(n_i,k_{ji},\alpha_i^2)$-design.
\end{corollary}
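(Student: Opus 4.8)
The plan is to specialise Theorem~\ref{thm:2-des} to the degenerate cases of its two constructions and check that the resulting ``group divisible design'' collapses to an honest $2$-design, then verify the converse. First I would observe that the hypotheses $k_{ii}=0$ and $k_{jj}=n_j-1$ describe a very rigid local structure: the induced subgraph on $V_i$ is an empty graph (all valency absorbed by edges to $V_j$), while the induced subgraph on $V_j$ is complete (valency $n_j-1$ inside $V_j$ means every pair in $V_j$ is adjacent, so $V_j$ is a clique $K_{n_j}$). The empty graph on $V_i$ is the disjoint union of $n_i$ copies of $K_1$, so it fits case~(ii) of Theorem~\ref{thm:2-des} with $m=n_i$ and parts of size $n_i/m=1$. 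Plugging $k_{ii}=0$ and $m=n_i$ into the formulas of case~(ii) gives $\lambda_1=\alpha_i^2+\theta_1+\theta_2+1$ and $\lambda_2=\alpha_i^2$; since with $n=n_i/m=1$ there are no two distinct points in a common group, $\lambda_1$ is vacuous and only $\lambda_2=\alpha_i^2$ survives as the pairwise concurrence number.

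The key step is then to record that when the group size is $1$, a GDD with parameters $(n_i,n_j;k_{ij},k_{ji};\lambda_1,\lambda_2;m,n_i/m)$ is precisely a $2$-$(n_i,k_{ji},\lambda_2)$-design: every point lies in $r=k_{ij}$ blocks, every block has size $k=k_{ji}$, and every pair of distinct points is covered $\lambda_2=\alpha_i^2$ times (the intra-group condition disappears because each group is a single point). This is exactly the degenerate case noted after the definition of a GDD, where $n=v$ (equivalently, groups of size $1$) reduces the design to an ordinary $2$-design. So the forward direction, the existence of $\Gamma$ implying the existence of an $(n_i,k_{ji},\alpha_i^2)$-design, is an immediate application of Theorem~\ref{thm:2-des}(ii) once one has verified that the degeneracy makes the GDD a genuine $2$-design.

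For the converse I would reverse the construction: given an $(n_i,k_{ji},\alpha_i^2)$-design with incidence matrix $B_i$, I would build the adjacency matrix $A$ in the block form of Theorem~\ref{thm:2-des}, taking $A_i$ to be the zero matrix (since $k_{ii}=0$), $A_j$ to be $J-I$ on $n_j$ vertices (the clique, since $k_{jj}=n_j-1$), and the off-diagonal blocks $B_i$ and $B_i^T$ from the design's incidence matrix. The main obstacle, and the step that needs genuine verification rather than bookkeeping, is to check that this candidate matrix actually has the prescribed three eigenvalues $\theta_0,\theta_1,\theta_2$. For this I would compute $(A-\theta_1 I)(A-\theta_2 I)$ block-by-block and confirm it equals $\alpha\alpha^T$ as required by the characterisation in Section~\ref{sec:prelim}; the design's defining property that any two points share exactly $\alpha_i^2$ blocks is precisely what forces the common-neighbour counts $\nu_{x,y}=\alpha_x\alpha_y+(\theta_1+\theta_2)A_{x,y}$ to come out correctly on $V_i$, while the clique and empty structures handle the diagonal blocks and the constant row sums of $B_i$ handle the cross terms. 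Verifying that the valency partition really is equitable with the stated quotient matrix, and that the Perron eigenvector entries $\alpha_i,\alpha_j$ are consistent with the design parameters (so that $d_x=\alpha_x^2-\theta_1\theta_2$ holds for both valency classes), is the crux; once that identity is established, having exactly three distinct eigenvalues follows from the rank-one form of $(A-\theta_1 I)(A-\theta_2 I)$.
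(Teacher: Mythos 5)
Your proposal follows the paper's proof in all essentials: the forward implication is read off from Theorem~\ref{thm:2-des} (since $k_{ii}=0$, every pair of points of $V_i$ is non-adjacent and lies in $\nu_{x,y}=\alpha_i^2$ common blocks, so the degenerate GDD is an honest $2$-design --- the paper's route is case~(i) with $m=1$ rather than your singleton-groups reading of case~(ii), which technically conflicts with the requirement $n\geqslant 2$ in the GDD definition, but the conclusion is the same), and the converse is the same block construction $A=\left(\begin{smallmatrix} O & B \\ B^\top & J-I\end{smallmatrix}\right)$ followed by checking a rank-one identity, which the paper records as $A^2+A+(\lambda-r)I=\alpha\alpha^\top$.

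Two of the block verifications in your converse are not as automatic as you suggest, and one is a genuine gap as written. First, the $(i,j)$ block of $(A-\theta_1I)(A-\theta_2I)$ is $BJ-(1+\theta_1+\theta_2)B$; constant row sums give $BJ=rJ$, but the leftover term is a constant matrix only if $\theta_1+\theta_2=-1$. This is forced by the hypotheses (it is why the paper can write the identity with coefficient $+1$ on $A$), but it must be observed rather than absorbed into ``the cross terms work out''. Second, and more seriously, the $(j,j)$ block contains $B^\top B$, whose off-diagonal entries are the pairwise block intersection numbers of the design; these are \emph{not} constant for a general $2$-design, so ``the clique and empty structures handle the diagonal blocks'' fails here: for a non-symmetric $(n_i,k_{ji},\alpha_i^2)$-design the constructed matrix does not have three eigenvalues. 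What saves the corollary is that the parameters force symmetry: the design has $n_j$ blocks, $BB^\top=(r-\lambda)I+\lambda J$ is nonsingular of rank $n_i$, and a nonzero matrix of the form $cI+dJ$ with $c\neq0$ is nonsingular, so $B^\top B$ can have the required form only if $n_j=n_i$, i.e.\ the design is symmetric; then any two blocks meet in exactly $\lambda=\alpha_i^2$ points, $B^\top B=(k-\lambda)I+\lambda J$, and the computation closes. (The unique entry of Table~\ref{tab:fea} citing this corollary --- the $78$ symmetric $2$-$(25,9,3)$ designs --- is consistent with this.) The paper's one-line proof of the converse elides the same point, but your sketch positively attributes the $(j,j)$ block to the wrong structure, so you should insert the symmetry argument before asserting that the identity ``follows''.
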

\begin{proof}
	By Theorem~\ref{thm:2-des} it follows that the existence of the graph implies that of the design.
	Conversely, let $B$ be the incidence matrix of an $(n,k,\lambda)$-design with $b$ blocks such that each point is in $r$ blocks.
	Then \[
		A =
		\begin{pmatrix}
		O & B\\
		B^\top & J-I
		\end{pmatrix}
		\text{~~with quotient matrix~~}
		\begin{pmatrix}
		0 & r\\
		k & b-1
		\end{pmatrix}
	\]
	is an adjacency matrix of a graph and $A^2 + A + (\lambda-r)I = \alpha\alpha^T$ for some (positive) vector $\alpha$ with $\alpha_1^2 = \lambda$.
\end{proof}

Define the \textbf{total graph} of a $(q^3,q^2,q+1)$-design as the bipartite incidence graph with edges added between two blocks if they intersect in $q$ points (see \cite[p.\ 197]{MK}).
A resolvable $(n,k,\lambda)$-design is called \textbf{affine} if there exists some $\mu$ such that any two non-parallel blocks intersects in $\mu$ points. 
We write $\AR(n,k,\lambda)$ to refer to such a design in Table~\ref{tab:fea}.
We refer the reader to Shrikhande~\cite{shr} for a survey on such designs.

Van Dam~\cite[p.\ 104]{Dam3ev} showed that, given any affine resolvable $(q^{3},q^{2},q+1)$-design, the total graph is nonregular with spectrum $\{[q^{3}+q^{2}+q]^{1} ,[q]^{q^{3}-1} ,[-q]^{q^{3}+q^{2}+q}\}$.
In our next result we show that graphs having the valency-array and spectra of the total graph $\Gamma$ of such a design are necessarily isomorphic to $\Gamma$.

\begin{theorem}\label{thm:resaff}
	Let $q$ be a positive integer.
	Let $\Gamma$ be a biregular graph with valency-array $(q^{3}+q^{2}+q, q^{3}; q^{3} + 2 q^{2}, q^{2}+q+1 )$ and spectrum $\{[q^{3}+q^{2}+q]^{1} ,[q]^{q^{3}-1} ,[-q]^{q^{3}+q^{2}+q}\}$.
	Then $\Gamma$ is the total graph of some affine resolvable $(q^{3},q^{2},q+1)$-design.
\end{theorem}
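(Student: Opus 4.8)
The plan is to recover the design and its affine resolvable structure directly from the valency partition and the defining identity $(A-qI)(A+qI)=\alpha\alpha^{\top}$. Writing $\theta_0=q^3+q^2+q$, $\theta_1=q$, $\theta_2=-q$, the graph $\Gamma$ is non-bipartite (as $\theta_1\neq 0$, cf. Corollary~\ref{cor:compbi}) and connected (as $\theta_0$ is simple), and biregular hence nonregular, so $\Gamma\in\mathcal G(\theta_0,\theta_1,\theta_2)$ and Theorem~\ref{prop2val} applies. First I would compute $\alpha_1=q\sqrt{q+1}$ and $\alpha_2=\sqrt{q+1}$ from $d_x=\alpha_x^2-\theta_1\theta_2$, so that $\alpha_1\alpha_2=q^2+q$ and $\alpha_1=q\alpha_2$, and then read off the quotient matrix $Q=\left(\begin{smallmatrix}q^3+q^2 & q^2\\ q^2+q+1 & 0\end{smallmatrix}\right)$. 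The key structural consequences are $k_{22}=0$ (so $V_2$ is an independent set) and $k_{12}=q^2$, $k_{21}=q^2+q+1$. I would then interpret $V_2$ as a point set and $V_1$ as a block set, each block $x$ being identified with its set $B_x$ of $q^2$ neighbours in $V_2$. Applying $\nu_{x,y}=(\theta_1+\theta_2)A_{x,y}+\alpha_x\alpha_y$ to a pair $x,y\in V_2$ (necessarily non-adjacent) and using $\theta_1+\theta_2=0$ gives $\nu_{x,y}=\alpha_2^2=q+1$; hence any two points lie in exactly $q+1$ common blocks and the incidence structure $N$ of blocks versus points is a $2$-$(q^3,q^2,q+1)$ design, with $b=q^3+q^2+q$ blocks and replication number $r=q^2+q+1$.

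The heart of the argument is to show that the block--block adjacency $A_1$ is exactly the non-parallel-intersection graph of an affine resolvable design. Since $\theta_1+\theta_2=0$ the defining identity reads $A^2=q^2I+\alpha\alpha^{\top}$; writing $A=\left(\begin{smallmatrix}A_1 & N\\ N^{\top} & 0\end{smallmatrix}\right)$ and comparing blocks yields $N^{\top}N=q^2I+(q+1)J$, $A_1N=(q^2+q)J$, and $A_1^2=(q^3+q^2)J-M$, where $M:=NN^{\top}-q^2I$ is the off-diagonal block-intersection matrix (so $M_{xy}=|B_x\cap B_y|$ for $x\neq y$). From $A_1N=(q^2+q)J$ I would deduce $A_1M=(q^4+q^3)J-q^2A_1$, which is symmetric, so $A_1$ and $M$ commute and can be simultaneously diagonalised. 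Reading the spectrum of $N^{\top}N$ off the $2$-design gives the spectrum of $NN^{\top}$, hence of $M$, namely $q^4+q^3$, $0$, $-q^2$ with multiplicities $1$, $q^3-1$, $q^2+q$; combining this with $A_1^2=-M$ on $\mathbf 1^{\perp}$ shows that $A_1$ (regular of degree $q^3+q^2$ since the valency partition is equitable) has eigenvalues $q^3+q^2$, $0$, $-q$. The condition $\tr A_1=0$ then fixes their multiplicities as $1$, $q^3-1$, $q^2+q$ and in particular rules out the eigenvalue $+q$.

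With this spectrum two conclusions follow. First, the complement of $A_1$ is regular with eigenvalues $q-1$ (multiplicity $q^2+q+1$) and $-1$ (multiplicity $q^3-1$) only, so it is a disjoint union of $q^2+q+1$ cliques $K_q$; these cliques are the classes of mutually non-adjacent blocks. Second, since $A_1$ is regular with eigenvalues $\{q^3+q^2,0,-q\}$, the polynomial identity $A_1(A_1+qI)=(q^3+q^2)J$ holds, which rewrites as $A_1^2=(q^3+q^2)J-qA_1$ and hence $M=qA_1$. Thus $|B_x\cap B_y|=q$ when $x\sim y$ and $|B_x\cap B_y|=0$ when $x\not\sim y$. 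Interpreting this combinatorially: the $q$ blocks in each $\overline{A_1}$-clique are pairwise disjoint $q^2$-sets covering all $q^3$ points, so they form a parallel class and the design is resolvable with $q^2+q+1$ parallel classes; any two non-parallel blocks meet in exactly $\mu=q$ points, so the design is affine. Finally the edges of $\Gamma$ are precisely the point--block incidences together with the block pairs meeting in $q$ points, which is exactly the total graph of this affine resolvable $(q^3,q^2,q+1)$-design.

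The main obstacle is the middle step: pinning down the spectrum, and especially the multiplicities, of the block subgraph $A_1$. This is where the commutation of $A_1$ and $M$ is essential, since it lets the known spectrum of $M$ (obtained from the $2$-design together with $\dim\ker N^{\top}=0$) be transferred to $A_1$; the trace identity is then needed to exclude the spurious eigenvalue $+q$. Everything after that is a direct translation of the relation $M=qA_1$ into the language of parallel classes and constant intersection numbers, and a check against the definition of the total graph.
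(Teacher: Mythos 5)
Your proposal is correct and reaches the right conclusion, but the way you pin down the structure of the block subgraph is genuinely different from the paper's argument, and it is worth comparing the two. The paper gets the $2$-$(q^3,q^2,q+1)$ design from Theorem~\ref{thm:2-des} (you rederive this directly from $\nu_{x,y}=\alpha_2^2=q+1$ on the independent set $V_2$, which amounts to the same computation), and then identifies the induced subgraph $\regSub$ on $V_1$ in one stroke by interlacing: since $-q$ has multiplicity $q^3+q^2+q$ in $\Gamma$ and $|V_2|=q^3$ vertices are deleted, $\regSub$ retains $-q$ with multiplicity at least $q^2+q$, whence its complement is a $(q-1)$-regular graph with the eigenvalue $q-1$ of multiplicity at least $q^2+q+1$, forcing it to be $q^2+q+1$ disjoint copies of $K_q$; the intersection numbers $0$ and $q$ are then read off from $\nu_{x,y}=\alpha_1^2$ applied inside the complete multipartite graph. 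You instead work entirely from the block decomposition of $A^2=q^2I+\alpha\alpha^{\top}$: you extract $N^{\top}N=q^2I+(q+1)J$, transfer its spectrum to $NN^{\top}$ (using that $N$ has full column rank), show $A_1$ commutes with the intersection matrix $M=NN^{\top}-q^2I$, and combine $A_1^2=(q^3+q^2)J-M$ with $\tr A_1=0$ to get the spectrum $\{[q^3+q^2]^1,[0]^{q^3-1},[-q]^{q^2+q}\}$; the Hoffman-type identity $A_1(A_1+qI)=(q^3+q^2)J$ then hands you $M=qA_1$, which makes the intersection numbers and the parallel classes immediate. All of your steps check out (the quotient matrix, the eigenvalue count of $M$, the trace argument excluding $+q$, and the translation into parallel classes), so the only real trade-off is length versus explicitness: the interlacing route is much shorter, while your route produces the clean identity $M=qA_1$ that encodes the affine resolvable structure directly, without needing to count common neighbours inside $K_{q,\dots,q}$. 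One small economy you could make: since $A_1$ is symmetric, the spectrum of $A_1^2$ already determines that of $A_1$ up to signs, so the explicit commutation of $A_1$ with $M$ is not strictly needed once you know the spectrum of $(q^3+q^2)J-M$.
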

\begin{proof}
	The adjacency matrix of $\Gamma$, under the valency partition, has block form:
		\[
		\begin{pmatrix}
		A_1 & B^{\top}\\
		B & A_2
		\end{pmatrix}
		\text{~~with quotient matrix~~}
		\begin{pmatrix}
		q^{3}+q^{2} & q^{2} \\
		q^{2}+q+1   & 0
		\end{pmatrix}.
		\]
     By Theorem~\ref{thm:2-des}, $B$ is the incidence matrix of a $(q^{3},q^{2},q+1)$-design.
	 Take $V_2$ as the set of points and $V_1$ as the set of blocks.
     The induced subgraph $\regSub$ on the vertex set $V_1$ is a $q^{3}+q^{2}$-regular graph with $n_1 =q^{3}+q^{2}+q$ vertices and, by interlacing, this subgraph has eigenvalue $-q$ with multiplicity at least $q^{2}+q$.
	 Hence the complement of $\regSub$ is a $q-1$-regular graph, which has eigenvalue $q-1$ with multiplicity at least $q^{2}+q+1$.
	 It follows that $\regSub$ is a complete $q^{2}+q+1$-partite graph $K_{q,\dots,q}$.
     So any blocks in the same part are disjoint and two blocks in different part intersect in $q$ points.
	 Therefore, our $(q^{3},q^{2},q+1)$-design is affine resolvable.
\end{proof}

\begin{remark}
	\label{rem:66}
	In Table~\ref{tab:fea}, the entry with $66$ vertices and largest eigenvalue $39$ satisfies the assumptions of Theorem~\ref{thm:resaff}.
	Indeed, it corresponds to an affine resolvable $(27,9,4)$-design of which there are known~\cite{Tonch} to exist precisely $68$.
\end{remark}

\section{Complements and switchings}
\label{sec:comp}

In this section we will discuss connected graphs $\Gamma$ that, together with their complements $\overline{\Gamma}$, both have precisely $3$ distinct eigenvalues.
Strongly regular graphs that are both connected and coconnected satisfy this property.
We will show that, for nonregular graphs, such a graph must be biregular.
In this section we will also consider biregular graphs with three distinct eigenvalues such that the property of having three distinct eigenvalues is preserved under switching with respect to the valency partition.
(Recall that \textbf{switching} a subset $W$ of vertices of a graph is the process of swapping each edge between $W$ and $V(\Gamma)\backslash W$ with a non-edge and \emph{vice versa}.)
First we describe the eigenvalues of the complement of a biregular graph with three eigenvalues.

\begin{proposition}\label{pro:bicomp}
	Let $\Gamma$ be a biregular $n$-vertex graph in $\mathcal G(\theta_0,\theta_1,\theta_2)$ and let $A$ be its adjacency matrix, and let $\theta_0+\theta$ be the trace of the quotient matrix of the valency partition of $\Gamma$.
	Then the complement $\overline \Gamma$ of $\Gamma$ has at most $4$ distinct eigenvalues.
	For $n > 6$ the eigenvalues of $\overline \Gamma$ are $-1-\theta_1$, $-1-\theta_2$, and
	\[
		\frac{n-2-(\theta_0+\theta)}{2} \pm \frac{\sqrt{ (n+\theta_0+\theta)^2-4( \theta_0\theta +\operatorname{tr}A^2)}}{2}.
	\]
	Moreover, for $i \in \{1,2\}$, the multiplicity of $-1-\theta_i$ in $\overline \Gamma$ is at least $m_i-1$ if $\theta_i = \theta$ or at least $m_i$ if $\theta_i \ne \theta$.
\end{proposition}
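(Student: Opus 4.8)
The plan is to exploit the fact that, by Theorem~\ref{prop2val}(i), the valency partition $\{V_1,V_2\}$ is equitable for $\Gamma$; since complementation preserves equitability of a partition, $\{V_1,V_2\}$ is also equitable for $\overline\Gamma$, with quotient matrix $\overline Q = N - I - Q$, where $N = \left(\begin{smallmatrix} n_1 & n_2 \\ n_1 & n_2\end{smallmatrix}\right)$ records the quotient of $J$. I would then split $\mathbb{R}^V = U \oplus U^\perp$, where $U$ is the two-dimensional space spanned by the indicator vectors $\mathbf 1_{V_1},\mathbf 1_{V_2}$ and $U^\perp$ is its orthogonal complement. Because the partition is equitable, $A$ maps $U$ into $U$ (acting there as $Q$) and, being symmetric, maps $U^\perp$ into $U^\perp$; the same holds for $I$ and for $J = \mathbf 1\mathbf 1^\top$ (whose range lies in $U$ since $\mathbf 1\in U$). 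Hence both subspaces are invariant under $\overline A = J - I - A$.

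The crucial observation is that $J$ annihilates $U^\perp$ (every vector there is orthogonal to $\mathbf 1$), so on $U^\perp$ we simply have $\overline A = -I - A$. Since the Perron eigenvector $\alpha$ is constant on each part and therefore lies in $U$, the eigenvalue $\theta_0$ does not occur on $U^\perp$; thus the $A$-eigenvalues on $U^\perp$ all lie in $\{\theta_1,\theta_2\}$, and the corresponding $\overline A$-eigenvalues are $-1-\theta_1$ and $-1-\theta_2$. For the multiplicities, I note that on $U$ the matrix $A$ has eigenvalues $\theta_0$ and $\theta$, so exactly one copy of whichever of $\theta_1,\theta_2$ equals $\theta$ is used up in $U$. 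This yields multiplicity $m_i-1$ of $\theta_i$ on $U^\perp$ when $\theta_i=\theta$ and $m_i$ when $\theta_i\neq\theta$, giving the stated lower bounds for the multiplicity of $-1-\theta_i$ in $\overline\Gamma$; the bound is only ``at least'' because a $\overline Q$-eigenvalue arising from $U$ may happen to coincide with $-1-\theta_i$.

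It remains to identify the two eigenvalues of $\overline A$ coming from $U$, namely the eigenvalues $\rho_\pm$ of $\overline Q$. Their sum is $\operatorname{tr}\overline Q = n - 2 - (\theta_0+\theta)$, which is the central term of the formula. For the discriminant I would compute $\rho_+^2+\rho_-^2 = \operatorname{tr}\overline Q^2$ by a global trace count: $\operatorname{tr}\overline A^2 = n(n-1) - \operatorname{tr}A^2$, from which I subtract the contribution of $U^\perp$, obtained from $\sum_{U^\perp}\lambda = \operatorname{tr}A - \operatorname{tr}Q = -(\theta_0+\theta)$ and $\sum_{U^\perp}\lambda^2 = \operatorname{tr}A^2 - (\theta_0^2+\theta^2)$ together with $\overline A = -I - A$ there. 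Then $(\rho_+-\rho_-)^2 = 2\operatorname{tr}\overline Q^2 - (\operatorname{tr}\overline Q)^2$ simplifies, after collecting the $n^2$, $nT$ and $T^2$ terms (with $T=\theta_0+\theta$), to exactly $(n+\theta_0+\theta)^2 - 4(\theta_0\theta+\operatorname{tr}A^2)$, which yields the displayed expression for $\rho_\pm$. Since $\overline\Gamma$ then has only the four candidate eigenvalues $-1-\theta_1$, $-1-\theta_2$, $\rho_+$, $\rho_-$, it has at most four distinct eigenvalues.

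I expect the only real care to be in the bookkeeping: tracking which of $\theta_1,\theta_2$ equals $\theta$ when counting multiplicities, and carrying out the trace computation so that the discriminant collapses to precisely the stated form. The hypothesis $n>6$ is invoked to discard the finitely many small graphs (available from the classification of small three-eigenvalue graphs) for which this generic eigenspace decomposition could degenerate; for all larger $n$ the argument above applies verbatim.
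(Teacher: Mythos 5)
Your argument is correct and follows essentially the same route as the paper: both rest on the equitability of the valency partition (Theorem~\ref{prop2val}), pass to the quotient matrix $\overline Q$ of the complement, and obtain $-1-\theta_1$ and $-1-\theta_2$ from the action of $\overline A = J-I-A$ on the orthogonal complement of the span of the part indicators, your trace-count derivation of the $\overline Q$-eigenvalues being only a cosmetic variant of the paper's trace/determinant computation. The one inaccuracy is your explanation of the hypothesis $n>6$: it is not there to excise small exceptional graphs from a classification, but to guarantee via Theorem~\ref{thm:bell} that $m_1,m_2\geqslant 2$, so that after one copy of $\theta$ is absorbed into the quotient the eigenvalues $-1-\theta_1$ and $-1-\theta_2$ still actually occur in $\overline\Gamma$.
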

\begin{proof}
	By Theorem~\ref{prop2val}, the valency partition of $\Gamma$ is equitable with quotient matrix $Q$.
	Therefore the valency partition of $\overline \Gamma$ is also equitable with quotient matrix $\overline Q$.
	For $n>6$, by Theorem~\ref{thm:bell} both the multiplicities $m_1$ and $m_2$ are at least $2$.
	Now $\overline \Gamma$ has eigenvalues $-1- \theta_1$ and $-1-\theta_2$ with multiplicities at least $m_1 - 1$ and $m_2 -1$ respectively.
	The two eigenvalues of $\overline Q$ are determined from the trace and determinant of $Q$.
	These eigenvalues have the form
	\[
		\frac{n-2-(\theta_0+\theta)}{2} \pm \frac{\sqrt{ (n+\operatorname{tr} Q)^2-4( \det Q +\operatorname{tr}A^2)}}{2}.
	\]
	Since the sum of the eigenvalues of $\overline \Gamma$ is zero, we obtain that the remaining eigenvalue is $\operatorname{tr} Q-(\theta_0+\theta_1+\theta_2) - 1$.
	By Theorem~\ref{prop2val}, we have either $\operatorname{tr} Q = \theta_0 + \theta_1$ or $\operatorname{tr} Q = \theta_0 + \theta_2$, whence the remaining eigenvalue is equal to $-1-\theta_2$ or $-1-\theta_1$ respectively. 
\end{proof}

\begin{theorem}\label{thm:complement} 
	Let $\Gamma$ be an $n$-vertex graph in $\mathcal G(\theta_0,\theta_1,\theta_2)$ such that its complement $\overline{\Gamma}$ is in $\mathcal G(\theta_0^\prime,\theta_1^\prime,\theta_2^\prime)$.
	Then $\Gamma$ is biregular with valencies $k_1$ and $k_2$ satisfying
	\[
		k_1, k_2 =\frac{n+\theta_1+\theta_2\pm\sqrt{(n+\theta_1+\theta_2+2\theta_1\theta_2)^{2}-4\theta_2^{2}(\theta_1+1)^{2}}}{2}.
	\]
	Moreover,
		\begin{align*}
			n &= \frac{(\theta_0-\theta_1)^{2}}{\theta_0-\theta_1+\theta_1\theta_2+\theta_2}; &&\theta_0 = \frac{n}{2}+\theta_1\pm\sqrt{n(n+4\theta_2(\theta_1 +1))}/2; \\
			\theta_0' &= n-1 -\theta_0 + \theta_1-\theta_2, &&\theta_1^\prime 
			= -1 -\theta_2 \text{, and }\; \theta_2^\prime = -1 -\theta_1.
		\end{align*}
\end{theorem}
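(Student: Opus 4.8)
The starting point is Theorem~\ref{thm:disconn} together with Corollary~\ref{cor:compbi}: since $\Gamma$ and $\overline{\Gamma}$ both lie in some $\mathcal G$, neither is bipartite (a complete bipartite graph has a disconnected complement, which is a union of complete graphs and hence does not have three distinct eigenvalues unless it is itself degenerate), so both $\theta_1$ and $\theta_1'$ are positive integers and $\Gamma,\overline\Gamma$ are non-bipartite with integral spectra. First I would show $\Gamma$ is biregular. The key observation is that for any vertex $x$ one has $d_x = \alpha_x^2 - \theta_1\theta_2$ in $\Gamma$ and $\overline{d_x} = n-1-d_x = \overline\alpha_x^2 - \theta_1'\theta_2'$ in $\overline\Gamma$, where $\overline\alpha$ is the Perron eigenvector of $\overline\Gamma$. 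Using that $A + \overline A = J - I$ and that both $\alpha$ and $\overline\alpha$ are (up to scaling) Perron vectors, I would argue that the valency partition is forced to have at most two cells: writing $\nu_{x,y} = (\theta_1+\theta_2)A_{x,y} + \alpha_x\alpha_y$ in $\Gamma$ and the analogous identity in $\overline\Gamma$, and summing the counting relation $\nu_{x,y} + \overline\nu_{x,y} = n - 2 - d_x - d_y + 2\nu_{x,y}$ (the count of common neighbours versus common non-neighbours), one obtains a quadratic constraint on $\alpha_x$ for each fixed valency class that admits at most two positive roots.

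\textbf{Deriving the valencies and eigenvalues of $\overline\Gamma$.}
Granting biregularity, I would invoke Proposition~\ref{pro:bicomp}: for $n>6$, $\overline\Gamma$ has eigenvalues $-1-\theta_1$, $-1-\theta_2$, plus the two eigenvalues of $\overline Q$, and for $\overline\Gamma$ to have exactly three distinct eigenvalues one of the two $\overline Q$-eigenvalues must coincide with $-1-\theta_1$ or $-1-\theta_2$. By Theorem~\ref{prop2val}, $\operatorname{tr}Q$ equals $\theta_0+\theta_1$ or $\theta_0+\theta_2$; the Proposition shows the remaining eigenvalue is then $-1-\theta_2$ or $-1-\theta_1$ respectively. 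This pins down $\theta_1' = -1-\theta_2$ and $\theta_2' = -1-\theta_1$ (the ordering following from $\theta_1>\theta_2$), and $\theta_0'$ is then forced by the trace condition $\theta_0' + m_1'\theta_1' + m_2'\theta_2' = 0$ together with $\theta_0' = \operatorname{tr}Q - (\theta_0+\theta_1+\theta_2)-1$ from the same Proposition, giving $\theta_0' = n-1-\theta_0+\theta_1-\theta_2$.

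\textbf{The explicit formulas.}
For the formula for $n$, I would use Theorem~\ref{prop2val}(iii), which gives $\alpha_1\alpha_2 = -\theta_2(\theta_1+1)$ in the case $\operatorname{tr}Q=\theta_0+\theta_2$ that survives here (since $k_{11}=0$ or $k_{22}=0$ is forced by the complement structure, as in the proof of Theorem~\ref{thm:disconn}). Substituting $\alpha_1\alpha_2 = -\theta_2(\theta_1+1)$ into the formula for $n$ in Theorem~\ref{prop2val}(iv) and simplifying should collapse it to $n = (\theta_0-\theta_1)^2/(\theta_0-\theta_1+\theta_1\theta_2+\theta_2)$. The $\theta_0$ formula then comes from solving this as a quadratic in $\theta_0$. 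Finally, the valencies $k_1,k_2 = \alpha_i^2 - \theta_1\theta_2$ are the two roots of a quadratic whose sum is $k_1+k_2 = \alpha_1^2+\alpha_2^2-2\theta_1\theta_2$ and whose product involves $\alpha_1^2\alpha_2^2 = \theta_2^2(\theta_1+1)^2$; writing the discriminant in terms of $\alpha_1^2+\alpha_2^2$ and using $\alpha_1\alpha_2=-\theta_2(\theta_1+1)$ should yield the stated radical $\sqrt{(n+\theta_1+\theta_2+2\theta_1\theta_2)^2 - 4\theta_2^2(\theta_1+1)^2}$, where I would substitute the expression for $n$ to reconcile $\alpha_1^2+\alpha_2^2$ with the $n$-dependent term.

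\textbf{The main obstacle.}
The delicate step is establishing biregularity in the first place, i.e.\ ruling out three or more valencies: Theorem~\ref{thm:disconn} only handles the disconnected-complement case, whereas here both $\Gamma$ and $\overline\Gamma$ may be connected and coconnected. The argument must exploit the simultaneous three-eigenvalue condition on both graphs, most likely by combining the rank-one identities $(A-\theta_1 I)(A-\theta_2 I)=\alpha\alpha^\top$ and $(\overline A-\theta_1'I)(\overline A-\theta_2'I)=\overline\alpha\,\overline\alpha^\top$ with $\overline A = J-I-A$ to show that $\alpha_x$ takes at most two values; carrying this linear-algebraic reduction through cleanly, rather than the subsequent arithmetic, is where the real work lies.
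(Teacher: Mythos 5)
Your overall architecture matches the paper's (establish biregularity, then use Proposition~\ref{pro:bicomp} and Theorem~\ref{prop2val} to extract the formulas), but the proposal has a genuine gap exactly where you admit "the real work lies": biregularity is not established, and the sketch you give for it would not go through as stated. The counting relation between $\nu_{x,y}$ and $\overline{\nu}_{x,y}$ involves both $\alpha_x,\alpha_y$ and $\alpha'_x,\alpha'_y$, and to even write $\alpha_x'$ as a function of $\alpha_x$ you need $\theta_1'\theta_2'$, i.e.\ you need $\theta_1'=-1-\theta_2$ and $\theta_2'=-1-\theta_1$ \emph{before} biregularity — which your plan only derives \emph{after} biregularity via Proposition~\ref{pro:bicomp}, a circularity. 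The paper breaks this circle differently: it first uses Theorem~\ref{thm:bell} to force $m_1,m_2\geqslant 3$, so that $-1-\theta_1$ and $-1-\theta_2$ each appear in $\overline{\Gamma}$ with multiplicity at least $2$ and must therefore be $\theta_2'$ and $\theta_1'$; it then combines $\theta_0+\theta_0'\geqslant n-1$ (from Lemma~\ref{lem:rhobound}) with the trace identities to pin down the multiplicities as $m_1=m_2'-1$, $m_2=m_1'+1$ (ruling out the equality case, which would force regularity); and finally, for an adjacent pair $x\sim y$, it counts the set $N_x$ of neighbours of $x$ not adjacent to $y$ in two ways (in $\Gamma$ and in $\overline{\Gamma}$) to get $\alpha_x(\alpha_x-\alpha_y)=\alpha_y'(\alpha_y'-\alpha_x')$ and its mate, whose sum and difference yield $\alpha_x=\alpha_y'$, $\alpha_y=\alpha_x'$ whenever $d_x\neq d_y$ — and this pigeonholes the valencies down to two. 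Your "quadratic constraint with at most two positive roots" does not obviously emerge from your setup, and you do not carry it out.

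There is also a concrete error in your route to the explicit formulas: you justify $\alpha_1\alpha_2=-\theta_2(\theta_1+1)$ by claiming "$k_{11}=0$ or $k_{22}=0$ is forced by the complement structure, as in the proof of Theorem~\ref{thm:disconn}." That theorem applies only when $\overline{\Gamma}$ is disconnected, whereas here $\overline{\Gamma}$ is connected by hypothesis, and the feasible parameter sets (e.g.\ the $n=48$ case of Proposition~\ref{pro:comp}) have $k_{11},k_{22}>0$. The correct argument is multiplicity bookkeeping: since the multiplicity of $-1-\theta_2$ in $\overline{\Gamma}$ is exactly $m_2-1<m_2$, Proposition~\ref{pro:bicomp} forces the second eigenvalue of the quotient matrix $Q$ to be $\theta_2$ rather than $\theta_1$, and then Theorem~\ref{prop2val}(iii) gives $\alpha_1\alpha_2=-\theta_2(\theta_1+1)$. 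The same exact multiplicities are needed to get $\theta_0'=n-1-\theta_0+\theta_1-\theta_2$ (your trace argument is underdetermined without them), and the identity $\alpha_1^2+\alpha_2^2=n+\theta_1+\theta_2+2\theta_1\theta_2$, which you need for both the formula for $n$ and the quadratic for $k_1,k_2$, is itself a byproduct of the relation $\alpha_x=\alpha_y'$ from the biregularity step — so the missing step propagates into the "explicit formulas" part as well.
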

	
\begin{proof}
	We first show that $\Gamma$ is biregular.
	To distinguish between $\Gamma$ and $\overline \Gamma$ we denote elements of $\overline \Gamma$ with an apostrophe, e.g., we denote the valency of a vertex $x$ in $\overline \Gamma$ by $d_x^\prime$ and its eigenvalues $\theta_i^\prime$ have multiplicity $m_i^\prime$.

	Firstly, if either $m_1$ or $m_2$ are in $\{1,2\}$, then, by Theorem~\ref{thm:bell}, either $\theta_1=0$ or $n\leqslant {3\cdot4}/2=6$.
	The only graphs satisfying this condition are the complete bipartite graphs (see Corollary~\ref{cor:compbi}), and these graphs do not satisfy the assumption of the theorem.
	Therefore we can assume that both $m_1 \geqslant 3$ and $m_2 \geqslant 3$, and hence, for each $i \in \{1,2\}$, the complement $\overline{\Gamma}$ has eigenvalue $-1-\theta_i$ with multiplicity at least $m_i-1$.
	Thus the two smaller eigenvalues of $\overline{\Gamma}$ are $\theta_1'=-1-\theta_2$ with multiplicity $m_1'\geqslant m_2-1$ and $\theta_2'=-1-\theta_1$ with multiplicity $m_2'\geqslant m_1-1$.

Since $d_x+d_x'=n-1$, by Lemma~\ref{lem:rhobound}, for all $x \in V(\Gamma)$ we have
\begin{equation}
\theta_0 \geqslant \frac{\sum d_x}{n} \quad \text{ and } \quad \theta_0' \geqslant \frac{\sum d_x'}{n}.\label{eqn:bound1}
\end{equation}
It follows that $\theta_0+\theta_0'\geqslant \frac{n\cdot(n-1)}{n}=n-1$.
Further, we have $m_1+m_2=m_1'+m_2'=n-1$ and
\begin{align}
\theta_0+m_1\theta_1+m_2\theta_2 = 0 \quad \text{ and } \quad
\theta_0'+m_1'\theta_1'+m_2'\theta_2' = 0.  \label{eqn:tr1}
\end{align}
Hence $\theta_1(m_1-m_2')+\theta_2(m_2-m_1')=n-1-(\theta_0+\theta_0')$.
Thus $\theta_1(m_1-m_2')+\theta_2(m_2-m_1') \leqslant 0$ which implies either $m_1=m_2'$ and $m_2=m_1'$, or $m_1=m_2'-1$ and $m_2=m_1'+1$.

We will first consider the case $m_1=m_2'$ and $m_2=m_1'$.
In this case we have equality in \eqref{eqn:bound1} which implies that $\Gamma$ is regular (in fact strongly regular), but $\Gamma$ is nonregular so we must have that $m_1=m_2'-1$ and $m_2=m_1'+1$.

Take two vertices $x,y$.
Since $\Gamma$ and $\overline{\Gamma}$ are both connected, we can assume $x \sim y$ in $\Gamma$.
Denote by $N_x$ the set of neighbours of $x$ that are not adjacent to $y$ in $\Gamma$.
Then we can write
\begin{align*}
	|N_x| = d_x-\nu_{x,y}-1 &= \alpha_x^{2}-\theta_1\theta_2-\alpha_x\alpha_y-\theta_1-\theta_2-1 \\
	&= \alpha_x(\alpha_x-\alpha_y)-(\theta_1+1)(\theta_2+1).
\end{align*}

On the other hand, all vertices in $N_x$ are the neighbours of $y$ that are not adjacent to $x$ in $\overline{\Gamma}$, and hence
\begin{align*}
	|N_x| = d_y'-\nu_{x,y}^\prime &= \alpha_y'^{2}-(\theta_1+1)(\theta_2+1)-\alpha_x'\alpha_y' \\
	&= \alpha_y'(\alpha_y'-\alpha_x')-(\theta_1+1)(\theta_2+1).
\end{align*}

Then for all pairs of vertices $x$ and $y$, we have the following equations
\begin{align}
\alpha_x(\alpha_x-\alpha_y) &= \alpha_y'(\alpha_y'-\alpha_x'),  \label{eqn:aleq1} \\
\alpha_y(\alpha_y-\alpha_x) &= \alpha_x'(\alpha_x'-\alpha_y').  \label{eqn:aleq2}
\end{align}
Now, taking the sum and difference of Eqs. \eqref{eqn:aleq1} and \eqref{eqn:aleq2} implies that ${(\alpha_x-\alpha_y)}^{2}={(\alpha_y'-\alpha_x')}^{2}$ and $\alpha_x^{2}-\alpha_y^{2}=\alpha_y'^{2}-\alpha_x'^{2}$.
Whence we have $\alpha_x=\alpha_y'$ and $\alpha_y=\alpha_x'$ for all pairs of vertices $x$ and $y$ with $d_x \ne d_y$.
Therefore $\Gamma$ is biregular.

Since $m_1=m_2^\prime -1$ and $m_2 = m_1^\prime + 1$, we see from Eq.~\eqref{eqn:tr1} that $\theta_0+\theta_0'=n-1+\theta_1-\theta_2$.

	By Proposition~\ref{pro:bicomp}, the quotient matrix of the valency partition of $Q$ has eigenvalues $\theta_0$ and $\theta_2$.
	Applying Theorem~\ref{prop2val} gives $\alpha_1\alpha_2=-\theta_2(\theta_1+1)$, and hence $\alpha_1^{2}+\alpha_2^{2}=n+2\theta_1\theta_2+\theta_1+\theta_2$.
	Therefore we have
	$$
	k_{1}, k_{2}
	= \frac{n+\theta_1+\theta_2}{2} \pm \sqrt{\Big (\frac{n+\theta_1+\theta_2}{2}+\theta_1\theta_2 \Big )^{2}-\theta_2^{2}(\theta_1+1)^{2}}.
	$$
	Moreover, again by Theorem~\ref{prop2val}, we can write $n=\frac{(\theta_0-\theta_1)^{2}}{\theta_0-\theta_1+\theta_1\theta_2+\theta_2}$, and hence $\theta_0=\frac{n}{2}+\theta_1\pm\sqrt{n(n+4\theta_2(\theta_1 +1))}/2$.
\end{proof}
%
	
	In the next result we describe an infinite family of feasible valency-arrays and spectra for biregular graphs with three distinct eigenvalues whose complements also have three distinct eigenvalues.

	\begin{proposition}\label{pro:comp}
			For an integer $t \geqslant 1$, set $\theta_1 = -\theta_2 = 2t^{2}+2t-1$, $\theta_0=\theta_1+2\theta_1(\theta_1 +1)$.
			Further set $n_1 = n_2 = 2\theta_1(\theta_1+1)$ and $k_1, k_2 = 2(\theta_1+1)\theta_1 \pm (2t+1)\theta_1$.
			Suppose $\Gamma$ has valency-array $(n_1,n_2; k_1, k_2)$ and spectrum $\{ [\theta_0]^1, [\theta_1]^{m_1}, [\theta_2]^{m_2} \}$.
			Then the complement $\overline \Gamma$ is in $\mathcal G(\theta_0 -1,-1 - \theta_2, -1 -\theta_1)$.
			Moreover, the valency-array and spectra are feasible.	
	\end{proposition}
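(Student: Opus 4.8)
The plan is to read both assertions as explicit verifications resting on Theorem~\ref{prop2val} and Proposition~\ref{pro:bicomp}, with the parameters arranged so that a single arithmetic identity fixes the Perron eigenvalue of the complement. Throughout set $\theta_1 = 2t^2+2t-1$, so that $\theta_1+1 = 2t(t+1)$ and $\theta_1 \geqslant 3$ for $t \geqslant 1$; in particular $\theta_1 > 0$, so by Corollary~\ref{cor:compbi} the graph $\Gamma$ is non-bipartite and Theorem~\ref{prop2val} applies to it.

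First I would determine the Perron eigenvector entries. From $\alpha_i^2 = k_i + \theta_1\theta_2 = k_i - \theta_1^2$ and the prescribed valencies one computes $\alpha_1^2 = 2\theta_1(t+1)^2$ and $\alpha_2^2 = 2\theta_1 t^2$, so that $\alpha_1 = \sqrt{2\theta_1}\,(t+1)$ and $\alpha_2 = \sqrt{2\theta_1}\,t$. Consequently $\alpha_1\alpha_2 = 2\theta_1 t(t+1) = \theta_1(\theta_1+1)$. Since $\theta_1(\theta_1+1)$ equals $-\theta_2(\theta_1+1)$ but not $-\theta_1(\theta_2+1) = \theta_1^2-\theta_1$, Theorem~\ref{prop2val}(iii) forces the non-Perron eigenvalue of the quotient matrix $Q$ to be $\theta = \theta_2$.

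Next I would obtain the spectrum of $\overline\Gamma$ from Proposition~\ref{pro:bicomp}. As $n = n_1 + n_2 = 4\theta_1(\theta_1+1) > 6$, that proposition gives the eigenvalues of $\overline\Gamma$ as $-1-\theta_1$, $-1-\theta_2$, together with the two eigenvalues of $\overline Q$; and because $\theta = \theta_2$ (so that the trace of $Q$ equals $\theta_0+\theta_2$), the non-Perron eigenvalue of $\overline Q$ is $-1-\theta_1$. The trace of $\overline Q$ equals $n-2-(\theta_0+\theta_2)$, so the Perron eigenvalue of $\overline\Gamma$ is $\theta_0' = n-1-\theta_0+(\theta_1-\theta_2)$. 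Here the parameters are engineered so that $\theta_0 = n/2 + \theta_1$ (equivalently $n = 4\theta_1(\theta_1+1)$), which together with $\theta_2 = -\theta_1$ collapses this to $\theta_0' = \theta_0 - 1$. Since $\theta_0 - 1 = 2\theta_1^2+3\theta_1-1$ strictly exceeds $-1-\theta_2 = \theta_1 - 1$ and $-1-\theta_1$, the three values $\theta_0-1 > -1-\theta_2 > -1-\theta_1$ are distinct and each occurs with positive multiplicity, the largest being simple (so $\overline\Gamma$ is connected), while $\overline\Gamma$ is nonregular because $k_1 \neq k_2$; hence $\overline\Gamma \in \mathcal{G}(\theta_0-1, -1-\theta_2, -1-\theta_1)$.

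For the feasibility claim I would substitute $\alpha_1$ and $\alpha_2$ into the formulas of Theorem~\ref{prop2val}(i); the common factor $\sqrt{2\theta_1}$ cancels, leaving $k_{12} = (t+1)(k_1-\theta_0) = \theta_1(\theta_1+1)$, $k_{21} = t(\theta_0-k_2) = \theta_1(\theta_1+1)$, $k_{11} = k_1 - k_{12} = \theta_1(2t^2+4t+1)$ and $k_{22} = k_2 - k_{21} = \theta_1(2t^2-1)$, all nonnegative integers for $t \geqslant 1$. The identity $k_{12}n_1 = k_{21}n_2$ is immediate from $n_1 = n_2 = 2\theta_1(\theta_1+1)$, and Eq.~\eqref{mult} yields $m_1 = 2\theta_1^2+\theta_1-2$ and $m_2 = 2\theta_1^2+3\theta_1+1$, both positive integers with $1+m_1+m_2 = 4\theta_1(\theta_1+1) = n$. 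This presents $Q$ as a nonnegative integer matrix and $n_1,n_2,m_1,m_2$ as positive integers, so the valency-array and spectrum are feasible. The one genuinely conceptual ingredient is the engineered relation $\theta_0 = n/2+\theta_1$, which is exactly what pins the complement's Perron eigenvalue to the claimed value $\theta_0-1$; everything else is substitution into Theorem~\ref{prop2val} and Proposition~\ref{pro:bicomp}. The only points needing a little care are the sign and divisibility checks, namely $k_{22} = \theta_1(2t^2-1) \geqslant 0$ and $m_1 = 2\theta_1^2+\theta_1-2 > 0$, both of which hold since $\theta_1 \geqslant 3$ for $t \geqslant 1$.
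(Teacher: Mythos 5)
Your parameter computations are all correct ($\alpha_1^2=2\theta_1(t+1)^2$, $\alpha_2^2=2\theta_1 t^2$, $\alpha_1\alpha_2=\theta_1(\theta_1+1)$, the identification $\theta=\theta_2$, the entries of $Q$, and $m_1,m_2$), and your feasibility check is actually more explicit than the paper's. But there is a genuine gap at the central step. You assert that ``because $\theta=\theta_2$ \dots the non-Perron eigenvalue of $\overline{Q}$ is $-1-\theta_1$,'' and then recover $\theta_0'$ from the trace of $\overline{Q}$. Knowing $\operatorname{tr}Q=\theta_0+\theta_2$ only tells you the \emph{sum} of the two eigenvalues of $\overline{Q}$; it does not tell you that one of them equals $-1-\theta_1$. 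Proposition~\ref{pro:bicomp} deliberately says the complement has \emph{at most four} distinct eigenvalues: the two eigenvalues of $\overline{Q}$ are in general new, and the entire content of Proposition~\ref{pro:comp} is that for these engineered parameters one of them collapses onto $-1-\theta_1$. Your inference, applied verbatim to the Van Dam--Fano graph (where also $\theta=\theta_2$, with $\overline{Q}=\left(\begin{smallmatrix}0&3\\3&6\end{smallmatrix}\right)$ having eigenvalues $3\pm 3\sqrt{2}$), would wrongly conclude that its complement has three distinct eigenvalues. So the step you skip is exactly the step the paper's proof identifies as the thing to be shown, namely that
\[
\theta_0-1,\ -1-\theta_1 \;=\; \frac{n-2-(\theta_0+\theta_2)}{2}\pm\frac{\sqrt{(n+\theta_0+\theta_2)^2-4(\theta_0\theta_2+\operatorname{tr}A^2)}}{2}.
\]

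The gap is fillable by routine computation: with your explicit values $n_1=n_2=2\theta_1(\theta_1+1)$, $k_{12}=k_{21}=\theta_1(\theta_1+1)$, $k_{11}=\theta_1(2t^2+4t+1)$, $k_{22}=\theta_1(2t^2-1)$, one checks $\det\overline{Q}=-\theta_1^2(2t+1)^2-2\theta_1(\theta_1+1)+1=(\theta_0-1)(-1-\theta_1)$, which together with your trace identity $\operatorname{tr}\overline{Q}=2\theta_1^2+2\theta_1-2=(\theta_0-1)+(-1-\theta_1)$ pins down both eigenvalues of $\overline{Q}$ and closes the argument. Until you supply that determinant (or equivalently verify the displayed $\pm$-formula), the claims that $\overline{\Gamma}$ has only three distinct eigenvalues and that its spectral radius is $\theta_0-1$ are assumed rather than proved.
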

	\begin{proof}
		From the assumptions of the proposition we can write $\alpha_1\alpha_2 = \theta_1(\theta_1+1)$ and $\{ \alpha_1^2, \alpha_2^2 \} = \{ 2(2t^4+6t^3+5t^2-1), 2t^2(2t^2+2t-1) \}$.
		By Theorem~\ref{prop2val} the trace of the quotient matrix of the valency partition is $\theta_0 + \theta_2$.
		Apply Proposition~\ref{pro:bicomp}.
		It then suffices to show that
			\[
				\theta_0 - 1, -1 -\theta_1 = \frac{n-2-(\theta_0+\theta_2)}{2} \pm \frac{\sqrt{ (n+\theta_0+\theta_2)^2-4( \theta_0\theta_2 +\operatorname{tr}A^2)}}{2}.
			\]
		This equation follows from straightforward algebraic manipulations, using the multiplicity equations \eqref{mult} and the formula for $n$ in Theorem~\ref{prop2val}.
		
		To show that the valency-array and spectrum are feasible, one needs to check that Theorem~\ref{prop2val} is satisfied such that $Q$ is a nonnegative integer matrix and the quantities $n_1$, $n_2$, $m_1$, and $m_2$ are positive integers.
		We leave this task to the reader.
	\end{proof}

\begin{remark}
\label{rem:48}
	In Proposition~\ref{pro:comp}, the case having smallest $n$ is obtained when $t=1$.
	This gives $n=48$ with spectrum $\{[27]^{1},[3]^{19},[-3]^{28} \}$, and its complement has spectrum $\{ [26]^{1},[2]^{27},[-4]^{20} \}$.
	In Theorem~\ref{thm:48}, below, we show that there exists no graph satisfying these conditions.
\end{remark}
	
	So far we know of no graphs that satisfy Theorem~\ref{thm:complement}; it is an open problem to determine their existence.

	    Let $\Gamma$ be a graph in $\mathcal G(\theta_0,\theta_1,\theta_2)$ with two valencies (say $k_1,k_2$).
		We can also consider the graph $\Gamma^\prime$ obtained by switching with respect to $V_1$ in $\Gamma$.
		Let $Q$ and $Q^\prime$ be the quotient matrices of the valency partitions of $\Gamma$ and $\Gamma^\prime$ respectively,
	    $Q = \begin{pmatrix} k_{11} & k_{12} \\ k_{21} & k_{22} \end{pmatrix}$ and $Q' = \begin{pmatrix} k_{11} & n_2 - k_{12} \\ n_1 - k_{21} & k_{22} \end{pmatrix}$.
		In the proposition below, we show that $\Gamma^\prime$ has at most $4$ distinct eigenvalues.
		
		\begin{proposition}\label{pro:biswitch}
			Let $\Gamma$ be a graph in $\mathcal G(\theta_0,\theta_1,\theta_2)$, 
			so that its adjacency matrix $A$ under the valency partition has block form:
			\[
			A = \begin{pmatrix}
			A_1 & B^{\top}\\
			B & A_2
			\end{pmatrix}
			\text{~~with quotient matrix~~}
			Q =\begin{pmatrix}
			k_{11} & k_{12}\\
			k_{21} & k_{22}
			\end{pmatrix},
			\]
			and let $\Gamma^\prime$ be obtained by switching the vertices with respect to $V_1$.
			Then $\Gamma^\prime$ has at most $4$ distinct eigenvalues.
			Namely, $\theta_1$, $\theta_2$, and
			\[
				\frac{k_{11}+k_{22}}{2} \pm \frac{\sqrt{ (k_{11}-k_{22})^2+4(n_1-k_{21})(n_2-k_{12})}}{2}.
			\]
			Moreover, for $k_{11} + k_{22} = \theta_0 + \theta$ and $i \in \{1,2\}$, the multiplicity of $\theta_i$ in $\overline \Gamma$ is at least $m_i-1$ if $\theta_i = \theta$ or at least $m_i$ if $\theta_i \ne \theta$.
		\end{proposition}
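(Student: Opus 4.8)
The plan is to realise the switched adjacency matrix explicitly and to transport most eigenvectors of $\Gamma$ to $\Gamma'$ by a single sign change. Write $D = \operatorname{diag}(-I_{n_1}, I_{n_2})$. Switching with respect to $V_1$ replaces the blocks $B$ and $B^\top$ by $J-B$ and $J-B^\top$, so
\[
A' = \begin{pmatrix} A_1 & J - B^\top \\ J - B & A_2 \end{pmatrix}.
\]
First I would observe that the valency partition $\{V_1,V_2\}$, which is equitable for $\Gamma$ by Theorem~\ref{prop2val}, stays equitable for $\Gamma'$: a vertex of $V_1$ now has $n_2 - k_{12}$ neighbours in $V_2$ and a vertex of $V_2$ now has $n_1 - k_{21}$ neighbours in $V_1$, so the quotient matrix is exactly the $Q'$ displayed before the statement. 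Hence the two-dimensional space $W = \operatorname{span}(\mathbf{1}_{V_1}, \mathbf{1}_{V_2})$ is $A'$-invariant and $A'|_W$ contributes precisely the two eigenvalues of $Q'$; computing these from $\operatorname{tr} Q' = k_{11}+k_{22}$ and $\det Q' = k_{11}k_{22} - (n_1-k_{21})(n_2-k_{12})$ yields the displayed formula, since the discriminant $(\operatorname{tr} Q')^2 - 4\det Q'$ simplifies to $(k_{11}-k_{22})^2 + 4(n_1-k_{21})(n_2-k_{12})$.

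The core of the argument is a transport map. Let $v = (v_1,v_2)^\top$ be a $\theta_i$-eigenvector of $A$ (for $i \in \{1,2\}$) orthogonal to both $\mathbf{1}_{V_1}$ and $\mathbf{1}_{V_2}$, i.e. $\mathbf{1}^\top v_1 = \mathbf{1}^\top v_2 = 0$. I claim $Dv = (-v_1, v_2)^\top$ is a $\theta_i$-eigenvector of $A'$. A direct block multiplication gives
\[
A'(Dv) = \begin{pmatrix} -A_1 v_1 + J v_2 - B^\top v_2 \\ -J v_1 + B v_1 + A_2 v_2 \end{pmatrix},
\]
and the orthogonality conditions force $J v_1 = J v_2 = 0$; using $A_1 v_1 + B^\top v_2 = \theta_i v_1$ and $B v_1 + A_2 v_2 = \theta_i v_2$, the right-hand side collapses to $\theta_i (Dv)$. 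Since $D$ is invertible and clearly maps $W^\perp$ into $W^\perp$, the map $v \mapsto Dv$ injects $E_i \cap W^\perp$ (where $E_i$ is the $\theta_i$-eigenspace of $\Gamma$) into the $\theta_i$-eigenspace of $\Gamma'$, landing inside $W^\perp$.

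Finally I would carry out the dimension count. Because $W$ is $A$-invariant and $A$ is symmetric, $W^\perp$ is $A$-invariant and each eigenspace splits as $E_i = (E_i \cap W) \oplus (E_i \cap W^\perp)$. Since $\Gamma$ is biregular the Perron eigenvector $\alpha$ is constant on each $V_i$, hence lies in $W$, and $Q$ has eigenvalues $\theta_0$ and $\theta$; thus $W = \operatorname{span}(\alpha) \oplus (W \cap E_\theta)$, so $\dim(E_i \cap W) = 1$ when $\theta_i = \theta$ and $0$ otherwise. Therefore $\dim(E_i \cap W^\perp)$ equals $m_i - 1$ if $\theta_i = \theta$ and $m_i$ if $\theta_i \ne \theta$, which is exactly the asserted lower bound on the multiplicity of $\theta_i$ in $\Gamma'$. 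These quantities sum to $m_1 + m_2 - 1 = n-2 = \dim W^\perp$, so the images $D(E_1 \cap W^\perp) \oplus D(E_2 \cap W^\perp)$ fill all of $W^\perp$; together with the two eigenvalues of $Q'$ on $W$ they exhaust the spectrum of $\Gamma'$, proving that $\Gamma'$ has at most four distinct eigenvalues of the stated form. The only delicate point is the bookkeeping in this last step—tracking which of $\theta_1, \theta_2$ coincides with $\theta$ and hence cedes one dimension to $W$—but this is precisely the case distinction recorded in the statement.
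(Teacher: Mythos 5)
Your proof is correct and follows the same strategy as the paper: equitability of the valency partition (Theorem~\ref{prop2val}) gives the invariant subspace $W$ on which $A'$ acts via $Q'$, and the remaining eigenvalues are inherited from $\Gamma$ on $W^\perp$. The only difference is that the paper outsources the eigenvector-transport step to Corollary~3.2 of Muzychuk and Klin~\cite{MK}, whereas you prove it from scratch with the sign-flip map $v \mapsto Dv$ and the accompanying dimension count; your version is self-contained and the computation checks out.
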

		\begin{proof}
			By Theorem~\ref{prop2val} the valency partition is equitable.
			Apply Corollary 3.2 of Muzychuk and Klin~\cite{MK} to find that $\Gamma^\prime$ has eigenvalues $\theta_1$ and $\theta_2$ where the multiplicity of $\theta_i$ is at least $m_i-1$ if $\theta_i = \theta$ or at least $m_i$ if $\theta_i \ne \theta$.
			Moreover, the remaining two eigenvalues are the eigenvalues of the quotient matrix $Q^\prime$ of the corresponding equitable partition of $\Gamma^\prime$,
				$\begin{pmatrix} 
					k_{11} & n_2 - k_{12} \\ 
					n_1 - k_{21} & k_{22} 
				\end{pmatrix}$.
		\end{proof}
    	
		Suppose that $\Gamma^\prime$ has precisely three distinct eigenvalues.
		By Proposition~\ref{pro:biswitch}, $\Gamma^\prime$ is in $\mathcal G(\theta_0^\prime, \theta_1,\theta_2)$.
	    On the one hand, if $\theta_0' = \theta_0$, then $Q$ and $Q'$ have same eigenvalues.
		Hence we have the equality $\det Q = \det Q' = k_{11} k_{22} -k_{12} k_{21} = k_{11} k_{22} -(n_1 - k_{21})(n_2 - k_{12})$ and thus, $n_2 = 2k_{12}$ and $n_1 = 2k_{21}$.
    
	    On the other hand, if $\theta_0' \neq \theta_0$ then, without loss generality, we can assume that $Q$ has eigenvalues $\theta_0$ and $\theta_2$, and that $Q'$ has eigenvalues $\theta_0'$ and $\theta_1$.
	
		We consider the special case $k_{11} = k_{22}$, $\alpha_1 = s\alpha_2$, and $\theta_2 = -(s-1)\theta_1-s$.
		Similar to Proposition~\ref{pro:comp}, above, we describe an infinite family of feasible valency-arrays and spectra for biregular graphs with three eigenvalues such that switching with respect to $V_1$ gives another graph having three distinct eigenvalues.
		\begin{proposition}\label{pro:switch}
			For integers $s \geqslant 2$ and $t \geqslant 1$, set $\theta_1 = st$, $\theta_2 = -(s-1)\theta_1-s$, $\theta_0 = s(2st+1)(st+1-t)$.
			Further set $n_1 = (2st+1)(st-t+1)-t$, $n_2 = s^2 n_1$, $k_1=(s^{2}(st+1)+s^{2}t)(st-t+1)$, and $k_2 = (s^{2}t+st+1)(st-t+1)$.
			Suppose $\Gamma$ has valency-array $(n_1,n_2; k_1, k_2)$ and spectrum $\{ [\theta_0]^1, [\theta_1]^{m_1}, [\theta_2]^{m_2} \}$ and let $\Gamma^\prime$ be the graph obtained by switching the vertices in $V_1$ of $\Gamma$.
			Then $\Gamma^\prime$ is in $\mathcal G(\theta_0 -s(st+1),\theta_1,\theta_2)$.
			Moreover, the valency-array and spectrum are feasible.
		\end{proposition}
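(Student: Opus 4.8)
The plan is to imitate the proof of Proposition~\ref{pro:comp}, with Proposition~\ref{pro:biswitch} playing the role that Proposition~\ref{pro:bicomp} played there. The computational heart is to pin down the quotient matrix $Q$ of the valency partition of $\Gamma$ from the prescribed data, and then to read the two non-inherited eigenvalues of $\Gamma^\prime$ off the switched quotient matrix $Q^\prime$, checking that they are exactly $\theta_1$ and $\theta_0-s(st+1)$ so that $\Gamma^\prime$ ends up with only three distinct eigenvalues.

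First I would record the invariants forced by the data. Using $d_x=\alpha_x^2-\theta_1\theta_2$ and the two given valencies, one computes $\theta_1\theta_2=-s^2t(st-t+1)$, then $\alpha_2^2=(st+1)(st-t+1)$ and $\alpha_1^2=s^2(st+1)(st-t+1)$, so that $\alpha_1=s\alpha_2$ (this is precisely the special case flagged before the statement) and $\alpha_1\alpha_2=s(st+1)(st-t+1)$. I would also note the two bookkeeping identities $\theta_1=st$ and $\theta_2-\theta_1=-s(st+1)$, which are what the asserted eigenvalues must match at the end. Because $\alpha_1=s\alpha_2$, the factor $\alpha_1-\alpha_2=(s-1)\alpha_2$ cancels cleanly in the formulas of Theorem~\ref{prop2val}(i), giving $k_{11}=k_{22}=s^2t(st-t+1)=-\theta_1\theta_2$, together with $k_{12}=s^2(st+1)(st-t+1)=\alpha_1^2$ and $k_{21}=(st+1)(st-t+1)=\alpha_2^2$. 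In particular $Q$ is a nonnegative integer matrix with $\operatorname{tr}Q=\theta_0+\theta_2$, so the second eigenvalue of $Q$ is $\theta_2$, placing us in the case $\theta=\theta_2$ of Proposition~\ref{pro:biswitch}. Since Theorem~\ref{prop2val} also guarantees the valency partition is equitable, $\Gamma$ has the block form required by Proposition~\ref{pro:biswitch}, and $\Gamma^\prime$ inherits $\theta_1$ and $\theta_2$ as eigenvalues alongside the two eigenvalues of $Q^\prime$.

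The decisive step is to identify those two eigenvalues of $Q^\prime$. As $k_{11}=k_{22}$, Proposition~\ref{pro:biswitch} gives them as $k_{11}\pm\sqrt{(n_1-k_{21})(n_2-k_{12})}$. Using the explicit entries above I would compute $n_1-k_{21}=t(s-1)(st+1)$ and $n_2-k_{12}=s^2t(s-1)(st+1)$, whose product is the perfect square $\bigl(st(s-1)(st+1)\bigr)^2$. Hence the eigenvalues are $k_{11}\pm st(s-1)(st+1)$, and expanding shows the smaller equals $st=\theta_1$ while the larger equals $\theta_0-s(st+1)$; the latter can also be seen painlessly from $\operatorname{tr}Q^\prime=\operatorname{tr}Q=\theta_0+\theta_2$ together with $\theta_2-\theta_1=-s(st+1)$, which forces the companion eigenvalue to be $\theta_0+\theta_2-\theta_1=\theta_0-s(st+1)$. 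Thus $\Gamma^\prime$ has exactly the three distinct eigenvalues $\theta_0-s(st+1)>\theta_1>\theta_2$. It is nonregular since the two switched valencies differ by $(n_2-k_{12})-(n_1-k_{21})=(s^2-1)t(s-1)(st+1)\ne 0$, and it is connected because a disconnected graph whose eigenvalues all lie in $\{\theta_0-s(st+1),\theta_1,\theta_2\}$ would contain a complete-graph component, forcing $\theta_2=-1$, contrary to $\theta_2=-s(st-t+1)\leqslant -4$. Therefore $\Gamma^\prime\in\mathcal G(\theta_0-s(st+1),\theta_1,\theta_2)$.

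Finally, for feasibility I would verify that $n_1$ and $n_2=s^2n_1$ are positive integers and that the multiplicities $m_1,m_2$ computed from Eq.~\eqref{mult} are positive integers, exactly as in Proposition~\ref{pro:comp}; these are routine integrality checks that I would leave to the reader. The one place that genuinely requires care is the perfect-square identity $(n_1-k_{21})(n_2-k_{12})=(k_{11}-\theta_1)^2$: the entire conclusion that $\Gamma^\prime$ has three rather than four distinct eigenvalues rests on this discriminant collapsing so that one eigenvalue of $Q^\prime$ coincides with the inherited $\theta_1$. This is the main obstacle, and it is purely a matter of carrying the polynomial algebra through without error.
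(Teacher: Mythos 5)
Your proposal is correct and follows essentially the same route as the paper's proof: apply Theorem~\ref{prop2val} to get $\operatorname{tr}Q=\theta_0+\theta_2$, invoke Proposition~\ref{pro:biswitch}, and verify that the eigenvalues of $Q'$ are $\theta_1$ and $\theta_0-s(st+1)$. The paper dismisses this last step as ``straightforward algebraic manipulations,'' whereas you carry the computation out explicitly (and correctly, including the key perfect-square identity $(n_1-k_{21})(n_2-k_{12})=\bigl(st(s-1)(st+1)\bigr)^2$), and additionally check connectivity and nonregularity of $\Gamma'$, which the paper leaves implicit.
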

		\begin{proof}
			By Theorem~\ref{prop2val} the trace of the quotient matrix of the valency partition is $\theta_0 + \theta_2$.
			Then by Proposition~\ref{pro:biswitch} it suffices to show that the largest eigenvalue of $\Gamma^\prime$ is $\theta_0 -s(st+1)$, that is, we want to show
			\[
				\theta_0 -s(st+1), \theta_1 = \frac{k_{11}+k_{22}}{2} \pm \frac{\sqrt{ (k_{11}-k_{22})^2+4(n_1-k_{21})(n_2-k_{12})}}{2}.
			\]
			Using Theorem~\ref{prop2val}, we can write each of the terms $k_{11}$, $k_{12}$, $k_{21}$, $k_{22}$, $n_1$, and $n_2$ in terms of $s$ and $t$. 
			The equality then follows from straightforward algebraic manipulations.
			
			To show that the valency-array and spectrum are feasible, one needs to check that Theorem~\ref{prop2val} is satisfied such that $Q$ is a nonnegative integer matrix and the quantities $n_1$, $n_2$, $m_1$, and $m_2$ are positive integers.
			We leave this task to the reader.
		\end{proof}
	   		%
	\begin{remark}
		In Proposition~\ref{pro:switch}, the case having smallest $n$ is obtained when $s=2$ and $t=1$.
		This gives $n=45$ with spectrum $\{[20]^{1},[2]^{26},[-4]^{18} \}$, and $\Gamma'$ has spectrum $\{ [14]^{1},[2]^{27},[-4]^{17} \}$.
		 (See Table~\ref{tab:fea} for this case and the case on 80 vertices.)
    \end{remark}
	
\section{Constructions using star complements} 
\label{sec:computational_constructions}

In this section we describe constructions for certain graphs with three eigenvalues using so-called `star complements'.
We also examine the structure of biregular graphs in $\mathcal G(\theta_0,\theta_1,\theta_2)$ where $\theta_1 + \theta_2 = -1$.
We find that such graphs have properties which make it convenient to use star complements to attempt to construct them. 
In particular we show the existence of a case which was an open problem from Van Dam's paper~\cite{Dam3ev}.
We also construct a new graph having three valencies and three eigenvalues.

\subsection{Star complements} 
\label{sub:star_complements}

	To describe our new constructions we first recall the notion of the \emph{star complement}.
	Let $\theta$ be an eigenvalue of an $n$-vertex graph $\Gamma$ and suppose that 
	$\theta$ has multiplicity $m$.
	Define a \textbf{star set} for $\theta$ to be a subset $X \subset V(\Gamma)$
	such that $|X| = m$ and $\theta$ is not an eigenvalue of the graph induced on $V(\Gamma) - X$.
	Now we can state the \emph{Reconstruction Theorem} (see \cite[Theorems 7.4.1 and 7.4.4]{crs97}).

\begin{theorem}\label{thm:reconst}
	Let $X$ be a subset of vertices of a graph $\Gamma$ and suppose that $\Gamma$ has adjacency matrix
	\[
		\begin{pmatrix}
			A_X & B^\top \\
			B & C
		\end{pmatrix},
	\]
	where $A_X$ is the adjacency matrix of the subgraph induced by $X$.
	Then $X$ is a star set for $\theta$ if and only if $\theta$ is not an eigenvalue of $C$ and
	$\theta I - A_X = B^\top (\theta I - C)^{-1} B$.
\end{theorem}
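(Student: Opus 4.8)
The plan is to read off the $\theta$-eigenspace of $\Gamma$ directly from the block decomposition of its adjacency matrix. Since the condition that $\theta$ is not an eigenvalue of $C$ is part of the very definition of a star set (recall that $X$ is a star set precisely when $|X|=m$ and $\theta\notin\operatorname{spec}(C)$, where $C$ is the adjacency matrix of the graph induced on $V(\Gamma)-X$), the substantive task is to show that, under the standing hypothesis $\theta\notin\operatorname{spec}(C)$, the matrix identity $\theta I - A_X = B^\top(\theta I - C)^{-1}B$ is equivalent to the equality $|X|=m$.

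First I would write a putative $\theta$-eigenvector of $\Gamma$ in block form as $\binom{x}{y}$ conforming to the partition $(X,\, V(\Gamma)-X)$. The eigenvalue equation $A\binom{x}{y}=\theta\binom{x}{y}$ then splits into the two relations
\[
A_X x + B^\top y = \theta x, \qquad Bx + Cy = \theta y.
\]
Because $\theta\notin\operatorname{spec}(C)$, the matrix $\theta I - C$ is invertible, so the second relation determines $y$ uniquely as $y=(\theta I-C)^{-1}Bx$. Substituting this into the first relation shows that $\binom{x}{y}$ is a $\theta$-eigenvector if and only if $x$ lies in the kernel of
\[
M := \theta I - A_X - B^\top(\theta I - C)^{-1}B.
\]

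The key observation is then that the assignment $x\mapsto \binom{x}{(\theta I-C)^{-1}Bx}$ is a linear bijection from $\ker M$ onto the $\theta$-eigenspace of $\Gamma$: it is injective because extracting the first block returns $x$, and the computation above shows its image is exactly the eigenspace. Consequently the multiplicity $m$ of $\theta$ equals $\dim\ker M$. Since $M$ is a square matrix of size $|X|$, we have $\dim\ker M = |X|$ if and only if $M=0$, i.e.\ if and only if the stated matrix identity holds. This yields both implications at once: if $X$ is a star set then $|X|=m=\dim\ker M$ forces $M=0$; conversely, if $\theta\notin\operatorname{spec}(C)$ and $M=0$, then $\ker M$ is all of the $|X|$-dimensional space, so $m=|X|$ and $X$ is a star set.

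The only real care needed is bookkeeping: one must remember that ``$X$ is a star set'' already packages the condition $\theta\notin\operatorname{spec}(C)$ together with $|X|=m$, so that the nontrivial content is carried entirely by the eigenspace-to-kernel bijection. That bijection, and hence the whole argument, hinges on the invertibility of $\theta I - C$; this is the one place where the hypothesis $\theta\notin\operatorname{spec}(C)$ is indispensable, and where the argument would collapse if it were dropped.
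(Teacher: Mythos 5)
Your argument is correct and complete: under the hypothesis $\theta\notin\operatorname{spec}(C)$, the Schur-complement/eigenvector correspondence $x\mapsto\bigl(x,\ (\theta I-C)^{-1}Bx\bigr)$ does give a linear bijection from $\ker\bigl(\theta I - A_X - B^\top(\theta I-C)^{-1}B\bigr)$ onto the $\theta$-eigenspace of $\Gamma$, and the equivalence $|X|=m\iff M=0$ for the $|X|\times|X|$ matrix $M$ finishes both directions. Note that the paper does not prove this statement itself — it is quoted from Cvetkovi\'c, Rowlinson and Simi\'c \cite{crs97} (Theorems 7.4.1 and 7.4.4) — and your proof is essentially the standard one found there, so there is nothing in the paper to compare against beyond the citation.
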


The graph $\regSub$ induced on $V(\Gamma) - X$ (having adjacency matrix $C$ in Theorem~\ref{thm:reconst}) is called the \textbf{star complement} of $\theta$.
Star sets and star complements exist for any eigenvalue and any graph and moreover, for $\theta \not \in \{0,1\}$, it can be shown that $\regSub$-neighbourhoods of the vertices of $X$ are non-empty and distinct~\cite[Chapter 7]{crs97}.
For vectors in $\mathbf{v}, \mathbf{w} \in \Z^{n-m}$, define the bilinear map $\inprod{\mathbf{v},\mathbf{w}} := \mathbf{v}^\top (\theta I - C)^{-1} \mathbf{w}$.
Let $V$ be the set of vectors $\mathbf{v} \in \{0,1\}^{n-m}$ satisfying $\inprod{\mathbf{v},\mathbf{v}} = \theta$.
Form a graph having $V$ as its vertex set where two vectors $\mathbf{v}$ and $\mathbf{w}$ are adjacent if $\inprod{\mathbf{v},\mathbf{w}} \in \{0, -1\}$.
This graph is known as the \textbf{compatibility graph} for $\regSub$.
Cliques of size $m$ in the compatibility graph then give the columns of the matrix $B$ as in Theorem~\ref{thm:reconst}.

\begin{theorem}\label{thm:30}
	There exist at least $21$ graphs having valency-array
	$(15,15;14,8)$ and spectrum $\{ [12]^1, [2]^{15}, [-3]^{14} \}$.
\end{theorem}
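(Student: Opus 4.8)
The plan is to apply the star complement method (Theorem~\ref{thm:reconst}). Before any search, I would first pin down the local structure using Theorem~\ref{prop2val}. Here $\theta_0 = 12$, $\theta_1 = 2$, $\theta_2 = -3$, so $\theta_1\theta_2 = -6$ and the Perron entries satisfy $\alpha_1^2 = k_1 + \theta_1\theta_2 = 8$ and $\alpha_2^2 = k_2 + \theta_1\theta_2 = 2$; thus $\alpha_1 = 2\alpha_2$ and $\alpha_1\alpha_2 = 4 = -\theta_1(\theta_2+1)$. Theorem~\ref{prop2val}(i),(iii) then forces the valency partition to be equitable with quotient matrix $\left(\begin{smallmatrix} 10 & 4 \\ 4 & 4 \end{smallmatrix}\right)$, whose eigenvalues are $\theta_0 = 12$ and $\theta_1 = 2$. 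In particular the subgraph induced on the $15$ vertices of valency $14$ is $10$-regular and that on the $15$ vertices of valency $8$ is $4$-regular, and every pair of vertices $x,y$ has a prescribed number $(\theta_1+\theta_2)A_{x,y} + \alpha_x\alpha_y$ of common neighbours. Note that $\theta_1 + \theta_2 = -1$, so $A^2 + A - 6I = \alpha\alpha^{\top}$ has rank one, which is the feature exploited throughout this section.

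Next I would fix an eigenvalue on which to build a star complement. Taking $\theta = 2$ (of multiplicity $m = 15$), a star complement is an induced subgraph $\regSub$ on $30 - 15 = 15$ vertices with $2 \notin \operatorname{spec}(\regSub)$. Given a candidate $\regSub$ with adjacency matrix $C$, I would compute $(2I - C)^{-1}$, form the set $V$ of vectors $\mathbf{v} \in \{0,1\}^{15}$ with $\inprod{\mathbf{v},\mathbf{v}} = 2$, and build the compatibility graph in which $\mathbf{v} \sim \mathbf{w}$ whenever $\inprod{\mathbf{v},\mathbf{w}} \in \{0,-1\}$. By Theorem~\ref{thm:reconst}, every clique of size $15$ in this compatibility graph yields a matrix $B$, and hence a graph having $2$ as an eigenvalue of multiplicity $15$ with star complement $\regSub$.

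Each resulting candidate would then be filtered: compute its spectrum and retain exactly those with spectrum $\{[12]^1,[2]^{15},[-3]^{14}\}$ and valency-array $(15,15;14,8)$. Finally I would sort the survivors into isomorphism classes (for instance by a canonical-form computation) and exhibit at least $21$ pairwise non-isomorphic examples. The constraints derived above---the fixed quotient matrix and the common-neighbour counts---should be used both to restrict the choice of $\regSub$ to a small number of admissible $15$-vertex graphs and to prune the clique search.

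The main obstacle is computational feasibility rather than conceptual difficulty: the compatibility graph can be very large and enumerating its $15$-cliques is expensive, so the viability of the method hinges on a good choice of star complement $\regSub$ together with aggressive pruning using the structural data from Theorem~\ref{prop2val}. A secondary but essential point is to certify rigorously that the $21$ graphs produced really are pairwise non-isomorphic.
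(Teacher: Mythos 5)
Your proposal is correct and follows essentially the same route as the paper: compute the equitable quotient matrix $\left(\begin{smallmatrix}10&4\\4&4\end{smallmatrix}\right)$ from Theorem~\ref{prop2val}, then run the star complement method for the eigenvalue $2$ with a $15$-vertex star complement, enumerate $15$-cliques in the compatibility graph, and sort the results into isomorphism classes. The paper's only additional concrete step is to restrict the candidate star complements to the induced subgraphs on $V_1$ or $V_2$ (regular of valency $10$ or $4$, with smallest eigenvalue at least $-3$ and second largest less than $2$ by interlacing, giving $94+43$ candidates), which is exactly the kind of pruning via the structural data that you describe and is legitimate here since only a lower bound on the number of graphs is claimed.
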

\begin{proof}
	Let $\Gamma$ be a graph having the assumed valency-array and spectrum so that its valency partition has the quotient matrix
	$\begin{pmatrix}
	10 & 4\\
	4 & 4
	\end{pmatrix}$.
	Now we assume that the star complement of $\Gamma$ for the eigenvalue $2$ is the subgraph induced on either $V_1$ or $V_2$.
	Next we check all possibilities for these subgraphs using \texttt{Magma}~\cite{Magma} and \texttt{nauty}~\cite{nau}.
	Such graphs are regular with valency equal to either $4$ or $10$.
	By interlacing, any such star complement must have smallest eigenvalue at least $-3$ and second largest eigenvalue less than $2$.
	There are $94$ (resp. $43$) regular graphs on $15$ vertices with valency $10$ (resp. $4$), smallest eigenvalue at least $-3$, and second largest eigenvalue less than $2$.
	For each of these potential star complements, we construct the compatibility graph and search for cliques of size $15$.
	This process produced a list of $21$ non-isomorphic graphs with the assumed valency-array and spectrum.
\end{proof}

We remark that the existence of graphs with the valency-array and spectrum given in Theorem \ref{thm:30}, was an open case from Van Dam's paper \cite{Dam3ev}.

\begin{theorem}\label{thm:36}
	There are precisely two graphs having valency-array $(18,9,9; 24,14,8)$ and spectrum $\{ [20]^1, [2]^{17}, [-3]^{18} \}$.
\end{theorem}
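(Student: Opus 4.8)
The plan is to mimic the star-complement argument used for Theorem~\ref{thm:30}, but I would first pin down the combinatorial structure forced by the spectrum. Since $\Gamma$ has three valencies, by Van Dam's theorem~\cite{Dam3ev} the valency partition $\{V_1,V_2,V_3\}$ (with $|V_1|=18$, $|V_2|=|V_3|=9$ and valencies $24,14,8$) is equitable; let $Q=(b_{ij})$ be its quotient matrix. Using $\alpha_x^2=d_x-\theta_1\theta_2=d_x+6$ one finds $\alpha_1:\alpha_2:\alpha_3=3:2:1$, and the eigenvector equation $Q(3,2,1)^\top=20\,(3,2,1)^\top$ together with the row-sum conditions $b_{i1}+b_{i2}+b_{i3}=k_i$ and the handshake relations $n_ib_{ij}=n_jb_{ji}$ reduces the system to a single free parameter $b_{13}$. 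I would then observe that nonnegativity and integrality of the entries (in particular $b_{23}=12-4b_{13}\geqslant 0$ and $b_{22}=8b_{13}-22\geqslant 0$) force $b_{13}=3$, which determines
\[
Q=\begin{pmatrix}15&6&3\\12&2&0\\6&0&2\end{pmatrix}.
\]

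In particular the induced subgraph on $V_1$ is $15$-regular, the induced subgraphs on $V_2$ and $V_3$ are each $2$-regular, and there are no edges between $V_2$ and $V_3$. Hence the subgraph induced on $V_2\cup V_3$ is a disjoint union of cycles, i.e. a $2$-regular graph on $18$ vertices, whose eigenvalues all lie in $[-2,2]$ and therefore never equal $-3$. This is the crucial point: it shows that $V_1$ is always a star set for $\theta_2=-3$ (which has multiplicity $18=|V_1|$) and that the associated star complement is exactly this $2$-regular graph. Consequently, by the Reconstruction Theorem (Theorem~\ref{thm:reconst}), every graph with the prescribed valency-array and spectrum arises from a $2$-regular star complement $C$ on $18$ vertices together with a choice of the $18$ vectors encoding the $C$-neighbourhoods of the vertices of $V_1$.

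The computation then proceeds exactly as in Theorem~\ref{thm:30}. The candidate star complements are precisely the disjoint unions of cycles on $18$ vertices, one for each partition of $18$ into parts at least $3$, a small finite list. For each such $C$ the bilinear form $\langle \mathbf v,\mathbf w\rangle=\mathbf v^\top(-3I-C)^{-1}\mathbf w$ is well defined, and I would build the compatibility graph on the $0/1$-vectors $\mathbf v$ of weight $b_{12}+b_{13}=9$ satisfying $\langle\mathbf v,\mathbf v\rangle=-3$, joining $\mathbf v$ and $\mathbf w$ whenever $\langle\mathbf v,\mathbf w\rangle\in\{0,-1\}$. Cliques of size $18$ yield the matrix $B$ of Theorem~\ref{thm:reconst} and hence a candidate graph; I would retain only those whose spectrum is $\{[20]^1,[2]^{17},[-3]^{18}\}$ and whose valency-array is $(18,9,9;24,14,8)$, reducing modulo isomorphism with \texttt{nauty}. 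The assertion is that exactly two nonisomorphic graphs survive.

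The main obstacle is guaranteeing that this finite search is genuinely exhaustive and correctly implemented: one must be sure that enumerating all $2$-regular graphs on $18$ vertices really captures every possibility, which is precisely what the spectral observation above secures, and then carry out the clique enumeration in each of the resulting (potentially large) compatibility graphs. A secondary point worth recording is that no alternative star set can produce additional graphs: because we are classifying graphs with a \emph{fixed} valency-array, the induced subgraph on $V_2\cup V_3$ is forced to be the $2$-regular star complement, so the enumeration over $2$-regular complements is complete.
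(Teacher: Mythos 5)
Your proposal is correct and follows essentially the same route as the paper: determine the equitable quotient matrix, observe that the induced subgraph on $V_2\cup V_3$ is a disjoint union of cycles and hence has all eigenvalues in $[-2,2]$, so it is forced to be a star complement for $-3$ (which is exactly how the paper justifies completeness of the search), and then enumerate candidate complements and look for $18$-cliques in the compatibility graphs. Two minor remarks: the paper exploits $b_{23}=b_{32}=0$ to cut the candidate complements down to the $4\times 4=16$ pairs of cycle-unions on $9$ vertices, whereas you enumerate all $2$-regular graphs on $18$ vertices (a valid but larger superset, later filtered by the valency-array); and your formula should read $\alpha_x^2=d_x+\theta_1\theta_2=d_x-6$ rather than $d_x-\theta_1\theta_2$, though the ratio $\alpha_1:\alpha_2:\alpha_3=3:2:1$ you derive from it is the correct one.
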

\begin{proof}
	Let $\Gamma$ be a graph having the assumed valency-array and spectrum so that its valency partition has quotient matrix
	\[
		\begin{pmatrix}
		15 & 6 & 3 \\
		12 & 2 & 0 \\
		6 & 0 & 2
		\end{pmatrix}.
	\]
	Hence, the graphs induced on each of the subsets $V_2$ and $V_3$ are disjoint unions of cycles.
	On $9$ vertices, there are only $4$ graphs that are unions of cycles.
	Therefore, there are $16$ possible graphs for the subgraph $\regSub$ induced on the set $V_2 \cup V_3$.
	Each of these $16$ graphs is a potential star complement for the eigenvalue $-3$.
	For each of these potential star complements, we construct the compatibility graph and search for cliques of size $18$.
	This process produced a list of two non-isomorphic graphs with the assumed valency-array and spectrum.
	
	Now, unlike in the proof of Theorem~\ref{thm:30} (since the disjoint union of cycles does not have $-3$ as an eigenvalue), the graph induced on the set $V_2 \cup V_3$ must be a star complement for the eigenvalue $-3$.
	This means that there can exist no other graphs with the assumed valency-array and spectrum.
\end{proof}

One of the graphs, $\Gamma_1$, (say) satisfying the assumption of Theorem~\ref{thm:36} was constructed in \cite{cds99}.
The subgraphs of $\Gamma_1$ induced on each of $V_2$ and $V_3$ consist of three triangles (or $3$-cycles) and the subgraph of $\Gamma_1$ induced on $V_1$ consists of the complement of six triangles.
The other graph, $\Gamma_2$, (say) satisfying the assumption of Theorem~\ref{thm:36} was previously unknown.
The subgraphs of $\Gamma_2$ induced on $V_2$ and $V_3$ each consist of a triangle and a hexagon and the subgraph of $\Gamma_1$ induced on $V_1$ consists of the complement of two triangles and two hexagons.


\subsection{Graphs in $\mathcal G(\theta_0,\theta_1,\theta_2)$ with $\theta_1 + \theta_2 = -1$} 
\label{sub:_theta_1_theta_2_1}

In this section we examine biregular graphs in $\mathcal G(\theta_0,\theta_1,\theta_2)$ where $\theta_1 + \theta_2 = -1$.
We find that such graphs have properties which make it convenient to use star complements to attempt to construct them.
Compare Eq.~\eqref{eq-like-starcomp} to the equation in Theorem~\ref{thm:reconst}.
Indeed, above we used the star complement method to construct such graphs (see Theorem~\ref{thm:30}).
We denote by $\mathbf{j}$ the `all ones' (column) vector and we define the matrix $J:= \mathbf{j} \mathbf{j}^\top$.
First we will need a lemma from linear algebra. 

\begin{lemma}[See \cite{Buss00,Hae}]\label{two-ev-lemma}
Let $M$ be a symmetric $n \times n$ matrix with a symmetric partition 
\[
M=
\begin{pmatrix}
M_1 & N\\
N^{\top} & M_2\\
\end{pmatrix},
\]
where $M_1$ has order, say, $n_1$. 
Suppose that $M$ has just two distinct eigenvalues 
$r>s$, with multiplicities $f$ and $n-f$. 
Let $\eta_1\geqslant \dots \geqslant \eta_{n_1}$ be 
the eigenvalues of $M_1$ and let $\mu_1\geqslant \dots \geqslant \mu_{n-n_1}$ be the eigenvalues of $M_2$. 
Then $r\geqslant \eta_i\geqslant s$ for $i=1,\dots,n_1$, and 
\[
\mu_i =\left \{ \begin{aligned}
r{\rm~if~}1\leqslant i\leqslant f-n_1,\\
s{\rm~if~}f+1\leqslant i\leqslant n-n_1,\\
r+s-\eta_{f-i+1}{\rm~otherwise}.
\end{aligned}
\right.
\]
\end{lemma}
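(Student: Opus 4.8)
The plan is to exploit the fact that $M$ has only two distinct eigenvalues $r>s$, which forces $(M-rI)(M-sI)=0$. The statement about the $\eta_i$ lying in $[s,r]$ is just Cauchy interlacing applied to the principal submatrix $M_1$ of $M$, so I would dispose of that first and concentrate on the formula for the spectrum of $M_2$. The natural device is to compare the spectra of $M_1$ and $M_2$ via the block structure together with the rank and trace constraints coming from the minimal polynomial $(x-r)(x-s)$.

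First I would record that since $M$ has eigenvalues $r$ (multiplicity $f$) and $s$ (multiplicity $n-f$), the matrix $M-sI$ is positive semidefinite of rank $f$ and $rI-M$ is positive semidefinite of rank $n-f$; equivalently $(M-rI)(M-sI)=0$. Writing this relation out in blocks gives three matrix equations relating $M_1$, $M_2$, and $N$; in particular the off-diagonal block equation reads $N^\top(M_1-rI)+(M_2-sI)N^\top=0$ (and its transpose), and the diagonal blocks give $(M_1-rI)(M_1-sI)+NN^\top=0$ and $(M_2-rI)(M_2-sI)+N^\top N=0$. The key consequence I would extract is that $NN^\top = -(M_1-rI)(M_1-sI)=(rI-M_1)(M_1-sI)$, whose eigenvalues are exactly $(r-\eta_i)(\eta_i-s)\geqslant 0$, and likewise $N^\top N=(rI-M_2)(M_2-sI)$.

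The main engine is then the fact that $NN^\top$ and $N^\top N$ have the same nonzero eigenvalues (with multiplicities), the difference in the two spectra being accounted for solely by zero eigenvalues, whose counts are governed by the sizes $n_1$ and $n-n_1$ and by $\rank N$. I would compute $\rank N$ from the eigenvalue count: the eigenvalues $\eta_i$ equal to $r$ contribute kernel vectors to $NN^\top$, and those equal to $s$ likewise, so the number of $\eta_i$ strictly inside $(s,r)$ equals $\rank N$. Matching the nonzero eigenvalues $(r-\eta_i)(\eta_i-s)$ of $NN^\top$ with those $(r-\mu_j)(\mu_j-s)$ of $N^\top N$, and noting that the map $x\mapsto (r-x)(x-s)$ is symmetric about $\tfrac{r+s}{2}$ so that $(r-\eta)(\eta-s)=(r-\mu)(\mu-s)$ forces $\mu\in\{\eta,\,r+s-\eta\}$, yields the stated values $\mu_i=r+s-\eta_{f-i+1}$ for the ``generic'' indices, while the extreme blocks of indices correspond to the extra $r$'s and $s$'s in the spectrum of $M_2$ coming from the dimension mismatch between $NN^\top$ and $N^\top N$.

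The step I expect to be the main obstacle is the bookkeeping of the index ranges: one must determine precisely which ordered eigenvalues $\mu_i$ of $M_2$ pick up the value $r$ (there are $f-n_1$ of them), which pick up $s$, and which are reflected to $r+s-\eta_{f-i+1}$, and verify that this is consistent with the ordering $\mu_1\geqslant\dots\geqslant\mu_{n-n_1}$. This amounts to carefully tracking how the multiplicities $f$ and $n-f$ of $M$ distribute across the two diagonal blocks via the singular values of $N$, and confirming that the reflection $x\mapsto r+s-x$ reverses the order so that the decreasing arrangement of the $\mu_i$ matches the reversed, reflected arrangement of the $\eta_i$. Since the result is quoted from the literature, I would keep this combinatorial accounting brief and refer to the cited sources for the detailed verification.
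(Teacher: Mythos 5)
The paper does not actually prove this lemma --- it is quoted from the cited references --- so there is no in-paper argument to compare yours against; I can only assess the plan on its own terms. You assemble the right objects, but the decisive step does not go through as written. From the diagonal block identities $NN^{\top}=(rI-M_1)(M_1-sI)$ and $N^{\top}N=(rI-M_2)(M_2-sI)$, together with the fact that $NN^{\top}$ and $N^{\top}N$ share their nonzero spectrum, you learn only that each interior eigenvalue $\mu$ of $M_2$ satisfies $(r-\mu)(\mu-s)=(r-\eta)(\eta-s)$ for some interior eigenvalue $\eta$ of $M_1$, i.e.\ $\mu\in\{\eta,\,r+s-\eta\}$ --- you say exactly this --- but you then assert that the matching ``yields'' $\mu=r+s-\eta$. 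That choice is not forced by the singular-value comparison: the quadratic $x\mapsto(r-x)(x-s)$ is symmetric about $(r+s)/2$, so this comparison is blind to the reflection, and in general the trace constraint alone does not break the tie either when several interior eigenvalues are present.

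The missing ingredient is the off-diagonal block equation you already wrote down, $N^{\top}(M_1-rI)+(M_2-sI)N^{\top}=0$, read as an intertwining relation $M_2N^{\top}=N^{\top}\bigl((r+s)I-M_1\bigr)$. If $M_1v=\eta v$, then $M_2(N^{\top}v)=(r+s-\eta)N^{\top}v$, and $\|N^{\top}v\|^{2}=v^{\top}NN^{\top}v=(r-\eta)(\eta-s)\|v\|^{2}$, so $N^{\top}$ is injective on the $\eta$-eigenspace of $M_1$ precisely when $s<\eta<r$. Hence each interior $\eta$ of multiplicity $m$ in $M_1$ forces $r+s-\eta$ to occur with multiplicity at least $m$ in $M_2$; the symmetric argument with $N$ in place of $N^{\top}$ gives equality, and the residual multiplicities of $r$ and $s$ in $M_2$ (the $f-n_1$ and $n-n_1-f$ counts, hence the index ranges you flagged as the obstacle) then follow by comparing with the global multiplicities $f$ and $n-f$, e.g.\ via $\operatorname{tr}(M-sI)=f(r-s)$. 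Alternatively, and more in the spirit of the cited sources, write $M=sI+(r-s)P$ with $P=QQ^{\top}$ an orthogonal projection of rank $f$ and $Q=\left(\begin{smallmatrix}Q_1\\ Q_2\end{smallmatrix}\right)$; the single identity $Q_1^{\top}Q_1+Q_2^{\top}Q_2=I_f$ delivers the matching and the reflection $\sigma\mapsto 1-\sigma$ simultaneously, with no case analysis.
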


Now we give a structural result for graphs with a certain spectrum.

\begin{lemma}
Let $\Gamma$ be a connected graph with spectrum 
$\left \{ [\theta_0]^1, [\theta]^{n}, [-\theta-1]^{n-1} \right \}$, so that 
its adjacency matrix $A$ under the valency partition has block form:
\[
A=
\begin{pmatrix}
A_1 & B^{\top}\\
B & A_2
\end{pmatrix}
\text{~~with quotient matrix~~}
\begin{pmatrix}
k_{11} & k_{12}\\
k_{21} & k_{22}
\end{pmatrix},
\]
where $A_1$ and $A_2$ are both $n\times n$ matrices, i.e., $n=n_1=n_2$, where $n_i$ is the number 
of vertices with valency $k_i$. 


Then the matrices $A_1$ and $J-I-A_2$ are cospectral.
 
\end{lemma}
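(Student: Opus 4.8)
The starting point is the rank-one identity $(A-\theta_1 I)(A-\theta_2 I)=\alpha\alpha^\top$. Writing $\theta_1=\theta$ and $\theta_2=-\theta-1$, so that $\theta_1+\theta_2=-1$ and $\theta_1\theta_2=-\theta(\theta+1)$, this becomes $A^2+A+\theta_1\theta_2 I=\alpha\alpha^\top$. Since $\theta_1=\theta>0$ (as $\theta_2=-\theta-1\leqslant-\sqrt2$) the graph is non-bipartite by Corollary~\ref{cor:compbi}, so Theorem~\ref{prop2val} applies: the valency partition is equitable with quotient matrix $Q=\left(\begin{smallmatrix}k_{11}&k_{12}\\k_{21}&k_{22}\end{smallmatrix}\right)$, the subgraphs $\Gamma[V_1],\Gamma[V_2]$ are $k_{11}$- and $k_{22}$-regular, and $\alpha$ takes the constant value $\alpha_i$ on $V_i$. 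Because $n_1=n_2=n$, the identity $k_{12}n_1=k_{21}n_2$ forces $k_{12}=k_{21}$, so $Q$ is symmetric. Put $C:=J-I-A_2$, the complement of $\Gamma[V_2]$. Comparing the three blocks of $A^2+A+\theta_1\theta_2 I=\alpha\alpha^\top$ under the valency partition gives
\[
A_1^2+A_1+B^\top B=\theta(\theta+1)I+\alpha_1^2 J,\qquad A_2^2+A_2+BB^\top=\theta(\theta+1)I+\alpha_2^2 J,
\]
together with the off-diagonal relation $A_1B^\top+B^\top A_2+B^\top=\alpha_1\alpha_2 J$.

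The decisive preliminary step is to identify the second eigenvalue $\theta'$ of $Q$. From $\operatorname{tr}A=0$ and the spectrum one gets $\theta_0+\theta_1=n-1$, while the explicit entries of Theorem~\ref{prop2val}(i) give $\theta'=\operatorname{tr}Q-\theta_0=-\alpha_1\alpha_2-\theta_1\theta_2$. Substituting $k_{12}=k_{21}$ into the formulas for $k_{12},k_{21}$ yields $\alpha_1^2-\alpha_1\alpha_2+\alpha_2^2=\theta_0+\theta_1\theta_2$, hence $\alpha_1^2+\alpha_2^2=\theta_0-\theta'$. Feeding this into the identity $\operatorname{tr}A^2=n(k_1+k_2)=\theta_0^2+n\theta_1^2+(n-1)\theta_2^2$ (using $k_i=\alpha_i^2-\theta_1\theta_2$) and simplifying forces $\theta'=\theta_1$. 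Therefore $k_{11}+k_{22}=\theta_0+\theta_1=n-1$, so that $C$ is $(n-1-k_{22})=k_{11}$-regular and thus has the same valency as $A_1$; moreover evaluating the off-diagonal relation on $\mathbf j$ and using $k_{11}+k_{22}+1=n$ gives $\alpha_1\alpha_2=k_{12}$.

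Now I would linearise. Using $A_2+I=J-C$, $B^\top J=k_{12}J$ and $\alpha_1\alpha_2=k_{12}$, the off-diagonal relation collapses to the exact intertwining $A_1B^\top=B^\top C$, equivalently $BA_1=CB$. Rewriting the two diagonal relations through $A_2=J-I-C$ and $JC=CJ=k_{11}J$ gives $B^\top B=\theta(\theta+1)I+\alpha_1^2 J-(A_1^2+A_1)$ and $BB^\top=\theta(\theta+1)I+cJ-(C^2+C)$ with $c=\alpha_2^2-n+1+2k_{11}$; since $\alpha_1^2-\alpha_2^2=k_1-k_2=k_{11}-k_{22}$ and $k_{11}+k_{22}=n-1$ one checks $c=\alpha_1^2$, so the two $J$-coefficients agree.

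To conclude, set $p_m=\operatorname{tr}(A_1^m)$, $q_m=\operatorname{tr}(C^m)$ and $D_m=p_m-q_m$. From $BA_1^m=C^mB$ I obtain $\operatorname{tr}(A_1^m B^\top B)=\operatorname{tr}(C^m BB^\top)$; inserting the expressions for $B^\top B,BB^\top$ and using $A_1\mathbf j=C\mathbf j=k_{11}\mathbf j$ (so $\operatorname{tr}(A_1^m J)=\operatorname{tr}(C^m J)=nk_{11}^m$), the equal $J$-coefficients make every $J$-term cancel and leave the homogeneous recurrence $D_{m+2}+D_{m+1}-\theta(\theta+1)D_m=0$. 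As $D_0=n-n=0$ and $D_1=\operatorname{tr}A_1-\operatorname{tr}C=0$, induction gives $D_m=0$ for all $m$, so $A_1$ and $C=J-I-A_2$ have equal power sums and hence the same characteristic polynomial. I expect the main obstacle to be the scalar fact $k_{11}+k_{22}=n-1$ (that the non-Perron eigenvalue of $Q$ is $\theta_1$, not $\theta_2$): it is precisely what makes $C$ and $A_1$ share a valency and the $J$-coefficients match. A naive shortcut from the cospectrality of $B^\top B$ and $BB^\top$ only shows $f(A_1)$ and $f(C)$ are cospectral for $f(x)=x^2+x$, which is insufficient because $f$ is two-to-one on $[-\theta-1,\theta]$ (symmetric about $x=-\tfrac12$); the intertwining $BA_1=CB$, packaged as the trace recurrence, is what removes this ambiguity. (Alternatively, one could try to feed a rank-one modification $2A+I+\lambda\alpha\alpha^\top$, having only the two eigenvalues $\pm(2\theta+1)$, into Lemma~\ref{two-ev-lemma}.)
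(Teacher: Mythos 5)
Your proof is correct, but it takes a genuinely different route from the paper's. The paper kills the Perron eigenvalue with a rank-one perturbation $M'=A+c\alpha\alpha^{\top}$, $c=-(\theta+1+\theta_0)/|\alpha|^2$, so that $M'$ has exactly the two eigenvalues $\theta$ and $-\theta-1$ (each of multiplicity $n$), and then applies Lemma~\ref{two-ev-lemma} to the block partition of $M'$: since $A_1,A_2$ are regular, the diagonal blocks $M_i=A_i+c\alpha_i^2J$ differ from $A_i$ only in the $\mathbf j$-eigenvalue, the lemma pairs their spectra via $\mu_i=-1-\eta_{n-i+1}$, and the remaining $\mathbf j$-eigenvalues match by a trace count. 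You instead work directly with the blocks of $(A-\theta I)(A+(\theta+1)I)=\alpha\alpha^{\top}$: after pinning down the non-Perron eigenvalue of the quotient matrix (your $\operatorname{tr}A^2$ computation forcing $\theta'=\theta$, hence $k_{11}+k_{22}=n-1$ and $k_{12}=\alpha_1\alpha_2$ --- all of which I checked), you extract the intertwining $BA_1=(J-I-A_2)B$ and the matching Gram identities for $B^{\top}B$ and $BB^{\top}$, and close with the power-sum recurrence $D_{m+2}+D_{m+1}=\theta(\theta+1)D_m$, $D_0=D_1=0$. Your route is longer and more computational, but it is self-contained (no appeal to the external two-eigenvalue lemma) and it derives as byproducts exactly the structural identities that the paper only establishes later, in the converse direction, in Lemma~\ref{lem:n1n2}; your closing remark about why cospectrality of $B^{\top}B$ and $BB^{\top}$ alone is insufficient (the map $x\mapsto x^2+x$ being two-to-one on $[-\theta-1,\theta]$) correctly identifies why the intertwining is the essential extra ingredient. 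Your parenthetical alternative at the end is, in substance, the paper's actual proof.
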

\begin{proof}
	
	We first recall that if $\regSub$ is a regular graph with eigenvalues 
	$\eta_0=k$ (its valency), $\eta_1, \eta_2, \dots$, then the complement of 
	$\regSub$ has eigenvalues $|\regSub|-1-k$ and  $-1-\eta_i$ for $i\geqslant 1$.
 
	For given number $c$, the matrix $M':=A+c\alpha\alpha^{\top}$ has eigenvalues 
	$\theta_0+c|\alpha|^2$ with multiplicity 1, and $\theta$ and $-\theta-1$ with multiplicities $n$ and $n-1$ respectively.
	Choose $c$ equal to $-(\theta+1+\theta_0)/ |\alpha|^2$. 
	Then the spectrum of $M'$ is $\left \{ [\theta]^n, [-\theta-1]^{n} \right \}$.
	Moreover, we can write $M'$ as follows:
	\[
	M' =
	\begin{pmatrix}
	A_1+c\alpha_1^2 J & B^{\top}+c\alpha_1\alpha_2 J\\
	B+c\alpha_1\alpha_2 J & A_2+c\alpha_2^2 J
	\end{pmatrix}
	.
	\]

	In the notation of Lemma~\ref{two-ev-lemma} applied to $M'$, we have 
	$v=2n$, $v_1=n$, $\{r,s\}=\{\theta,-\theta-1\}$, and $f=v-f=n$.
	Let $\eta_1\geqslant \dots \geqslant\eta_{n}$ be 
	the eigenvalues of $M_1:=A_1+c\alpha_1^2 J$ and let $\mu_1\geqslant \dots \geqslant\mu_{n}$ 
	be the eigenvalues of $M_2:=A_2+c\alpha_2^2 J$. 

	Since $A_1$ and $A_2$ are the adjacency matrices of regular graphs, 
	the matrices $M_1$ and $M_2$ have the same eigenvalues as the matrices 
	$A_1$ and $A_2$, respectively, except for the eigenvalues with eigenvector $\mathbf{j}$.

	By the conclusion of Lemma~\ref{two-ev-lemma}, 
	we see that 
	\[
	\mu_i=r+s-\eta_{f-i+1}=-1-\eta_{f-i+1},\text{~~for~}i=1,\dots,n.
	\]

	It now follows that $A_1$ and $J-I-A_2$ have the same eigenvalues, 
	except for the eigenvalues with eigenvector $\mathbf{j}$. 
	But these eigenvalues are determined from $\operatorname{tr} A_1=\operatorname{tr} A_2=0$, 
	and therefore they also coincide. The lemma is proved.	
\end{proof}

We can also prove the converse.

\begin{lemma} 
	\label{lem:n1n2}
Let $\Gamma$ be a graph in $\mathcal G(\theta_0,\theta_1,\theta_2)$, 
so that its adjacency matrix $A$ under the valency partition has block form:
\[
A =
\begin{pmatrix}
A_1 & B^{\top}\\
B & A_2
\end{pmatrix}
\text{~~with quotient matrix~~}
\begin{pmatrix}
k_{11} & k_{12}\\
k_{21} & k_{22}
\end{pmatrix},
\]
where $A_1$ and $J-I-A_2$ are cospectral.
Assume $n=n_1=n_2$, where $n_i$ is the number of vertices with valency $k_i$.

Then $k_{12}=k_{21}$ and $\Gamma$ has spectrum 
$\left \{ [\theta_0]^1, [\theta]^{n}, [-\theta-1]^{n-1} \right \}$, 
where $\theta\in \{\theta_1,\theta_2\}$ is an eigenvalue of the quotient matrix 
of the valency partition of $\Gamma$.
Moreover, $BA_1=(J-I-A_2)B$, and
\begin{equation}\label{eq-like-starcomp}
B^{\top}B=(\theta I-A_1)(\theta I-(J-I-A_1)),\text{~and~}
BB^{\top}=(\theta I-A_2)(\theta I-(J-I-A_2)).
\end{equation}
\end{lemma}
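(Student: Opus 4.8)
The plan is to read off as much as possible from the block structure of the defining identity $(A-\theta_1 I)(A-\theta_2 I)=\alpha\alpha^\top$ and to use the cospectrality hypothesis to force $\theta_1+\theta_2=-1$, after which every remaining assertion drops out by direct computation. First I would record the easy structural facts: by Theorem~\ref{prop2val} the valency partition is equitable, so $A_1$ and $A_2$ are adjacency matrices of $k_{11}$- and $k_{22}$-regular graphs and $\alpha$ is constant ($=\alpha_i$) on $V_i$; counting edges between the parts gives $n k_{12}=n k_{21}$, whence $k_{12}=k_{21}$ (this is the first assertion, and is the only place where $n_1=n_2$ is used trivially). Writing $p(x):=(x-\theta_1)(x-\theta_2)$, the $(1,1)$-block of the defining identity reads $B^\top B=\alpha_1^2 J-(A_1-\theta_1 I)(A_1-\theta_2 I)$; since $A_1$ commutes with $J$, on $\mathbf{j}^\perp$ the matrix $B^\top B$ has eigenvalues $-p(\eta_i)$, where $\eta_1,\dots,\eta_{n-1}$ are the non-principal eigenvalues of $A_1$, while on $\mathbf{j}$ it has eigenvalue $k_{12}k_{21}$. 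The analogous statement holds for $BB^\top$ with the non-principal eigenvalues $\mu_1,\dots,\mu_{n-1}$ of $A_2$. Because $n_1=n_2$ the matrix $B$ is square, so $B^\top B$ and $BB^\top$ are cospectral; cancelling the common $\mathbf{j}$-eigenvalue gives the multiset identity $\{p(\eta_i)\}=\{p(\mu_j)\}$.

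The crux is the step $\theta_1+\theta_2=-1$. Matching largest eigenvalues in the hypothesis that $A_1$ and $J-I-A_2$ are cospectral yields $k_{11}=n-1-k_{22}$, and cancelling these gives $\{\eta_i\}=\{-1-\mu_j\}$ as multisets. Substituting this into $\{p(\eta_i)\}=\{p(\mu_j)\}$ and comparing first moments, the elementary identity $p(-1-x)-p(x)=(1+\theta_1+\theta_2)(2x+1)$ together with $\sum_j\mu_j=-k_{22}$ collapses everything to $(1+\theta_1+\theta_2)(n-1-2k_{22})=0$. If the second factor vanished, then $k_{22}=(n-1)/2=k_{11}$, and with $k_{12}=k_{21}$ this would force $k_1=k_{11}+k_{12}=k_{22}+k_{21}=k_2$, contradicting the nonregularity of $\Gamma$. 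Hence $\theta_1+\theta_2=-1$, so the two smaller eigenvalues are $\theta$ and $-1-\theta$, where $\theta\in\{\theta_1,\theta_2\}$ is the quotient-matrix eigenvalue of Theorem~\ref{prop2val}(iii).

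It then remains to assemble the conclusions. Since the partition is equitable, the spectrum of $\Gamma$ is the union of the eigenvalues $\{\theta_0,\theta\}$ of $Q$ and the eigenvalues of $A$ on the invariant space $U$ of vectors summing to zero on each part; there $\alpha\alpha^\top$ vanishes, so these eigenvalues lie in $\{\theta_1,\theta_2\}$. Comparing $\operatorname{tr}(A|_U)=-\operatorname{tr}Q=-(n-1)$ with $\dim U=2n-2$ and using $\theta_1+\theta_2=-1$ (and $2\theta_1+1\neq 0$, as $\theta_1\geqslant 0$) forces each of $\theta_1,\theta_2$ to occur with multiplicity $n-1$ on $U$; adding back the contribution of $Q$ yields the spectrum $\{[\theta_0]^1,[\theta]^n,[-\theta-1]^{n-1}\}$. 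For the intertwining relation, the $(2,1)$-block of $A^2=(\theta_1+\theta_2)A-\theta_1\theta_2 I+\alpha\alpha^\top$ gives $BA_1+A_2B=-B+\alpha_1\alpha_2 J$, so $BA_1=(J-I-A_2)B$ is equivalent to $k_{12}=\alpha_1\alpha_2$; this last equality I would verify by combining $\alpha_1\alpha_2=\theta^2$ (Theorem~\ref{prop2val}(iii) with $\theta'=-1-\theta$) with the relation $\alpha_1^2+\alpha_2^2=\theta_0+\theta_1\theta_2+\alpha_1\alpha_2$ extracted from $n_1=n_2$ through the formulae of Theorem~\ref{prop2val}(iv).

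Finally, feeding $\theta_1+\theta_2=-1$, $\theta_1\theta_2=-\theta(\theta+1)$ and $\alpha_i^2=k_{ii}-\theta$ (each an immediate consequence of $k_i=\alpha_i^2-\theta_1\theta_2$ and $k_{12}=k_{21}=\theta^2$) back into the $(1,1)$- and $(2,2)$-blocks of the defining identity, and simplifying with $A_iJ=k_{ii}J$, produces the two displayed equations for $B^\top B$ and $BB^\top$. The main obstacle is the second paragraph: everything hinges on squeezing $\theta_1+\theta_2=-1$ out of the two cospectrality inputs at the level of first moments, and on noticing that the exceptional solution $k_{11}=k_{22}$ is excluded precisely by nonregularity.
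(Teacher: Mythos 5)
Your proof is correct, and it follows the same overall skeleton as the paper's: block-decompose $(A-\theta_1 I)(A-\theta_2 I)=\alpha\alpha^\top$, extract $\theta_1+\theta_2=-1$ as the crux, determine the spectrum by a trace count (using that $2\theta+1\neq 0$), reduce the intertwining relation to $k_{12}=\alpha_1\alpha_2$, and verify \eqref{eq-like-starcomp} by substitution. The two pivotal computations, however, are carried out by genuinely different devices. For $\theta_1+\theta_2=-1$, the paper writes $A_2=J-I-\tilde A_1$, derives explicit expressions for $B^{\top}B$ and $BB^{\top}$ in terms of $A_1$ and $\tilde A_1$, and compares their products with $\mathbf{j}$ (both equal $k_{12}k_{21}\mathbf{j}$, and $A_1$, $\tilde A_1$ have the same row sum); you instead use that $B^{\top}B$ and $BB^{\top}$ are cospectral because $B$ is square, combine this with the eigenvalue correspondence $\eta_i\leftrightarrow -1-\mu_j$ coming from the cospectrality hypothesis, and collapse everything via the polynomial identity $p(-1-x)-p(x)=(1+\theta_1+\theta_2)(2x+1)$, which only needs first moments. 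Both routes end in the same dichotomy, with $k_{11}=k_{22}$ excluded by nonregularity. Likewise, for $k_{12}=\alpha_1\alpha_2$ the paper multiplies the $(2,1)$-block relation $BA_1-\tilde A_1B=(\alpha_1\alpha_2-k_{12})J$ by $\mathbf{j}$, whereas you extract $\alpha_1^2+\alpha_2^2=\theta_0+\theta_1\theta_2+\alpha_1\alpha_2$ from $n_1=n_2$ via the formulae of Theorem~\ref{prop2val}(iv) and feed it into $k_{12}=\alpha_1(k_1-\theta_0)/(\alpha_1-\alpha_2)$; this is valid (note $\alpha_1+\alpha_2>0$ when cancelling), though it leans more on the parameter formulas than the paper's self-contained row-sum computation. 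Your final substitution uses $\alpha_i^2=k_{ii}-\theta$, which presupposes $k_{12}=k_{21}=\alpha_1\alpha_2=\theta^2$; since you establish that equality beforehand there is no circularity, but the ordering of the steps is essential.
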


\begin{proof}
	First, in order to simplify the exposition below, we write $A_2=J-I-\tilde{A_1}$.
	Observe that $\tilde{A_1}$ is cospectral to $A_1$.
	Also note that $k_{12}=k_{21}$, since $n_1=n_2$ (see the proof of Theorem~\ref{prop2val}). 
	In particular, $B\mathbf{j}=B^{\top}\mathbf{j}=k_{12}\mathbf{j}$.
	Also note that $k_{11} + k_{22} = n-1 = \theta_0 + \theta$ and
	we can write
	\begin{equation}
		\label{eqn:M2}
		(A-\theta_1 I)(A-\theta_2 I)=A^2-(\theta_1+\theta_2)A+\theta_1\theta_2 I = \alpha \alpha^\top.
	\end{equation}

	For a matrix $X$ whose rows and columns are indexed by the vertex set of $\Gamma$, we will denote 
	by $X_{i,j}$ a submatrix of $X$ whose rows (columns) correspond to the vertices of $\Gamma$ of valency $k_i$ 
	($k_j$, respectively).
	We have
	\[
	A^2 =
	\begin{pmatrix}
	A_1^2+B^{\top}B & A_1B^{\top}-B^{\top}\tilde{A_1}+B^{\top}(J-I)\\
	BA_1-\tilde{A_1}B+(J-I)B & A_2^2+BB^{\top}
	\end{pmatrix}.
	\]

	On the other hand, by Eq. \eqref{eqn:M2}, we have
	\[
	(A^2)_{1,1} = A_1(\theta_1+\theta_2) + \alpha_1^2 J - \theta_1\theta_2 I,
	\]
	\[
	(A^2)_{2,2} = (J-I-\tilde{A_1})(\theta_1+\theta_2) + \alpha_2^2 J - \theta_1\theta_2 I,
	\]
	so that 
	\begin{equation}\label{eq-btb}
	B^{\top}B = -A_1^2 + A_1(\theta_1+\theta_2) + \alpha_1^2 J - \theta_1\theta_2 I,
	\end{equation}
	\begin{equation}\label{eq-bbt}
	BB^{\top} = -\tilde{A_1}(\tilde{A_1} + (2+\theta_1+\theta_2)I) + (\theta_1+\theta_2-n+2(1+k_{11})+\alpha_2^2) J - (\theta_1+1)(\theta_2+1) I,
	\end{equation}

	Note that $BB^{\top}\mathbf{j}=B^{\top}B\mathbf{j}=k_{12}k_{21}\mathbf{j}$.
	Multiplying Eqs. (\ref{eq-btb}) and (\ref{eq-bbt}) by $\mathbf{j}$, and 
	then comparing their right hand sides, we obtain that 
	\[
	(2k_{11}+1)(\theta_1+\theta_2+1) = n(\theta_1+\theta_2+1).
	\]

	Now if $(2k_{11}+1)=n$ holds then $k_{22}=n-1-k_{11}=k_{11}$ and, thus, $\Gamma$ is a regular graph, 
	a contradiction. Therefore $\theta_1+\theta_2=-1$.

	Suppose that $\Gamma$ had spectrum $\left \{ [\theta_0]^1, [\theta]^{m_1}, [-\theta-1]^{m_2} \right \}$, 
	where $1+m_1+m_2=|V(\Gamma)|=2n$. Then 
	\begin{equation*}
		\label{eq-tr-M}
	\theta_0+(m_1-m_2)\theta-m_2={\rm tr}(A)=0.
	\end{equation*}

	This, together with the equation $\theta_0+\theta=n-1$, gives 
	$n-m_1=2(m_1-n)\theta$. 
	Since $\theta$ is an integer, we see 
	that $n=m_1$ and hence $\Gamma$ has spectrum $\left \{ [\theta_0]^1, [\theta]^{n}, [-\theta-1]^{n-1} \right \}$.

	Substituting $\theta_1+\theta_2=-1$ into Eqs. (\ref{eq-btb}) and (\ref{eq-bbt}), 
	we see that
	\[
	B^{\top}B = -A_1^2 - A_1 + \alpha_1^2 J - \theta_1\theta_2 I,
	\]
	\[
	BB^{\top} = -\tilde{A_1}^2 - \tilde{A_1} + \alpha_1^2 J - \theta_1\theta_2 I.
	\]

	Further, let us show that $BA_1=\tilde{A_1}B$ holds. 
	From Eq. \eqref{eqn:M2}, we can write 
	\[
	(A^2)_{2,1}-(\theta_1+\theta_2)(A)_{2,1}=BA_1-\tilde{A_1}B+JB-(1+\theta_1+\theta_2)B = \alpha_1\alpha_2 J.
	\]

	Taking into account $\theta_1+\theta_2=-1$ and $JB=k_{12}J$, we have that 
	\begin{equation}\label{eq-ab-ba}
	BA_1-\tilde{A_1}B=(\alpha_1\alpha_2-k_{12})J.
	\end{equation}

	Multiply Eq. (\ref{eq-ab-ba}) by $\mathbf{j}$ to obtain the equality
	\begin{equation}
		\label{eqn:k12}
		\alpha_1\alpha_2-k_{12}=0,
	\end{equation}
	hence the required equality holds.
	
	Finally, we shall show Eq. (\ref{eq-like-starcomp}) (we prove only the first equation, the second 
	one follows similarly). 
%
	Start with the equation
	\[
	(\theta I-A_1)((\theta+1) I-(J-A_1)) = -A_1^2 - A_1 + (k_{11}-\theta)J + \theta(\theta+1)I.
	\]

	Since $\theta\in \{\theta_1,\theta_2\}$ and $\theta_1+\theta_2=-1$, 
	we see that $\theta(\theta+1)=-\theta_1\theta_2$. 
	Further, $k_{11}-\theta=\theta_0-k_{22}$, and by Eq. \eqref{eqn:k12}, we have
	$(\theta_0-k_{22})\alpha_2=k_{21}\alpha_1=\alpha_1^2\alpha_2$. 
	Thus
	\[
	(\theta I-A_1)((\theta+1) I-(J-A_1)) = -A_1^2 - A_1 + \alpha_1^2 J - \theta_1\theta_2 I = B^{\top}B,
	\]
	and the lemma follows.
\end{proof}

In Table~\ref{tab:fea} we observe that, apart from the Petersen cone, all feasible valency-arrays and spectra for biregular graphs in $\mathcal G(\theta_0,\theta_1,\theta_2)$ with $\theta_1+\theta_2 = -1$ have $n_1 = n_2$.
It is an interesting problem to decide whether this property follows from the spectrum in general except from the Petersen cone.


\section{Some nonexistence results} 
\label{sub:some_nonexistence_results}

In this final section we devote ourselves to showing the nonexistence of graphs corresponding to certain feasible valency-arrays and spectra.

Our first result here shows that there do not exist any graphs satisfying the conditions of Remark~\ref{rem:48}.

	\begin{theorem}\label{thm:48}
		There do not exist any graphs having
		valency-array $(24,24; 33,15)$ and spectrum $\{ [27]^1, [3]^{19}, [-3]^{28} \}$; or
		valency-array $(24,24; 32,14)$ and spectrum $\{ [26]^1, [2]^{27}, [-4]^{20} \}$.
	\end{theorem}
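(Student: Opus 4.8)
The plan is to exploit the fact that the two putative graphs are complementary, so that it suffices to rule out just one of them. A graph $\Gamma$ with spectrum $\{[27]^1,[3]^{19},[-3]^{28}\}$ and valency-array $(24,24;33,15)$ is biregular and is neither a cone nor a complete bipartite graph, so by Theorem~\ref{thm:disconn} its complement $\overline{\Gamma}$ is connected; by Proposition~\ref{pro:bicomp} (cf.\ Remark~\ref{rem:48}) $\overline\Gamma$ then has valencies $47-33=14$, $47-15=32$ and spectrum $\{[26]^1,[2]^{27},[-4]^{20}\}$. Hence the two cases stand or fall together, and I would work with the second one, namely $\Gamma\in\mathcal G(26,2,-4)$ with valency-array $(24,24;32,14)$, because its small valency class carries the most rigid local structure.

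First I would pin down the combinatorial structure via Theorem~\ref{prop2val}. From $d_x=\alpha_x^2-\theta_1\theta_2$ one obtains $\alpha_1^2=24$ and $\alpha_2^2=6$, while the quotient matrix $Q$ has eigenvalues $\theta_0=26$ and $\theta_2=-4$, so by Theorem~\ref{prop2val}(iii) $\alpha_1\alpha_2=-\theta_2(\theta_1+1)=12$. This yields the integral quotient matrix $Q=\left(\begin{smallmatrix}20&12\\12&2\end{smallmatrix}\right)$, so the subgraph induced on $V_1$ is $20$-regular and the subgraph induced on $V_2$ is $2$-regular, i.e.\ a disjoint union $H$ of cycles on $24$ vertices. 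Writing $A=\left(\begin{smallmatrix}A_1&B^\top\\ B&A_2\end{smallmatrix}\right)$ and reading off the two diagonal blocks of $(A-\theta_1 I)(A-\theta_2 I)=\alpha\alpha^\top$ gives the Gram identities
\[
BB^\top = 6J+8I-2A_2-A_2^2, \qquad B^\top B = 24J+8I-2A_1-A_1^2,
\]
where $B$ is a $24\times 24$ matrix over $\{0,1\}$ with all row and column sums equal to $k_{12}=k_{21}=12$.

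The heart of the argument is then to show that no such $0/1$ matrix $B$ exists. Since $BB^\top$ and $B^\top B$ are cospectral, comparing the two identities on the orthogonal complement of $\mathbf j$ forces the multiset $\{(2-\eta)(\eta+4)\}$, taken over the non-principal eigenvalues $\eta$ of $H$, to coincide with the multiset $\{9-\mu^2\}$, taken over the non-principal eigenvalues $\mu$ of the $3$-regular graph complementary to $A_1$. I would enumerate the possible $H$ — these correspond to the finitely many partitions of $24$ into cycle lengths at least $3$ — and, for each, use this spectral matching together with the positive-semidefiniteness and integrality of $BB^\top$ and the constraint that $\det(B)^2=\det(BB^\top)=144\prod_{\eta}(2-\eta)(\eta+4)$ be a perfect square, to eliminate almost all cases; any survivors I would dispose of by the compatibility-graph clique search used in the proofs of Theorems~\ref{thm:30} and \ref{thm:36}, building $B$ column by column and verifying that no clique of the required size occurs.

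The main obstacle is precisely this last step. Unlike in Theorem~\ref{thm:36}, neither $V_1$ nor $V_2$ has the right cardinality to serve as a star complement for either of the eigenvalues $2$ or $-4$ (these would require $21$ and $28$ vertices, whereas each class has $24$), so one cannot single out a canonical star complement and is instead forced to control every admissible cycle decomposition $H$ of $V_2$ and to show in each case that the prescribed matrix $6J+8I-2A_2-A_2^2$ is never the Gram matrix $BB^\top$ of a $0/1$ matrix with constant margins $12$. Converting the assertion ``no $0/1$ matrix with this Gram matrix and these margins'' into a finite, checkable obstruction — or into a uniform arithmetic contradiction valid across all partitions of $24$ — is the crux on which the whole proof turns.
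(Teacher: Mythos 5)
Your opening reduction is correct and is exactly what the paper does: the two parameter sets are complementary (via Proposition~\ref{pro:bicomp}), so it suffices to kill one of them. But from that point on your proposal is a programme, not a proof, and you say so yourself: the elimination of every admissible cycle decomposition $H$ of $V_2$, i.e.\ showing that $6J+8I-2A_2-A_2^2$ is never $BB^\top$ for a $0/1$ matrix with margins $12$, is left as ``the crux on which the whole proof turns.'' The spectral-matching and determinant conditions you list are necessary conditions only, and there is no argument that they (or a feasible computation) actually rule out all partitions of $24$ into cycles. As written, the theorem is not proved.

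The missing idea is a better choice of which side to attack. You picked the $\mathcal G(26,2,-4)$ graph because its small class is $2$-regular, but a disjoint union of cycles has smallest eigenvalue $\geqslant -2 > -4$, so interlacing gives you no grip on it -- which is precisely why you end up with an unbounded enumeration. The paper works with the first parameter set, $\mathcal G(27,3,-3)$ with quotient matrix $\left(\begin{smallmatrix}21&12\\12&3\end{smallmatrix}\right)$: there the subgraph $\regSub$ induced on $V_2$ is \emph{cubic}, and interlacing forces $-3$ to be an eigenvalue of $\regSub$ with multiplicity at least $4$. A cubic graph has $-3$ as an eigenvalue once per bipartite component, and the smallest bipartite cubic graph is $K_{3,3}$, so $4\cdot 6=24$ forces $\regSub=4K_{3,3}$ exactly. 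Refining the valency partition into $V_1$ plus the four $K_{3,3}$'s gives a quotient matrix whose eigenvalues interlace tightly, so the partition is equitable and every vertex of $V_1$ has exactly $3$ neighbours in each $K_{3,3}$. Finally, the $7$-vertex subgraph induced on one $K_{3,3}$ together with such a vertex is one of only two graphs (according to how the three neighbours split across the bipartition), and both have smallest eigenvalue below $-3$ -- a contradiction with interlacing. Equivalently, in your setup this structure lives in the complement of $A_1$ (the $3$-regular graph on $V_1$), not in $A_2$; had you interlaced against the eigenvalue $2$ of multiplicity $27$ on the $24$ vertices of $V_1$ you would have recovered the same rigidity. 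Without some such structural step, your proposal does not close.
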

	\begin{proof}
		Suppose there exists a graph $\Gamma$ having the first valency-array and spectrum.
		The subgraph $\regSub$ induced on the vertices having valency $15$ is regular with valency $3$.
		Moreover, by interlacing, this subgraph has an eigenvalue of $-3$ with multiplicity at least $4$.
		Therefore $\regSub$ is four copies of $K_{3,3}$.
		Partition $V(\Gamma)$ into five parts consisting of the vertex sets of each $K_{3,3}$ and $V_1$.
		The quotient matrix
		\[
			Q = \begin{pmatrix}
				3 & 0& 0&0&12   \\ 
				0 & 3& 0&0&12   \\
				0 & 0 &3&0&12 \\
				0 & 0& 0&3&12	\\
				3 & 3&3&3&21
			\end{pmatrix}
		\]
		has eigenvalues $27$, $3$ (with multiplicity $3$), and $-3$.
		Since the interlacing of the eigenvalues of $Q$ with the eigenvalues of $\Gamma$ is tight, we know that the partition must be equitable.
		Therefore, each vertex of $V_1$ is adjacent to precisely three vertices of each $K_{3,3}$.
	    Let $X$ be a subgraph of $\Gamma$ induced on a set consisting of a copy of $K_{3,3}$ together with a vertex from $V_1$.
		Then $X$ can be one of two graphs, both of which have smallest eigenvalue less than $-3$.
		This gives a contradiction.
		
		Finally, suppose there exists a graph $\Gamma$ having the second valency-array and spectrum.
		Using Proposition~\ref{pro:bicomp}, one can see that the complement of $\Gamma$ has the first valency-array and spectrum.
		This completes the proof.
	\end{proof}

The following lemma is a simple application of the Cauchy-Schwarz inequality.

\begin{lemma}\label{lem:cs}
	Let $\Gamma$ be a connected $n$-vertex $k$-regular graph having a non-trivial eigenvalue $\theta$ with multiplicity $m$.
	Then $(k+m \theta)^2 \leqslant (n-1-m)(n k - k^2 - m \theta^2)$.
\end{lemma}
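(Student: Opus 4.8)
The plan is to apply the Cauchy--Schwarz inequality to the eigenvalues of $\Gamma$ other than $k$ and $\theta$. Since $\Gamma$ is connected and $k$-regular, the Perron--Frobenius eigenvalue equals $k$ and has multiplicity $1$, while by hypothesis $\theta$ has multiplicity $m$. I would denote the remaining $n-1-m$ eigenvalues by $\mu_1, \dots, \mu_{n-1-m}$ and aim to control them via the first two power sums.

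First I would record the two trace identities in the spirit of Eq.~\eqref{sums}. From $\operatorname{tr} A = 0$ we obtain $k + m\theta + \sum_{i} \mu_i = 0$, and hence $\sum_{i} \mu_i = -(k + m\theta)$. Since $\Gamma$ is $k$-regular, each diagonal entry of $A^2$ equals $k$, so $\operatorname{tr} A^2 = nk$; thus $k^2 + m\theta^2 + \sum_{i} \mu_i^2 = nk$, which rearranges to $\sum_{i} \mu_i^2 = nk - k^2 - m\theta^2$.

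The final step is to apply Cauchy--Schwarz to the vector $(\mu_1, \dots, \mu_{n-1-m})$ against the all-ones vector of the same length, giving $\bigl(\sum_{i} \mu_i\bigr)^2 \leqslant (n-1-m)\sum_{i} \mu_i^2$. Substituting the two power-sum expressions obtained above yields precisely $(k + m\theta)^2 \leqslant (n-1-m)(nk - k^2 - m\theta^2)$, with equality exactly when all the $\mu_i$ coincide.

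There is no genuine obstacle here, as the name of the lemma suggests; the only points requiring a little care are the correct identification of the number of ``leftover'' eigenvalues as $n-1-m$ (so that the correct factor appears in the Cauchy--Schwarz bound) and the use of $k$-regularity to evaluate $\operatorname{tr} A^2 = nk$ rather than $2e$ with a general edge count. The degenerate case $n-1-m = 0$ is handled automatically, since then both sides vanish.
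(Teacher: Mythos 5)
Your proof is correct and follows essentially the same route as the paper: list the $n-1-m$ remaining eigenvalues, compute their first and second power sums from $\operatorname{tr} A=0$ and $\operatorname{tr} A^2=nk$, and apply Cauchy--Schwarz against the all-ones vector. The paper's proof is just a terser version of exactly this argument.
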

\begin{proof}
	Let $k$, $\theta$ (with multiplicity $m$), and $\eta_1, \dots, \eta_{n-1-m}$ be the eigenvalues of $\Gamma$.
	By Eqs.~\eqref{sums} we have $\sum_{i=1}^{n-1-m} \eta_i = -(k+m\theta)$ and $\sum_{i=1}^{n-1-m} \eta_i^2 = nk-k^2-m\theta^2$.
	Now apply the Cauchy-Schwartz inequality to obtain the desired result.
\end{proof}

\begin{theorem}\label{thm:100}
	There are no graphs having either 
	valency-array $(50,50; 69,33)$ and spectrum $\{ [57]^1, [7]^{24}, [-3]^{75} \}$; or
	valency-array $(50,50; 64,28)$ and spectrum $\{ [52]^1, [2]^{74}, [-8]^{25} \}$.
\end{theorem}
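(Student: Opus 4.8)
The plan is to treat the two cases in parallel, reducing each to a contradiction with Lemma~\ref{lem:cs}. Both putative graphs are non-bipartite (since $\theta_1 \neq 0$, by Corollary~\ref{cor:compbi}) biregular graphs with $n_1 = n_2 = 50$, so Theorem~\ref{prop2val} applies. First I would use Theorem~\ref{prop2val} to pin down the quotient matrix $Q$ of the valency partition. In both cases $d_x = \alpha_x^2 - \theta_1\theta_2$ gives $\alpha_1^2 = 48$, $\alpha_2^2 = 12$, so $\alpha_1\alpha_2 = 24 = -\theta_2(\theta_1+1)$; hence by Theorem~\ref{prop2val}(iii) the off-Perron eigenvalue of $Q$ is $\theta_2$ and $\operatorname{tr} Q = \theta_0 + \theta_2$. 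Combining this with $k_{12} = k_{21}$ (forced by $n_1 = n_2$) and the row sums $k_{11}+k_{12} = k_1$, $k_{21}+k_{22} = k_2$ determines
\[
Q = \begin{pmatrix} 45 & 24 \\ 24 & 9 \end{pmatrix} \quad\text{and}\quad Q = \begin{pmatrix} 40 & 24 \\ 24 & 4 \end{pmatrix}
\]
in the first and second cases respectively.

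Next I would focus on the subgraph $\Lambda$ induced on the vertex set $V_2$ of smaller valency, which by Theorem~\ref{prop2val} is regular of valency $k_{22}$ (equal to $9$ and $4$ respectively). The key structural point is that $\Lambda$ is connected. Writing $A_2$ for its adjacency matrix and taking the $(2,2)$-block of the identity $(A-\theta_1 I)(A-\theta_2 I) = \alpha\alpha^\top$ yields $BB^\top = \alpha_2^2 J - (A_2 - \theta_1 I)(A_2 - \theta_2 I)$, so any eigenvalue $\mu$ of $\Lambda$ afforded by an eigenvector orthogonal to $\mathbf{j}$ satisfies $-(\mu-\theta_1)(\mu-\theta_2) \geqslant 0$, i.e. $\theta_2 \leqslant \mu \leqslant \theta_1$, by positive-semidefiniteness of $BB^\top$. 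Since $k_{22} > \theta_1$ in both cases, the valency eigenvalue of $\Lambda$ cannot be afforded by a vector orthogonal to $\mathbf{j}$; it is therefore simple and $\Lambda$ is connected. Then interlacing gives a lower bound on the multiplicity in $\Lambda$ of the eigenvalue of $\Gamma$ of large multiplicity: in the first case $-3$ (multiplicity $75$ in $\Gamma$) occurs in $\Lambda$ with multiplicity $m \geqslant 75 - 50 = 25$, and in the second case $2$ (multiplicity $74$) occurs with multiplicity $m \geqslant 74 - 50 = 24$.

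Finally I would apply Lemma~\ref{lem:cs} to the connected regular graph $\Lambda$ on $50$ vertices. In the first case, with $k = 9$, $\theta = -3$ and $m \geqslant 25$, the inequality of Lemma~\ref{lem:cs} rearranges to $(m-3)^2 \leqslant (49-m)(41-m)$, forcing $m \leqslant 23$; in the second case, with $k = 4$, $\theta = 2$ and $m \geqslant 24$, it rearranges to $(m+2)^2 \leqslant (49-m)(46-m)$, forcing $m \leqslant 22$. Either way this contradicts the interlacing lower bound on $m$, so neither graph exists.

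I expect the only genuine subtlety — the main obstacle — to be the verification that $\Lambda$ is connected, since this is exactly what licenses the use of Lemma~\ref{lem:cs} (a statement about connected regular graphs), and it is precisely what the semidefiniteness of $BB^\top$ delivers. Once connectivity is secured, choosing the correct regular subgraph (the one of smaller valency) and the correct high-multiplicity eigenvalue, together with the ensuing arithmetic, is routine.
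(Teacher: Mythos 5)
Your proposal is correct and follows essentially the same route as the paper: pass to the induced regular subgraph on $V_2$, use interlacing to force the high-multiplicity eigenvalue ($-3$ with multiplicity at least $25$, resp.\ $2$ with multiplicity at least $24$) into that $50$-vertex subgraph, and derive a contradiction from Lemma~\ref{lem:cs}. The extra details you supply (pinning down the quotient matrix via Theorem~\ref{prop2val} and verifying connectivity of the subgraph from the positive semidefiniteness of $BB^\top$) are exactly the steps the paper leaves implicit.
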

\begin{proof}
	The technique is the same for both sets of valency-arrays and spectra; we will deal only with first.
	Consider the $50$-vertex $9$-regular subgraph $\regSub$ induced by the subset of vertices $V_2$.
	By interlacing, $\regSub$ has eigenvalues $9$ with multiplicity $1$ and $-3$ with multiplicity $25$.
	Now apply Lemma~\ref{lem:cs} to deduce its nonexistence.
\end{proof}

Note that in our next result we show the nonexistence of a graph whose corresponding spectrum has the form considered in Section~\ref{sub:_theta_1_theta_2_1}.

\begin{theorem}\label{thm:44}
	There do not exist any graphs having valency-array
	$(22,22; 22, 7)$ and spectrum $\{ [19]^1, [2]^{22}, [-3]^{21} \}$.
\end{theorem}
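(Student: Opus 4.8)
The plan is to force the local structure from the parameters and then show that it cannot be completed. First I would record the basic data: here $\theta_1\theta_2=-6$ and $\theta_1+\theta_2=-1$, so $d_x=\alpha_x^2-\theta_1\theta_2$ gives Perron entries $\alpha_1=4$ on the valency-$22$ vertices and $\alpha_2=1$ on the valency-$7$ vertices. By Theorem~\ref{prop2val}(i) the valency partition $\{V_1,V_2\}$ is equitable with quotient matrix $\left(\begin{smallmatrix}18&4\\4&3\end{smallmatrix}\right)$, whose eigenvalues are $\theta_0=19$ and $\theta_1=2$. Writing $A=\left(\begin{smallmatrix}A_1&B^{\top}\\ B&A_2\end{smallmatrix}\right)$, the induced subgraph $A_1$ on $V_1$ is $18$-regular and $A_2$ on $V_2$ is $3$-regular; since $n_1=n_2=22$ and $\theta_1+\theta_2=-1$ we are exactly in the situation of Section~\ref{sub:_theta_1_theta_2_1}, so Lemma~\ref{lem:n1n2} applies, giving that $A_1$ is cospectral with $J-I-A_2=\overline{A_2}$ and that the blocks are linked by $B^{\top}B=(2I-A_1)(3I-J+A_1)$.

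Next I would read off the local combinatorics from $\nu_{x,y}=(\theta_1+\theta_2)A_{x,y}+\alpha_x\alpha_y=-A_{x,y}+\alpha_x\alpha_y$. For $x,y\in V_2$ this is $\nu_{x,y}=1-A_{x,y}$, so two adjacent vertices of $V_2$ have no common neighbour while two non-adjacent ones have exactly one; hence $A_2$ is triangle-free and quadrilateral-free, i.e.\ a cubic graph of girth at least $5$ on $22$ vertices (and so is $\overline{A_1}$, being cospectral with $A_2$). The crucial step is the claim that for each $y\in V_2$ its four neighbours $N_1(y)\subseteq V_1$ induce a $K_4$ in $A_1$: a neighbour $x\in N_1(y)$ has $\nu_{x,y}=3$, yet none of these common neighbours can lie in $V_2$, since a $V_2$-neighbour of $y$ shares no $V_1$-neighbour with $y$ and so is not adjacent to $x$; therefore all three common neighbours of $x$ and $y$ lie in $N_1(y)\setminus\{x\}$, forcing $x$ to be adjacent to the other three vertices of $N_1(y)$. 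Thus $\{y\}\cup N_1(y)$ induces a $K_5$ in $\Gamma$, and the $22$ sets $N_1(y)$ are $22$ edge-disjoint $K_4$'s in $A_1$, each vertex of $V_1$ lying in exactly four of them and any two meeting in at most one vertex; equivalently they give a $K_4$-decomposition of the distance-$\geq 3$ graph of the cubic girth-$5$ graph $\overline{A_1}$.

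Finally I would derive a contradiction from this rigid configuration. The cleanest route is to treat $A_2$ as a star complement for the eigenvalue $\theta_1=2$ (note that $B^{\top}B=(2I-A_1)(3I-J+A_1)$ is precisely of the form appearing in Theorem~\ref{thm:reconst} and Eq.~\eqref{eq-like-starcomp}) and to run the reconstruction/compatibility-graph search used in Theorems~\ref{thm:30} and~\ref{thm:36}: one enumerates the finitely many candidate cubic girth-$5$ graphs on $22$ vertices and checks that none admits a $0/1$ matrix $B$ with constant row and column sums $4$ satisfying the required identity, whence no $\Gamma$ exists. I expect this final impossibility to be the main obstacle, because all the global spectral invariants (edge, triangle and $4$-cycle counts via $\operatorname{tr}A^k$) are automatically consistent, so the contradiction is genuinely structural and cannot come from a crude count. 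A purely combinatorial alternative worth attempting first is to locate, among the $K_4$-blocks and their mutual intersections, an induced subgraph of $\Gamma$ whose smallest eigenvalue drops below $-3$, contradicting interlacing with the spectrum $\{[19]^1,[2]^{22},[-3]^{21}\}$, in the spirit of the proof of Theorem~\ref{thm:48}.
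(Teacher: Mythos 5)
Your structural analysis is correct and follows essentially the same route as the paper: the quotient matrix $\left(\begin{smallmatrix}18&4\\4&3\end{smallmatrix}\right)$, the computation $\nu_{x,y}=1-A_{x,y}$ for $x,y\in V_2$ forcing the cubic induced subgraph $\Lambda$ on $V_2$ to have girth at least $5$, and the interlacing constraint on its second eigenvalue are exactly the paper's starting point. Your additional observation that the four $V_1$-neighbours of each $y\in V_2$ induce a $K_4$ (because the three common neighbours of $y$ and $x\in N_1(y)$ cannot lie in $V_2$, as $\nu_{y,z}=0$ for $z\in N_2(y)$) is correct and goes beyond what the paper records, as does the edge-disjointness of these $22$ blocks.

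The genuine gap is that the contradiction is never actually derived: your final paragraph describes a computation you would run and states that you \emph{expect} it to fail, which is a plan rather than a proof. The paper closes this gap concretely: a \texttt{Magma}/\texttt{nauty} enumeration shows that only four cubic graphs on $22$ vertices satisfy the girth and eigenvalue constraints on $\Lambda$, and then adjoining five further vertices to each of these in all possible ways always produces an induced subgraph violating the interlacing bounds $\theta_{\min}\geqslant -3$, $\theta_1\leqslant 2$ --- precisely the ``purely combinatorial alternative'' you mention last but do not execute. A secondary issue: treating $A_2$ as a star complement for the eigenvalue $2$ presupposes that $2$ is not an eigenvalue of the induced subgraph on $V_2$, which you have not justified (a cubic girth-$5$ graph on $22$ vertices may well have eigenvalue $2$); you can instead work directly with $BB^{\top}=(2I-A_2)(3I-J+A_2)$ from Lemma~\ref{lem:n1n2}, which does not require invertibility of $2I-A_2$, but the compatibility-graph machinery of Section~\ref{sub:star_complements} as literally described does.
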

\begin{proof}
	Let $\Gamma$ be a graph having the assumed valency-array and spectrum so that its valency partition has quotient matrix
	$\begin{pmatrix}
	18 & 4\\
	4 & 3
	\end{pmatrix}$.
	The subgraph $\regSub$ induced on the set $V_2$ is cubic.
	For vertices $x$ and $y$ in $V_2$ we have $\nu_{x,y} = 0$ if $x \sim y$ or $\nu_{x,y} = 1$ if $x \not \sim y$.
	Therefore, $\regSub$ cannot contain any triangles or any $4$-cycles.
	Moreover, $\regSub$ is a $22$ vertex cubic graph which, by interlacing, has second largest eigenvalue at most $2$.
	Using \texttt{Magma}~\cite{Magma} and \texttt{nauty}~\cite{nau}, we find that $\regSub$ must be one of four possible graphs.
	After adjoining five vertices in all possible ways to each of the four possible graphs we find that none of the resulting graphs has both smallest eigenvalue at least $-3$ and second largest eigenvalue at most $2$.
\end{proof}

In~\cite{Dam:2015qv} the authors examined a valency-array for a putative graph in $\mathcal G(30,3,-3)$ having four valencies.
We show that no such graph exists.

\begin{theorem}\label{thm:51}
	There are no graphs with
	valency-array $(1,30,5,15; 45,34,18,13)$ and spectrum $\{ [30]^1, [3]^{20}, [-3]^{30} \}$.
\end{theorem}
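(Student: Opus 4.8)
The plan is to first show that the spectrum rigidly forces an equitable valency partition together with regular induced subgraphs, and then to rule out the (few) surviving configurations by a star-complement search, exactly as in Theorems~\ref{thm:30} and \ref{thm:36}. Since $\theta_1+\theta_2=3+(-3)=0$, the common-neighbour formula collapses to $\nu_{x,y}=\alpha_x\alpha_y$ for all $x\ne y$, equivalently $A^2=\alpha\alpha^\top+9I$. From $d_x=\alpha_x^2-\theta_1\theta_2=\alpha_x^2+9$ the four valencies $45,34,18,13$ correspond to $\alpha_x\in\{6,5,3,2\}$; I write $V_1,V_2,V_3,V_4$ for the valency classes, of sizes $1,30,5,15$, with $V_1=\{v\}$.

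First I would locate $v$. If $v$ were adjacent to a vertex $w$ of valency $18$ then $\nu_{v,w}=6\cdot3=18>d_w-1$, which is impossible; hence $V_3$ is contained in the set of $51-1-45=5$ non-neighbours of $v$, and since $|V_3|=5$ these coincide, so $V_3=V(\Gamma)\setminus(\{v\}\cup N(v))$ and $N(v)=V_2\cup V_4$. For $u\in V_4$ the identity $\nu_{v,u}=12=d_u-1$ forces every neighbour of $u$ other than $v$ to lie in $N(v)$, so $u$ has no neighbour in $V_3$; thus each $w\in V_3$ has all $18$ of its neighbours in $V_2$, and $V_3$ is a coclique. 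Counting for $x\in V_2$: it shares $\nu_{v,x}=30$ neighbours with $v$, all in $N(v)$, so exactly $33-30=3$ of its neighbours (other than $v$) lie in $V_3$. This yields $A\mathbf{1}_{V_3}=3\,\mathbf{1}_{V_2}$.

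Next I would bootstrap this to the full partition. Applying $A$ and using $A^2=\alpha\alpha^\top+9I$ gives
\[
3A\mathbf{1}_{V_2}=A^2\mathbf{1}_{V_3}=\Big(\sum_{w\in V_3}\alpha_w\Big)\alpha+9\,\mathbf{1}_{V_3}=15\alpha+9\,\mathbf{1}_{V_3},
\]
so $A\mathbf{1}_{V_2}=5\alpha+3\,\mathbf{1}_{V_3}$. Reading off coordinates shows the valency partition is equitable with quotient matrix
\[
Q=\begin{pmatrix} 0 & 30 & 0 & 15 \\ 1 & 25 & 3 & 5 \\ 0 & 18 & 0 & 0 \\ 1 & 10 & 0 & 2 \end{pmatrix},
\]
that $\Gamma[V_2]$ is $25$-regular, $\Gamma[V_3]$ is empty, and $\Gamma[V_4]$ is $2$-regular, i.e.\ a disjoint union of cycles on $15$ vertices; moreover the bipartite graph between $V_2$ and $V_3$ is the $2$-design in which each $V_2$-vertex misses exactly two of the five vertices of $V_3$. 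One checks that $\mathrm{spec}(Q)=\{30,3,-3,-3\}$, consistent with the given multiplicities.

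The hard part is the final nonexistence step, since all of the counting identities above turn out to be mutually consistent. By interlacing every induced subgraph has its non-principal eigenvalues in $[-3,3]$; in particular $\Gamma[V_2]$ is a $25$-regular graph on $30$ vertices with smallest eigenvalue at least $-3$, and if $C=A(\Gamma[V_4])$ and $B=A_{V_4,V_2}$ then $BB^\top=3J+9I-C^2$. To close the argument I would take a star set for the eigenvalue $-3$ (of multiplicity $30=|V_2|$), so that the star complement is a $21$-vertex graph compatible with the forced structure — essentially a cone over a union of cycles together with the coclique $V_3$ — enumerate the admissible star complements, build the compatibility graph for each, and verify with \texttt{Magma}~\cite{Magma} and \texttt{nauty}~\cite{nau} that none admits a clique realising the star set. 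The main obstacle is pruning this search: unlike the cubic star complement of Theorem~\ref{thm:44}, a $2$-regular graph on $15$ vertices has many isomorphism types, so the feasibility of the computation depends on first cutting the candidate list down using the regularity of $\Gamma[V_2]$, the design between $V_2$ and $V_3$, and the constraint $BB^\top=3J+9I-C^2$ derived above.
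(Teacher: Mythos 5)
Your structural analysis is correct and in fact more self-contained than the paper's: the paper simply cites Van Dam et al.\ for the equitable quotient matrix, whereas you rederive it from $A^2=\alpha\alpha^\top+9I$, and your quotient matrix agrees with the one in the paper; the identity $BB^\top=3J+9I-C^2$ also checks out. The problem is the closing step. You propose to take $V_2$ as a star set for $-3$, so that the star complement is the induced subgraph on $\{v\}\cup V_3\cup V_4$. But by Theorem~\ref{thm:reconst} a star complement for $\theta$ must not have $\theta$ as an eigenvalue, and here it does: your own quotient matrix forces $v$ to be adjacent to all of $V_4$ and $\Gamma[V_4]$ to be $2$-regular, so the induced subgraph on $\{v\}\cup V_4$ is a cone over a $2$-regular graph on $15$ vertices, whose equitable quotient matrix $\left(\begin{smallmatrix}0&15\\1&2\end{smallmatrix}\right)$ has eigenvalues $5$ and $-3$. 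Hence $-3$ is an eigenvalue of the $21$-vertex subgraph you want to use as a star complement, so $V_2$ is not a star set for $-3$ and the clique search you describe can never be set up. This is precisely the subtlety that makes Theorem~\ref{thm:36} work as stated: there the argument yields a complete classification only because the disjoint union of cycles does \emph{not} have $-3$ as an eigenvalue.

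Even apart from this, the nonexistence itself is never established: you describe a computation you would run and explicitly note that its feasibility is unclear. The paper's proof is admittedly also only a sketch, but it points to a different and workable closing move, namely the augmentation search of Theorem~\ref{thm:44} starting from the valency-$2$ subgraph on $V_4$: enumerate the admissible disjoint unions of cycles on $15$ vertices (constrained by $\nu_{u,u'}=\alpha_u\alpha_{u'}=4$ for all $u,u'\in V_4$ and by interlacing), then adjoin vertices consistently with the quotient matrix and derive a contradiction. That route does not require any induced subgraph to be a star complement. To repair your proof you would either have to switch to that kind of extension argument, or find a genuine star set (for instance for the eigenvalue $3$ of multiplicity $20$, or a star set for $-3$ that is not a union of valency classes) and actually carry out the resulting search.
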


The technique of the proof of this result is similar to the techniques used in the result above. 
We therefore merely give a sketch of the proof.

\begin{proof}[Sketch of proof]
	Assume there exists a graph $\Gamma$ with the assumed valency-array and spectrum.
	Van Dam et al.~\cite{Dam:2015qv} determined much of the structure of $\Gamma$.
	In particular, it was shown that the valency partition of such a graph $\Gamma$ would be equitable with quotient matrix
	\[
	\begin{pmatrix}
	0 & 30 & 0 & 15\\
	1 & 25 & 3 & 5\\
	0 & 18 & 0 & 0\\
	1 & 10 & 0 & 2
	\end{pmatrix}.
	\]
	Starting with the valency-$2$ subgraph induced on $V_4$, one can apply similar techniques as given in the proofs above to determine the nonexistence of $\Gamma$.
\end{proof}


\section{Open Problems} 
\label{sec:open_problems}

Here we pose some open problems including ones that we have stated throughout the course of this article.

\begin{enumerate}[(I)]
	\item Does there exist a cone with valency-array $(1,234,54; 288,211,43)$ and spectrum $\{ [204]^1, [6]^{127}, [-6]^{161} \}$? (See Bridges and Mena \cite[Theorem 2.2]{brme}.)
	
	\item Classify the graphs in $\mathcal G(\theta_0,\theta_1,\theta_2)$ with $\dim(W(\Gamma)) = 9$.
	(See Remark~\ref{rem:WL}.)
	
	\item Is it true that, for any positive integer $C$, there exists a graph $\Gamma \in \mathcal G(\theta_0, \theta_1, \theta_2)$ with $\dim(W(\Gamma)) \geqslant C$? 
	
	\item Does there exist a pair of graphs having three eigenvalues such that both graphs have the same spectrum but different valency-arrays?
	(See Remark~\ref{rem:pairVA}.)
	
	\item Do there exist only finitely many connected biregular graphs with three distinct eigenvalues and bounded smallest eigenvalue? (See Theorem~\ref{thm:boundsec}.)
	
	\item Does the converse of Theorem~\ref{thm:2-des} hold?  
	
	\item Do there exist any graphs satisfying Theorem~\ref{thm:complement} or Proposition~\ref{pro:switch}?
	
	\item Do there exist anymore graphs with valency-array $(15,15; 14, 8)$ and spectrum $\{ [12]^1, [2]^{15}, [-3]^{14} \}$?
	(See Theorem~\ref{thm:30}.)
	
	\item Let $\Gamma$ be a biregular graph in $\mathcal G(\theta_0,\theta_1,\theta_2)$ with $\theta_1+\theta_2 = -1$.
	Suppose that $\Gamma$ is not the Petersen cone.
	Does it follow that $n_1 = n_2$?
	(See Lemma~\ref{lem:n1n2}.)

	\item De Caen~\cite[Problem 9]{Dam05} (also see \cite{Dam:2015qv}) asked whether graphs with three distinct eigenvalues have at most three distinct valencies.
	Is it possible to bound the number of distinct valencies of a graph with three distinct eigenvalues?
	
	\item Can one construct an infinite family of connected graphs having three distinct valencies and three distinct eigenvalues?
	(So far there are only finitely many known examples of such graphs.)
\end{enumerate}



\section{Acknowledgements } 
\label{sec:acknowledgements}
	The authors would like to thank the referees for their comments, which greatly helped improve the structure of the paper.

\appendix

\section{Feasible valency-arrays and spectra} 
\label{sec:feasible_parameters}

In this section we give a table of feasible valency-arrays and spectra for biregular graphs having precisely three eigenvalues and at most $100$ vertices.
Since we have a complete understanding in these cases, we omit valency-arrays and spectra that correspond to complete bipartite graphs (second largest eigenvalue equal to $0$) and valencies and spectra with second largest eigenvalue equal to $1$ that correspond to nonexistent graphs.
Finally we omit spectra that do not satisfy Theorem~\ref{thm:bell}.

In the comment column of Table~\ref{tab:fea}, we give some information about some of the valency-arrays and spectra.
If a graph corresponding to the valency-array and spectra exists, we try to give some indication of how it can be constructed.
If no such graph exists then we give a reference to a proof of its nonexistence.
Otherwise, if the existence of a graph is unknown, we may refer to a design related to the parameters such as a group divisible design (see Remark~\ref{rem:2-des}).

Table~\ref{tab:fea} was generated using \texttt{Magma}~\cite{Magma} and the associated code is available on request.

\setlength{\tabcolsep}{3pt}

\begin{center}
\begin{longtable}{lcllllllll}
	
\caption[Feasible valency-arrays and spectra]{Feasible valency-arrays and spectra for biregular graphs with three eigenvalues} \label{tab:fea} \\

\hline v & Spectrum & $n_1$ & $n_2$ & $k_1$ & $k_2$ & $k_{12}$ & Existence & Comment \\ \hline
\endfirsthead

\multicolumn{9}{c}%
{{\bfseries \tablename\ \thetable{} -- continued from previous page}} \\
\hline v & Spectrum & $n_1$ & $n_2$ & $k_1$ & $k_2$ & $k_{12}$ & Existence & Comment \\ \hline
\endhead

\hline \multicolumn{9}{r}{{Continued on next page}} \\ \hline
\endfoot

\hline \hline
\endlastfoot											

  11 & ${ [5]^1, [1]^{5}, [-2]^{5} }$     & $1$  & $10$ & $10$ & $4$  &  $10$ & 1 & Petersen cone \\
 14 & ${ [8]^1, [1]^{6}, [-2]^{7} }$      & $7$  & $7$  & $10$ & $4$  &  $4$  & 1 & Van Dam-Fano graph\\
 17 & ${ [8]^1, [2]^{6}, [-2]^{10} }$     & $1$  & $16$ & $16$ & $7$  &  $16$ & 2 & $\SRG(16,6,2,2)$ cone \\
 22 & ${ [14]^1, [2]^{7}, [-2]^{14} }$    & $14$ & $8$  & $16$ & $7$  &  $4$ & 1 & \cite{brme} \\
 29 & ${ [14]^1, [4]^{7}, [-2]^{21} }$    & $1$  & $28$ & $28$ & $13$ &  $28$ & 4 & $\SRG(28,12,6,4)$ cone\\
 30 & ${ [13]^1, [3]^{9}, [-2]^{20} }$    & $15$ & $15$ & $15$ & $7$  &  $3$ & 0 & \cite[Theorem 7]{Dam3ev} \\
 30 & ${ [18]^1, [3]^{8}, [-2]^{21} }$    & $15$ & $15$ & $22$ & $10$ &  $8$ & 0 & \cite[Theorem 7]{Dam3ev} \\
 30 & ${ [12]^1, [2]^{15}, [-3]^{14} }$   & $15$ & $15$ & $14$ & $8$  &  $4$ & $\geqslant 21$ & Theorem~\ref{thm:30} \\
 32 & ${ [14]^1, [4]^{8}, [-2]^{23} }$    & $16$ & $16$ & $16$ & $10$ &  $4$ & 0 & \cite[Theorem 7]{Dam3ev} \\
 36 & ${ [21]^1, [5]^{7}, [-2]^{28} }$    & $8$  & $28$ & $28$ & $18$ &  $21$ & 1 & \cite[Theorem 7]{Dam3ev} \\
 39 & ${ [14]^1, [2]^{23}, [-4]^{15} }$   & $12$ & $27$ & $17$ & $12$ &  $9$ & $\geqslant 120$ & Proposition~\ref{pro:biswitch}, \cite{Dam3ev} \\
 39 & ${ [20]^1, [2]^{22}, [-4]^{16} }$   & $12$ & $27$ & $26$ & $16$ &  $18$ & $\geqslant 120$ & Proposition~\ref{pro:biswitch}, \cite{Dam3ev} \\
 44 & ${ [22]^1, [6]^{8}, [-2]^{35} }$    & $28$ & $16$ & $24$ & $15$ &  $4$ & 0 & \cite[Theorem 7]{Dam3ev} \\
 44 & ${ [19]^1, [2]^{22}, [-3]^{21} }$   & $22$ & $22$ & $22$ & $7$  &  $4$ & 0 & Theorem~\ref{thm:44} \\
 45 & ${ [32]^1, [5]^{8}, [-2]^{36} }$    & $36$ & $9$  & $34$ & $16$ &  $4$ & 0 & \cite[Theorem 7]{Dam3ev} \\
 45 & ${ [20]^1, [2]^{26}, [-4]^{18} }$   & $9$  & $36$ & $32$ & $14$ &  $24$ & $\geqslant 9$ & Proposition~\ref{pro:switch}, \cite{Dam3ev} \\
 45 & ${ [14]^1, [2]^{27}, [-4]^{17} }$   & $9$  & $36$ & $20$ & $11$ &  $12$ & $\geqslant 9$ & Proposition~\ref{pro:switch}, \cite{Dam3ev}\\
 46 & ${ [15]^1, [3]^{20}, [-3]^{25} }$   & $1$  & $45$ & $45$ & $13$ &  $45$ & $78$ & $\SRG(45,12,3,3)$ cone \\
 48 & ${ [27]^1, [3]^{19}, [-3]^{28} }$   & $24$ & $24$ & $33$ & $15$ &  $12$ & 0 & Theorem~\ref{thm:48} \\
 48 & ${ [26]^1, [2]^{27}, [-4]^{20} }$   & $24$ & $24$ & $32$ & $14$ &  $12$ & 0 & Theorem~\ref{thm:48} \\
 50 & ${ [27]^1, [2]^{24}, [-3]^{25} }$   & $25$ & $25$ & $33$ & $9$  &  $9$ & 78 & Corollary~\ref{cor:2-des} \\
 54 & ${ [34]^1, [6]^{9}, [-2]^{44} }$    & $30$ & $24$ & $40$ & $19$ &  $12$ & 0 & \cite[Theorem 7]{Dam3ev} \\
 56 & ${ [21]^1, [3]^{24}, [-3]^{31} }$   & $20$ & $36$ & $27$ & $11$ &  $9$ & ? & $\GDD(20,36;9,5;0,2;10,2)$ \\
 57 & ${ [21]^1, [4]^{21}, [-3]^{35} }$   & $15$ & $42$ & $28$ & $16$ &  $14$ & ? & $(15,5,4)$-design\\
 57 & ${ [14]^1, [2]^{35}, [-4]^{21} }$   & $1$  & $56$ & $56$ & $11$ &  $56$ & 1 & $\SRG(56,10,0,2)$ cone\\
 66 & ${ [39]^1, [3]^{26}, [-3]^{39} }$   & $39$ & $27$ & $45$ & $13$ &  $9$ & 68 & Remark~\ref{rem:66} \\ 
 66 & ${ [33]^1, [6]^{18}, [-3]^{47} }$   & $54$ & $12$ & $34$ & $27$ &  $4$ & ? & \\
 66 & ${ [36]^1, [3]^{32}, [-4]^{33} }$   & $33$ & $33$ & $44$ & $20$ &  $16$ & ? & \\
 68 & ${ [34]^1, [2]^{46}, [-6]^{21} }$   & $48$ & $20$ & $37$ & $16$ &  $5$ & ? & \\
 69 & ${ [33]^1, [6]^{19}, [-3]^{49} }$   & $42$ & $27$ & $36$ & $26$ &  $9$ & ? & \\
 69 & ${ [24]^1, [2]^{48}, [-6]^{20} }$   & $21$ & $48$ & $32$ & $17$ &  $16$ & ? & \\
 70 & ${ [23]^1, [5]^{23}, [-3]^{46} }$   & $1$  & $69$ & $69$ & $21$ &  $69$ & ? & $\SRG(69,20,7,5)$ cone\\
 70 & ${ [25]^1, [3]^{40}, [-5]^{29} }$   & $10$ & $60$ & $33$ & $23$ &  $24$ & ? & $(10,4,8)$-design\\
 70 & ${ [30]^1, [2]^{48}, [-6]^{21} }$   & $14$ & $56$ & $48$ & $21$ &  $36$ & ? & $\GDD(14,56;36,9;24,22;7,2)$\\
 70 & ${ [22]^1, [2]^{49}, [-6]^{20} }$   & $14$ & $56$ & $32$ & $17$ &  $20$ & ? & \\
 74 & ${ [33]^1, [3]^{37}, [-4]^{36} }$   & $37$ & $37$ & $39$ & $15$ &  $9$ & ? & \\
 74 & ${ [38]^1, [2]^{50}, [-6]^{23} }$   & $34$ & $40$ & $48$ & $21$ &  $20$ & ? & \\
 78 & ${ [33]^1, [3]^{44}, [-5]^{33} }$   & $12$ & $66$ & $55$ & $25$ &  $44$ & ? & \\
 80 & ${ [47]^1, [7]^{19}, [-3]^{60} }$   & $40$ & $40$ & $53$ & $39$ &  $24$ & ? & \\
 80 & ${ [27]^1, [3]^{46}, [-5]^{33} }$   & $16$ & $64$ & $39$ & $21$ &  $24$ & $\geqslant 1$ & Proposition~\ref{pro:switch}, \cite{Dam3ev} \\
 80 & ${ [35]^1, [3]^{45}, [-5]^{34} }$   & $16$ & $64$ & $55$ & $25$ &  $40$ & $\geqslant 1$ & Proposition~\ref{pro:switch}, \cite{Dam3ev} \\
 80 & ${ [42]^1, [2]^{59}, [-8]^{20} }$   & $40$ & $40$ & $48$ & $34$ &  $24$ & ? & \\
 81 & ${ [33]^1, [6]^{23}, [-3]^{57} }$   & $27$ & $54$ & $42$ & $24$ &  $18$ & ? & \\
 81 & ${ [29]^1, [2]^{59}, [-7]^{21} }$   & $27$ & $54$ & $38$ & $20$ &  $18$ & ? & \\
 82 & ${ [27]^1, [6]^{24}, [-3]^{57} }$   & $1$  & $81$ & $81$ & $25$ &  $81$ & $\geqslant 1$ & $\SRG(81,24,9,6)$ cone\\
 84 & ${ [49]^1, [7]^{20}, [-3]^{63} }$   & $42$ & $42$ & $57$ & $37$ &  $24$ & ? & \\
 84 & ${ [44]^1, [4]^{36}, [-4]^{47} }$   & $36$ & $48$ & $56$ & $26$ &  $24$ & ? & \\
 84 & ${ [39]^1, [3]^{51}, [-6]^{32} }$   & $54$ & $30$ & $43$ & $27$ &  $10$ & ? & \\
 84 & ${ [44]^1, [2]^{62}, [-8]^{21} }$   & $42$ & $42$ & $52$ & $32$ &  $24$ & ? & \\
 85 & ${ [32]^1, [2]^{64}, [-8]^{20} }$   & $5$  & $80$ & $64$ & $28$ &  $64$ & 0 & $(5,4,48)$-design\\
 86 & ${ [51]^1, [3]^{51}, [-6]^{34} }$   & $68$ & $18$ & $54$ & $34$ &  $9$ & ? & $(18,9,16)$-design\\
 96 & ${ [44]^1, [4]^{42}, [-4]^{53} }$   & $48$ & $48$ & $52$ & $20$ &  $12$ & ? & \\
 96 & ${ [43]^1, [3]^{54}, [-5]^{41} }$   & $48$ & $48$ & $51$ & $19$ &  $12$ & ? & \\
 97 & ${ [24]^1, [4]^{45}, [-4]^{51} }$   & $1$  & $96$ & $96$ & $21$ &  $96$ & $\geqslant 1$ & $\SRG(96,20,4,4)$ cone \\
 98 & ${ [44]^1, [4]^{49}, [-5]^{48} }$   & $49$ & $49$ & $52$ & $28$ &  $16$ & ? &  \\
 99 & ${ [44]^1, [2]^{78}, [-10]^{20} }$  & $72$ & $27$ & $47$ & $32$ &  $9$ & ? & \\  
 100 & ${ [57]^1, [7]^{24}, [-3]^{75} }$  & $50$ & $50$ & $69$ & $33$ &  $24$ & 0 & Theorem~\ref{thm:100}\\
 100 & ${ [52]^1, [2]^{74}, [-8]^{25} }$  & $50$ & $50$ & $64$ & $28$ &  $24$ & 0 & Theorem~\ref{thm:100} \\				
                                                                                  
 \end{longtable}
 \end{center}

\bibliographystyle{myplain}
\bibliography{sbib}

\end{document}